\g@addto@macro\normalsize{%
	\setlength\abovedisplayskip{8pt plus 3pt minus 3pt}
	\setlength\belowdisplayskip{8pt plus 3pt minus 3pt}
	\setlength\abovedisplayshortskip{6pt plus 3pt minus 2pt}
	\setlength\belowdisplayshortskip{6pt plus 3pt minus 2pt}
}
\date{\today}
\numberwithin{equation}{section}
\def\({\bigl(}
\def\){\bigr)}
\newtheorem{thm}{Theorem}[section]
\newtheorem{cor}[thm]{Corollary}
\newtheorem{lemma}[thm]{Lemma}
\newtheorem{conj}[thm]{Conjecture}
\theoremstyle{definition} 
\newtheorem{remark}[thm]{Remark}
\def\dfrac#1#2{\lower0.15ex\hbox{\large$\textstyle\frac{#1}{#2}$}}
\def\({\bigl(}
\def\){\bigr)}
\def\st{\,:\,}
\def\mid{\mathrel{|}}
\let\eps=\varepsilon
\def\calS{\mathcal{S}}
\def\calM{\mathcal{M}}
\def\calL{\mathcal{L}}
\def\calP{\mathcal{P}}
\def\1{\mathbbm{1}}
\def\X{\boldsymbol{X}}
\def\Y{\boldsymbol{Y}}
\def\Z{\boldsymbol{Z}}
\def\x{\boldsymbol{x}}
\def\n{\boldsymbol{n}}
\def\calF{\mathcal{F}}
\def\calG{\mathcal{G}}
\def\calC{\mathcal{C}}
\def\calS{\mathcal{S}}
\def\xvec{\boldsymbol{x}}
\def\yvec{\boldsymbol{y}}
\def\zvec{\boldsymbol{z}}
\def\uvec{\boldsymbol{u}}
\def\avec{\boldsymbol{a}}
\def\alphavec{\boldsymbol{\alpha}}
\def\one{\boldsymbol{1}}
\def\E{\operatorname{\mathbb{E}}}
\def\Pr{\mathbb{P}}
\def\Reals{{\mathbb{R}}}
\def\pReals{\Reals_+}
\def\Naturals{{\mathbb{N}}}
\def\esssup{\operatorname{ess\,sup}\limits}
\def\nb{k} 
\def \M{\mathcal{G}} 
\def \SBM{\M(\n,P)} 
\def \G{\boldsymbol{G}} 
\begin{document}

	\title{On the chromatic number of graphons}
	\author{
		Mikhail Isaev\thanks{Supported by Australian Research Council Discovery Project DP190100977
			and by Australian Research Council Discovery Early Career Researcher Award DE200101045}\\
		\small School of Mathematics\\[-0.8ex]
		\small Monash University\\[-0.8ex]
		\small 3800  Clayton,   Australia\\[-0.8ex]
		\small\tt  mikhail.isaev@monash.edu    
		\and
		Mihyun~Kang\thanks{Research supported in part by FWF I3747. Part of this work was done while the author was visiting the Simons Institute for the Theory of Computing.}\\
		\small Institute of Discrete Mathematics\\[-0.8ex]
		\small Graz University of Technology\\[-0.8ex]
		\small  8010  Graz, Austria\\[-0.8ex]
		\small\tt  kang@math.tugraz.at
	}
	\date{\today}


	\maketitle

	\begin{abstract}
		We extend  Bollobas' classical result on  the chromatic number of a binomial random graph  to  the exchangeable random graph model $\calG(n,W)$ defined by a graphon $W:[0,1]^2 \rightarrow [0,1]$, which is a symmetric measurable function.
		In the case when $W$ can be approximated by  block graphons in $\calL^{\infty}$-norm, we show that asymptotically optimal value  of the number of colours required for $\calG(n,W)$  is  determined by colouring strategies that use a finite number of different types of colour classes.   Furthermore, if $W$ is a block graphon with $k\times k$ blocks then $k$ types of colour classes are sufficient. We also show that if $W$ is   block-increasing or  block-Lipschitz  then such colouring strategies that use  $k$ types  determine the chromatic number up to a multiplicative error of order $O(k^{-1})$. 
%
%
	\end{abstract}

	\section{Introduction}\label{S:intro}

	
	The chromatic number of a graph is a fundamental parameter of a graph. Given a graph $G$, its chromatic number $\chi(G)$ is the smallest number of colours needed for the assignment of colours to the vertices of $G$ so that no two adjacent vertices have the same colour. 
	
	In their seminal paper \cite{ER60}, Erd\H{o}s and R\'enyi raised research questions/problems leading to the theory of random graphs. One of their problems that have attracted constant attention concerns the chromatic number of a random graph. The asymptotic behaviour and concentration of the chromatic number  of various random graph models was thoroughly investigated by many researchers: see, for example,~\cite{AchlioptasMoore,AchlioptasNaor2005, AlonKrivelevich1997, Bollobas1988, Bollobas2004, CPA2008, CooperFriezeReedRiordan, FriezeKrivelevichSmyth, FriezeLuczak, GaoOhapkin,  Heckel2018, Heckel2020, IsaevKang-SBM, KemkesGimenezWormald, KrivelevichSudakov1998, Luczak1991, Luczak1991b, MPSM2020, McDiarmid1990, McDiarmid_geometric, McDiarmidMueller_geometric, Penrose_geometric, Panagiotou2009, scott2008, ShamirSpencer1987, SW2022} and references therein.

	One of the most important results along this line of research is Bollobas' result~\cite{Bollobas1988} on the typical value of the chromatic number in  the binomial random graph model $\calG(n,p)$. Let   $p\in (0,1)$ be fixed and set $b=\frac{1}{1-p}$. Then, with high probability ({\em whp} for short, meaning with probability tending to one as $n\to \infty$),
	\begin{equation}\label{Bollobas-original}
		\chi(\G ) = (1+o(1)) \frac{n}{2 \log_{b} n}, \qquad \text{where $\G \sim \calG(n,p)$.}
	\end{equation}
	This result was strengthened and extended, in particular, by McDiarmid~\cite{McDiarmid1990}, by {\L}uczak~\cite{Luczak1991}, by Scott~\cite{scott2008}, by Panagiotou and Steger~\cite{Panagiotou2009}, and by Heckel~\cite{Heckel2018}. As for the concentration of the chromatic number, starting from the classical result by Shamir and Spencer~\cite{ShamirSpencer1987}, there have been breakthough results for various ranges of $p=p(n)$ by  {\L}uczak~\cite{Luczak1991b}, by Alon and Krivelevich~\cite{AlonKrivelevich1997}, and by Achlioptas and Naor~\cite{AchlioptasNaor2005},  to mention a few.

	Some of these results are extended to  random regular graphs; see, for example, the papers by Achlioptas and  Moore~\cite{AchlioptasMoore}, by Cooper, Frieze, Reed and Riordan~\cite{CooperFriezeReedRiordan}, by Frieze and {\L}uczak~\cite{FriezeLuczak}, and by Kemkes, P\'erez-Gim\'en\'ez and Wormald~\cite{KemkesGimenezWormald}. A uniform random $d$-regular graph model $\calG(n,d)$ is a typical example of {\em homogeneous} random graph models that is closely related to  $\calG(n,p)$. The intense study on $\calG(n,p)$  and $\calG(n,d)$ revealed that these two models share various properties, especially in the regime $d\gg \log n$, including the asymptotic behaviour of the chromatic number. 
%
%
	
	The chromatic number remains to be a central topic also in {\em inhomogeneous} random graph models, including random graphs with specified degrees, random geometric graphs, and the stochastic block model.  In contrast to {\em homogeneous} random graph models such as $\calG(n,p)$ and $\calG(n,d)$, in which we can colour almost all vertices asymptotically optimally by a {\em"greedy" colouring} that  assigns distinct colours to independent sets of  asymptotically same asymptotic size $(2+o(1)) \log_{b} n$,  the "greedy" colouring in inhomogeneous models typically produces sets of different sizes and involves vertices of several different types.  The resulting number of colours is often  bigger than in the \emph{"balanced" strategy}, where we enforce all independent sets to be similar.   
	Furthermore, both strategies might be far from optimal colourings for which   inhomogeneities of the model play much more  intricate role.

	The chromatic number of random graphs with given degrees is studied by Frieze,  Krivelevich and Smyth~\cite{FriezeKrivelevichSmyth} and very recently by Gao and Ohapkin~\cite{GaoOhapkin}. In case of random geometric graphs, the study on its chromatic number was initiated by McDiarmid~\cite{McDiarmid_geometric} and by Penrose~\cite{Penrose_geometric}, whose results were sharpened and extended by  McDiarmid and Müller~\cite{McDiarmidMueller_geometric}. Another classic inhomogeneous random graph model is the stochastic block model, a random graph with planted clusters or with community structure. In their recent work, Martinsson, Panagiotou, Su, and Truji\'c~\cite{MPSM2020} determined a typical value of the chromatic number in  the \emph{stochastic block model}. Isaev and Kang~\cite{IsaevKang-SBM} extended their work to allow the number of blocks to grow and the edge probabilities tend to zero, and they also determined the chromatic number in the Chung-Lu model.

	Turning our attention to the limit objects of sequences of dense graphs or to analytic objects that approximate very large networks, we consider graphons that are  generalisations of graphs. 
	The most important applications of graphons  include the limit theory of dense graphs by Lov\'{a}sz et al~\cite{BorgsChayesLovasz10,BCLSV06,LovaszSzegedy06,LovaszSzegedy2007,Lovasz2012}), 
	large deviation principles for random graphs by Chatterjee and Varadhan~\cite{ChatterjeeVaradhan11},  as well as property testing by Lov\'{a}sz and Szegedy~\cite{LovaszSzegedy10ijm}. Graphons are closedly related to exchangeable arrays and infinite exchangeable graphs studied by Hoover~\cite{Hoover79} and by Aldous ~\cite{Aldous81} as well as by Diaconis and Janson~\cite{DiaconisJanson08}.

	Formally, 	a \emph{graphon} is a symmetric measurable function $W: [0,1]^2 \rightarrow [0,1]$. 
	Given  $n \in \Naturals$ and a graphon $W$,  the exchangeable random graph model 
	$ \M(n,  W)$  is defined as follows.    
	First,  we  sample $X_1,\ldots,X_n\in [0,1]$    independently uniformly at random.
	A random graph from $\M(n,W)$
	has vertex set $[n]:= \{1,\ldots, n\}$ and, for all $1\leq i \neq  j\leq n$,  edges $ij$ appear independently      with probabilities $W(X_i,X_j)$ conditional to the given $X_1,\ldots,X_n$. The simplest and  most studied case is when $W$ is identical to $p$,  where $\M(n,W)$ is equivalent to  $\M(n,p)$.
	When we say the chromatic number of a graphon $W$, we implicitly mean  the chromatic number of the  exchangeable random graph constructed this way.
	
	The main focus of this paper is  asymptotics of $\chi(\G)$, where $\G \sim \M(n,W)$.
	Despite  rich literature on the study of the behaviour of the chromatic number in  various random graph models,  only a few results on colouring  non-constant graphons   are available in the literature.
	Bhattacharya, Diaconis and Mukherjee \cite{BDM2017}  characterised    the limiting distribution of the
	number of monochromatic edges in random colourings for all converging sequences of dense graphs.
	Hladk\'y and Rocha~\cite[Theorem~1.4]{Hladky2020} proved that the chromatic number of a graphon  is not upper continuous but is lower 	semicontinuous.

	Martinsson,  Panagiotou, Su, and  Truji\'c~\cite{MPSM2020} made a conjecture equivalent to  that an asymptotically optimal number of colours for $\G\sim \M(n,W)$ is achieved by taking infimum over  the  following class of colouring strategies; see Conjecture \ref{Con:main} and Remark~\ref{Rem:Martinsson}.   Given a representation 	$\sum_{i \in [k]} \alpha_k \mu_k$ of uniform measure as a finite convex combination of  probability measures on $[0,1]$, we can sample  $X_1,\ldots, X_n$, by first sampling a type from $[k]$ proportionally to $\alpha_k$ and then sampling the point  according to $\mu_k$.  This splits the exchangeable random graph $\G$   into $k$ graphs $\G_1,\ldots, \G_k$ according to the chosen type. Then, for each $i \in [k]$,  we colour $\G_i$ in a "balanced" way by using largest independent sets \emph{aligned} with the measure $\mu_i$.
In this paper we confirm  the existence of an asymptotically optimal strategy within this class   for  graphons that can be approximated by block graphons in $\mathcal{L}^\infty$-norm and for block-increasing  graphons.

	We discuss in detail our main results and the key  ideas in Section \ref{S:dense}, while most of the proofs are given in later sections.
	In Section \ref{S:SBM}, we determine the chromatic number for block graphons, using the recent result \cite[Theorem 2.1]{IsaevKang-SBM} on the chromatic number in the stochastic block model. Finally, we consider approximations of general graphons with block graphons  in Section \ref{S:general}.
	


	
	\section{Our results and the key ideas}\label{S:dense}

	Throughout the paper,  we use the standard Landau notations for asymptotic orders and
	all asymptotics are taken as $n\rightarrow \infty$ (unless it is specified otherwise).  
	We say that an event holds with high probability ({\em whp} for short)
	if the probability that it holds tends to 1 as $n\rightarrow \infty$. 
	The statements about asymptotics like   ``$\Y(n) = (1+o(1)) \Z(n)$  holds whp as $n \rightarrow \infty$''   should be interpreted as follows:  for any fixed
	$\epsilon>0$,   the event that 
	$ (1-\epsilon)\Z(n)\leq \Y(n) \leq (1+\epsilon)\Z(n)$   holds whp. 
	We use $\log$ to denote the natural logarithm.

	\subsection{"Balanced" colourings}\label{S:c_balanced}
	
	As motivations for why certain quantities play important roles in the chromatic number of graphons, we begin with some heuristics. The starting point   is the well-known result by Bollob\'as on the typical value of the chromatic number for dense case: Rewriting \eqref{Bollobas-original} using the natural logarithm, we have that for a constant $p\in(0,1)$, whp 
	\begin{equation}\label{Bollobas-natural-log}
		\chi(\G(n,p)) = (1+o(1))   \dfrac{n  \log  \dfrac{1}{1-p}  }{2 \log n}.
	\end{equation}
	One of the main ingredients to prove this result is the concentration result on the size of a largest independent set of $\G(n,p)$: whp 
	\begin{equation}\label{Bollobas-indep}
		\alpha(\G(n,p)) = (1+o(1)) \dfrac{2 \log n}{\log   \frac{1}{1-p} }.
	\end{equation}
	
	Inspired by the classic colouring strategy for $\G(n,p)$, we provide heuristics to estimate the number of colours required to properly colour  $\G \sim  \M(n,W)$ using  \emph{"balanced"} colour classes. 
	Informally speaking,  a  "balanced" set means that its empirical measure approximates the uniform measure. 
	Our naive attempt  relies on the first moment calculations for  the size of largest "balanced" independent set in $\G$.   
	
	To be more precise, let  $\lambda$ denote the uniform measure on $[0,1]$, that is, for all 
	Lebesgue measurable sets $S\subseteq [0,1]$, 
	\[
	\lambda(S):= \int_{S} 1 \ dx.
	\]
	Let $X_1,\ldots, X_n$ be the points sampled independently uniformly from $[0,1]$ for the construction of $\G \sim \M(n,W)$.
	A "balanced" set $T \subseteq[n]$ means that the empirical   measure defined by the points  $X_i$  with  $i \in T$  is close to the uniform measure:
	\[
	\lambda(S)\approx \hat{\mu}\left[\left(X_i\right)_{i \in T}\right] (S)
	:=\dfrac{| \{i \in T \st X_i \in S\}|}{|S|}
	\]
	for any measurable $S\subseteq [0,1]$.
	

	For any $T \subseteq [n]$ and given $(X_i)_{i\in T}$, the probability that $T$ is   independent   in $\G$ is given by
	\begin{align*}
		\Pr (\G[T] \text{ has no edges} \mid (X_i)_{i\in T}) = 
		\prod_{i,j\in T \st i<j}  (1-W(X_i, X_j)).
	\end{align*} 
	If $T$ is "balanced" then 
	\[
	\dfrac{1}{|T|^2}\sum_{i,j\in T \st i<j} \log (1-W(X_i, X_j)) \approx  \dfrac12 
	\int_{0}^1\int_0^1\log \left(1-W(x,y)\right)  dx \, dy.
	\]
	Note also that  a set $T$ chosen uniformly at random  from   all sets of size $t$ is typically "balanced". Thus, if $Y_t$ is the number of independent "balanced" sets of size $t$ in $\G$ then, we get,
	\[
	\E[Y_t] \approx \binom{n}{t} \exp\left(\frac{t^2}{2} \int_{0}^1\int_0^1\log \left(1-W(x,y)\right)  dx\,  dy\right).
	\]
	The expectation threshold $t^*$ can be obtained by solving $\E[Y_t]  =1$, which gives
	\[
	t^*\approx  \frac{2\log n }{ \int_{0}^1\int_0^1\log \left(\frac{1}{1-W(x,y)}\right)  dx \,dy }.
	\]

	However, the expectation threshold $t^*$  for $Y_t$ can be  significantly above the {\em existence} threshold due to the fact that our random graph model is inhomogeneous: a very dense part of $\G$ can further 
	restrict the size of a "balanced" independent set. Therefore, we need a refined version of the expectation threshold.  Note that for any $S \subseteq[0,1]$ of positive measure,  there  are approximately $\lambda(S) n$ points $X_i \in S$ and any "balanced"   set $T$ contains approximately $\lambda(S)|T|$ of such $i$. 
	Similarly to  $\E[Y_t]$, the expected number of  ways to choose $\lambda(S)t$   vertices  
	from $\lambda(S)n$ vertices corresponding $X_i\in S$ such that they form an independent set is  approximately 
	\[
	\binom{\lambda(S)n}{\lambda(S) t} 
	\exp\left(\dfrac{(\lambda(S) t)^2}{2} \int_S\int_S\log \left(1-W(x,y)\right)  dx \,dy\right).
	\]
	Then, by setting this quantity to be equal to one, we  obtain the following refined version of the expectation  threshold:  
	\[
	t_S^* \approx  \frac{2\log n }{ \frac{1}{\lambda(S)}\int_S\int_S\log \left(\frac{1}{1-W(x,y)}\right)  dx\, dy}.
	\]

	From the expression for $t_S^*$ one can see that the strongest restriction on the size of a "balanced" independent set comes from a set $S \subseteq [0,1]$ that maximises $\frac{1}{\lambda(S)}\int_S\int_S\log \left(\frac{1}{1-W(x,y)}\right)  dx\, dy$.   This motivates the following definition.
	For a graphon $W: [0,1]^2 \rightarrow [0,1)$, such that 
	$\log \left(\dfrac{1}{1-W(x,y)}\right)$ is integrable on $[0,1]^2$, let 
	\begin{equation} \label{def:var_uni} 
		\varphi(W) := \sup_{S\subseteq [0,1] \st \lambda(S)> 0} \   \dfrac{1}{\lambda(S)}\
		\int_{S } \int_S\log \left(\dfrac{1}{1-W(x,y)}\right)  dx \,dy,	
	\end{equation}
	where the supremum is taken over all measurable sets $S \subseteq [0,1]$ of  positive measure. 
	Returning to our heuristics,   the size of a largest "balanced" independent set is approximately $\frac{2\log n} {\varphi(W)}$
	(at least can not exceed this value). 
	Thus, informally, 
	$\varphi(W)$  corresponds to
	the number of colours required to properly colour  $\G \sim  \M(n,W)$ using  \emph{"balanced"} colour classes up to a  factor $\frac{n}{2\log n}$.

	Our first result shows that $  \varphi(W)\cdot \dfrac{n }{2\log n}$ is indeed an upper bound for the chromatic number of $\G\sim \M(n,W)$ for any    Riemann  integrable  graphon $W$.
	
	\begin{thm}\label{T:New1}
		Assume that $W$ is a Riemann  integrable graphon 
		and
		$\sup_{x,y \in [0,1]} W(x,y)<1$. 
		Then, whp as $n \rightarrow \infty$, 
		\begin{equation}\label{eq:balanced} 
			\chi(\G) \leq (\varphi(W)+o(1))\dfrac{n }{2\log n},  \quad \text{where}  \  \    \G \sim \M(n,W).
		\end{equation}
	\end{thm}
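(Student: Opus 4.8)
The plan is to construct an explicit colouring of $\G \sim \M(n,W)$ using roughly $(\varphi(W)+o(1))\,n/(2\log n)$ colours, by iterating a greedy extraction of large "balanced" independent sets. First I would fix $\eps>0$ and set $t := \lfloor (2-\eps)(\log n)/\varphi(W)\rfloor$ as the target independent-set size. The core combinatorial claim is that in $\G$, restricted to any subset of vertices of size at least $\eps n$ (say), there exists an independent set $T$ of size $t$ whose empirical measure $\hat\mu[(X_i)_{i\in T}]$ is close to $\lambda$ in an appropriate weak sense. Granting this, I would repeatedly pull out such sets, assigning each a fresh colour, until fewer than $\eps n$ vertices remain; those leftover vertices get $\eps n$ additional colours, one each. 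The total count is at most $n/t + \eps n = (1+o(1))\varphi(W)\,n/(2(2-\eps)^{-1}\log n) + \eps n$, and since $\eps>0$ is arbitrary and $\eps n = o(n/\log n)$ is \emph{not} true — so one must instead take the leftover reservoir to have size $n/\log^2 n$ or similar, colouring it greedily with $O(n/\log^2 n) = o(n/\log n)$ colours via the bounded-degree-type bound; I would make sure the reservoir is polynomially smaller than $n$ but still large enough for the existence step to apply.

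The heart of the argument is the \emph{existence of a large balanced independent set}, which I would prove by the second moment method. Reading off from the heuristics in the paper: for a "balanced" candidate set $T$ of size $t$, the probability it is independent, conditioned on $(X_i)_{i\in T}$, is $\exp\bigl(\tfrac12\sum_{i\ne j\in T}\log(1-W(X_i,X_j))\bigr)$, and by the definition \eqref{def:var_uni} of $\varphi(W)$ this exponent is $\ge -(\varphi(W)/2+o(1))t^2$ for \emph{balanced} $T$ — here is where $\varphi(W)$ enters: the supremum over $S$ in $\varphi(W)$ exactly bounds how much a dense region can hurt us, so any balanced $T$ cannot fall entirely into such a region in a damaging proportion. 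Combined with $\binom{n}{t}\approx n^t$, the expected number $Y$ of balanced independent $t$-sets is $\exp((1+o(1))\eps t\log n) \to \infty$. For the variance, I would bound $\E[Y^2]$ by summing over overlap sizes $\ell = |T\cap T'|$; the dominant contribution is from small $\ell$ and gives $\E[Y^2] = (1+o(1))\E[Y]^2$, so $Y>0$ whp (and in fact concentrated). One technical wrinkle: "balanced" must be defined quantitatively — I would use that a uniformly random $t$-subset of $[n]$ has, whp, empirical measure within $o(1)$ of $\lambda$ on every set from a fixed finite generating algebra (using Riemann integrability of $W$ to reduce to finitely many "blocks"), and restrict attention to such $t$-sets. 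Riemann integrability is also what lets me replace $W$ by a step function up to arbitrarily small $\L^1$-error, controlling the exponent uniformly.

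To make the iteration work I need the existence statement to hold \emph{whp simultaneously} across all the $\approx n/t$ rounds, even though later rounds operate on a graph that depends on earlier choices. I would handle this by the standard device of proving a stronger "robust" statement first: whp, \emph{every} induced subgraph $\G[U]$ with $|U|\ge \delta n$ contains a balanced independent $t$-set, where "balanced" is now measured relative to the empirical measure of $U$ itself — this is a single whp event (a union bound over $2^n$ sets $U$ against a superexponentially small failure probability $\exp(-\omega(n))$ coming from a concentration/martingale bound for $Y$, e.g. Janson-type or a Talagrand argument). With this robust lemma in hand the greedy extraction is deterministic given the graph. The main obstacle, and where I expect the real work, is the \emph{second moment / concentration estimate producing a superexponentially small failure probability} for the existence of a balanced independent set of the near-optimal size $t$ — getting the right dependence so that the union bound over subsets $U$ survives, while simultaneously tracking the "balanced" constraint through the variance computation. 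The supremum defining $\varphi(W)$ appears both as the obstruction (worst dense set $S$) and, more subtly, one must verify the sup is actually \emph{attained approximately} by sets compatible with the block structure so that the first-moment exponent is genuinely $-(\varphi(W)+o(1))t^2/2$ and not worse; this compactness-type point, together with the Riemann-integrability reduction to step functions, is the delicate part of the setup.
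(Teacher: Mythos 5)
Your approach---a direct Bollob\'as-style second-moment and martingale argument---is genuinely different from the paper's, which is a short reduction: approximate $W$ from above by a $k$-block graphon $W_k^+$ (taking suprema of $W$ over cells of a $1/k$-grid, with $k=k(n)$ growing slowly), couple $\G\subseteq \G_k^+\sim\calG(n,W_k^+)$, invoke the already-established chromatic-number result for sequences of block graphons (Theorem~\ref{T:sequences1}, which rests on the stochastic-block-model theorem imported from \cite{IsaevKang-SBM}), and conclude via the stability estimate of Theorem~\ref{T:norm}(a), using Riemann integrability to force $\|W-W_k^+\|_{\calL^2(\lambda\times\lambda)}\to 0$ and hence $\varphi(W_k^+)\to\varphi(W)$. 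The existence and concentration machinery you propose to re-derive is entirely packaged inside that imported theorem.

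There is also a genuine gap in your sketch. Your robust lemma---that whp \emph{every} induced subgraph $\G[U]$ with $|U|\geq\delta n$ contains an independent $t$-set balanced relative to $\hat{\mu}_U$, with $t=\lfloor(2-\eps)\log n/\varphi(W)\rfloor$---is false in general. Suppose the supremum in \eqref{def:var_uni} is attained (or nearly attained) by a set $S$ with $\lambda(S)$ bounded away from $1$; for instance $W\equiv p_1$ on $S\times S$ and $W\equiv p_2$ elsewhere with $p_1\gg p_2$ gives $\varphi(W)=\lambda(S)\log\frac{1}{1-p_1}$. Take $U$ to be the $\approx\lambda(S)n$ vertices $i$ with $X_i\in S$. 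Then $\G[U]\sim\calG(|U|,p_1)$ and its independence number---balanced or not---is whp only
\[
(1+o(1))\,\frac{2\log|U|}{\log\frac{1}{1-p_1}}
= (1+o(1))\,\frac{2\lambda(S)\log n}{\varphi(W)},
\]
a factor $\lambda(S)<1$ short of your $t$. So $\G[U]$ contains \emph{no} independent $t$-set at all, and the union-bound event you want cannot hold. The correct version can only quantify over $U$ that are themselves (approximately) $\lambda$-balanced, and you must then verify that the greedy extraction preserves $\lambda$-balance across the $\Theta(n/\log n)$ rounds with controlled accumulation of rounding errors. Carried out carefully, this forces a discretization of $[0,1]$ into finitely many cells and per-cell bookkeeping of vertex counts---which is precisely the stochastic-block-model framework of \cite{IsaevKang-SBM} that the paper imports wholesale rather than re-proving.
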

	We prove Theorem \ref{T:New1} in Section \ref{S:New1}. 
	In certain cases, the "balanced" strategy is optimal; see Theorem \ref{thm:balanced} stated later in this section.
	 However, for arbitrary graphons, it is naive to expect that the bound of \eqref{eq:balanced} is tight, so more advanced colouring strategy is required.

\subsection{Combinations of "balanced" colourings}\label{S:combi}
Our next idea is to extend the "balanced" colouring strategy to $\G \sim \M(n,W)$, by allowing colour classes of different types: one class might be smaller and contain  more vertices of high degrees, while another might be larger and consist mostly of vertices of small degrees. In general, a "type" is associated with a probability measure   representing the distribution of  sampled points from $[0,1]$ corresponding to  the colour class.


%

To be more formal, let $\calP(\varOmega)$ denote the set of probability measures on $\varOmega$. 
Let us represent  the uniform measure $\lambda$ as a finite convex combination  of  probability measures $\mu \in \calM \subseteq \calP([0,1])$:
\begin{equation}\label{def:convex}
\sum_{\mu \in \calM} \alpha_\mu    =1 \quad \text{ and } \quad   \sum_{\mu \in \calM} \alpha_\mu \mu = \lambda.
\end{equation}
Let  $\calC_{\lambda}$   denote  the set of all representations of  the uniform measure $\lambda$ 
as a finite convex combination of probability measures: 
\[
\calC_{\lambda}:= \Big\{(\calM, \alphavec) \st \text{$\calM  \subset \calP([0,1])$ is finite and  $\alphavec \in [0,1]^{|\calM|}$ satisfies \eqref{def:convex}} \Big\}.
\] 
Then, for any $(\calM,\alphavec) \in \calC_{\lambda}$, we can think of  $\G \sim \M(n,W)$ as a vertex-disjoint  union 
\begin{equation}\label{disj_union}
\G = \bigcup_{\mu \in \calM} \G_{\mu},
\end{equation}
where the random graphs $\G_{\mu}$ are constructed as follows.   For each $i \in [n]$, we first pick the measure $\mu$ with probability $\alpha_\mu$, then we sample $X_i$ according $\mu$.  
By our construction and  using \eqref{def:convex},
the points $X_1,\ldots, X_n$ are independent and distributed uniformly  on $[0,1]$. We define  $\G_{\mu}$   to be the  induced subgraph of $\G$ corresponding to the points sampled according $\mu$.

The idea is to apply the "balanced" colouring strategy from the previous section to colour each $\G_{\mu}$. But, in order to do so, we should incorporate the measure $\mu$ in the formula for required number of colours. This leads to the following quantity similar to \eqref{def:var_uni}:
\begin{equation}
\varphi(\mu,W) := \sup_{S\subseteq [0,1] \st \mu(S)> 0} \   \dfrac{1}{\mu(S)}\
\int_{S}\int_{S}	 \log \left(\dfrac{1}{1-W(x,y)}\right)  d\mu(x)d\mu(y),	 \label{def:var} 
\end{equation}
where the supremum is taken over all measurable sets $S \subseteq [0,1]$ with positive measure $\mu(S)$.    
In fact, one can show
\[
\G_\mu \sim \calG(n_{\mu}, W_{\mu}) \qquad  \text{and}\qquad
\qquad \varphi(\mu,W) = \varphi(W_{\mu}),
\] 
where $n_{\mu}$ is the (random) number of vertices from $[n]$ sampled according to $\mu$, and  $W_{\mu}$ is a graphon corresponding to $W$ after a certain  transformation   depending on $\mu$.
Using this idea, 
we derive the following result as a consequence of Theorem \ref{T:New1}.
\begin{thm}\label{C:main2}
Assume that $W$ is a Riemann  integrable graphon 
and $\sup_{x,y \in [0,1]} W(x,y)<1$.
Then, for any  $(\calM,\alpha) \in \calC_\lambda$  whp as $n \rightarrow \infty$, 
\begin{equation*} 
	\chi(\G) \leq  \Big(\sum_{\mu \in \calM} \alpha_{\mu} \varphi(\mu, W)  + o(1)
	\Big) \dfrac{n}{2\log n}, \quad \text{where}  \  \    \G \sim \M(n,W).
\end{equation*}
\end{thm}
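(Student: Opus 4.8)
The plan is to obtain Theorem~\ref{C:main2} from Theorem~\ref{T:New1} by colouring each graph in the decomposition \eqref{disj_union} separately. Fix $(\calM,\alpha)\in\calC_\lambda$; after discarding the measures with $\alpha_\mu=0$ we may assume $\alpha_\mu>0$ for every $\mu\in\calM$, and we set $k:=|\calM|$, a fixed finite number. The first, purely deterministic, step is the inequality $\chi(\G)\le\sum_{\mu\in\calM}\chi(\G_\mu)$: colour each induced subgraph $\G_\mu$ with a private palette of $\chi(\G_\mu)$ colours, the $k$ palettes being pairwise disjoint; then every edge of $\G$ is properly coloured, whether it lies inside one part (because $\G_\mu$ is an \emph{induced} subgraph and is properly coloured) or between two parts (because its two endpoints draw colours from disjoint palettes). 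So it suffices to bound each $\chi(\G_\mu)$ individually.

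Next I would make precise the reduction $\G_\mu\sim\calG(n_\mu,W_\mu)$ indicated in the excerpt. Conditioning on the random assignment of types to $[n]$, the number $n_\mu$ of type-$\mu$ vertices is fixed, their coordinates are i.i.d.\ with law $\mu$, and, after relabelling these vertices $1,\dots,n_\mu$ (which does not affect $\chi$), $\G_\mu$ is the exchangeable random graph on $n_\mu$ vertices built from $\mu$ and $W$. Let $q=q_\mu$ be the generalised inverse of the distribution function of $\mu$, so $q$ is non-decreasing and $q_*\lambda=\mu$; then $\G_\mu\sim\calG(n_\mu,W_\mu)$ with $W_\mu(u,v):=W(q(u),q(v))$, since sampling $X_i\sim\mu$ amounts to sampling $U_i$ uniformly and putting $X_i=q(U_i)$, whence $W(X_i,X_j)=W_\mu(U_i,U_j)$. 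The key point is that the constraint $\sum_{\mu\in\calM}\alpha_\mu\mu=\lambda$ from \eqref{def:convex} forces $\alpha_\mu\mu\le\lambda$, so each $\mu\in\calM$ is absolutely continuous with density at most $1/\alpha_\mu$ and in particular atomless. This has two consequences. First, $\mu\times\mu\ll\lambda\times\lambda$, so $(q\times q)^{-1}$ of the Lebesgue-null discontinuity set of $W$ is again Lebesgue-null; since a monotone $q$ has only countably many discontinuities, the discontinuity set of $W_\mu$ is Lebesgue-null, i.e.\ $W_\mu$ is Riemann integrable, and $\sup W_\mu\le\sup W<1$, so Theorem~\ref{T:New1} applies to $\calG(n_\mu,W_\mu)$. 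Second, being atomless, $q$ has no plateau, hence is strictly increasing and in particular injective, so the substitution $x=q(u)$, $y=q(v)$ gives, for every measurable $S\subseteq[0,1]$ with $\lambda(S)>0$ and $T:=q(S)$, both $\mu(T)=\lambda(S)$ and $\int_S\int_S\log\frac{1}{1-W_\mu(u,v)}\,du\,dv=\int_T\int_T\log\frac{1}{1-W(x,y)}\,d\mu(x)\,d\mu(y)$; running the same computation with $S=q^{-1}(T)$ and taking suprema yields $\varphi(W_\mu)=\varphi(\mu,W)$.

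It then remains to assemble the pieces with a routine concentration argument. Marginally $n_\mu\sim\mathrm{Bin}(n,\alpha_\mu)$, so a Chernoff bound and a union bound over the $k$ types give that whp $n_\mu=(1+o(1))\alpha_\mu n$ for all $\mu\in\calM$ simultaneously; in particular $n_\mu\to\infty$ and $\log n_\mu=(1+o(1))\log n$. Applying Theorem~\ref{T:New1} to $\G_\mu\sim\calG(n_\mu,W_\mu)$ — legitimate for the random parameter $n_\mu$ because $n_\mu\to\infty$ in probability and $W_\mu$ is a fixed graphon — gives, for each fixed $\epsilon>0$, that whp $\chi(\G_\mu)\le(\varphi(W_\mu)+\epsilon)\frac{n_\mu}{2\log n_\mu}\le(\varphi(\mu,W)+2\epsilon)\frac{\alpha_\mu n}{2\log n}$ once $n$ is large. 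Intersecting these $k$ whp-events and summing over $\mu\in\calM$ gives $\chi(\G)\le\big(\sum_{\mu\in\calM}\alpha_\mu\varphi(\mu,W)+2k\epsilon\big)\frac{n}{2\log n}$ whp, and since $\epsilon>0$ is arbitrary while $k$ is fixed, this is the assertion of Theorem~\ref{C:main2}.

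Modulo Theorem~\ref{T:New1} this is a genuine corollary, so there is no single hard step; the one place that needs care is the reduction in the second paragraph, and in particular the observation that membership of $(\calM,\alpha)$ in $\calC_\lambda$ automatically makes every $\mu$ absolutely continuous — this is exactly what preserves Riemann integrability under the quantile substitution (so that Theorem~\ref{T:New1} may legitimately be invoked on $W_\mu$) and what makes the substitution injective (so that one gets $\varphi(W_\mu)=\varphi(\mu,W)$, and not merely an inequality). The disjoint-palette bound and the transfer of a ``whp as $n\to\infty$'' conclusion to the random sample sizes $n_\mu$ are standard.
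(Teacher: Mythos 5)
Your proposal is correct and follows essentially the same route as the paper's proof of Theorem~\ref{C:main2}: decompose $\G = \bigcup_{\mu\in\calM}\G_\mu$, colour each part with a private palette, push each $\G_\mu$ onto an exchangeable random graph $\calG(n_\mu, W_\mu)$ via the quantile transform $q_\mu$, apply Theorem~\ref{T:New1} to each piece, and combine using Chernoff on $n_\mu\sim\mathrm{Bin}(n,\alpha_\mu)$. You go slightly further than the paper in spelling out why the hypotheses of Theorem~\ref{T:New1} are actually met by $W_\mu$ — namely, that $\alpha_\mu\mu\leq\lambda$ forces $\mu$ to be absolutely continuous and atomless, which makes $q_\mu$ strictly increasing and $W_\mu$ Riemann integrable; the paper states $\varphi(W_\mu)=\varphi(\mu,W)$ and implicitly uses Riemann integrability of $W_\mu$ without this justification, so those two sentences are a genuine (if modest) clarification rather than a deviation.
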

Details or the proof  of Theorem \ref{C:main2}  are given in  Section \ref{S:main2}.

\subsection{An example}
As an illustration of  various colouring strategies discussed in Sections~\ref{S:c_balanced} and~\ref{S:combi}, we consider the graphon $W$ displayed in  Figure~\ref{f:block}, which is defined as follows:
\[
W(x,y) = 
\begin{cases}
\dfrac34, & \text{when $(x,y)\in (S_1\times S_3) \cup (S_2\times S_2) \cup (S_3 \times S_1)$,}\\
\dfrac78, & \text{when $(x,y) \in S_3\times S_3$,}\\
\dfrac12, & \text{otherwise,}
\end{cases}
\]
where
\[
S_1 = [0,\tfrac13),  \qquad S_2:=[\tfrac13,\tfrac23), \qquad S_3:=[\tfrac23,1].
\] 

\begin{figure}[h!]
\begin{minipage}[r]{0.45\textwidth}
	\flushright
	\begin{tikzpicture}
		\begin{scope}[scale=0.8]
			
			\draw [fill=lightgray!50!white] (0,0) rectangle (2,2);
			\node   at (1,1) {$\dfrac12$};
			\draw [fill=lightgray!50!white] (2,0) rectangle (4,2);
			\node  at (3,1) {$\dfrac12$};
			\draw [fill=lightgray!50!white] (2,0) rectangle (0,-2);
			\node  at (1,-1) {$\dfrac12$};
			\draw [fill=gray!80!white] (2,0) rectangle (4,-2);
			\node   at (3,-1) {$\dfrac34$};
			\draw [fill=lightgray!50!white] (4,-2) rectangle (6,0);
			\node   at (5,-1) {$\dfrac12$};
			\draw [fill=gray!80!white] (4,0) rectangle (6,2);
			\node   at (5,1) {$\dfrac34$};
			
			\draw [fill=gray!80!black] (6,-4) rectangle (4,-2);
			\node   at (5,-3) {$\dfrac78$};
			
			\draw [fill=lightgray!50!white] (2,-4) rectangle (4,-2);
			\node   at (3,-3) {$\dfrac12$};
			
			\draw [fill=gray!80!white] (0,-2) rectangle (2,-4);
			\node   at (1,-3) {$\dfrac{3}{4}$};
			
		\end{scope}
	\end{tikzpicture}
\end{minipage} 
\
\begin{minipage}[c]{0.41\textwidth}
	\centering
	\begin{itemize}
		\item[] "Balanced" strategy  requires $\left(\dfrac{7\log 2 }{9} +o(1)\right)\dfrac{n}{\log n} $ colours. 
		\item[]  "Greedy" strategy  requires $\left(\dfrac{5 \log 2}{6}+o(1)\right)\dfrac{n}{\log n}  $ colours. 
		\item[] Optimal colouring
		requires $\left(\dfrac{(5+\sqrt{3})\log 2}{9} +o(1)\right)\dfrac{n}{ \log n} $ colours.
	\end{itemize}
\end{minipage}
\caption{ A graphon $W$ and colouring strategies for $\calG(n,W)$}
\label{f:block} 
\end{figure}

As explained in Section  \ref{S:c_balanced}, the "balanced" strategy considers colour classes  that have the same number of vertices from each block $S_i$ and $(\varphi(W) +o(1))\dfrac{n}{2\log n}$ colours.  One can  show that 
\begin{align*}
\varphi(W)  &=  \int_{[0,1] } \int_{[0,1]}\log \left(\dfrac{1}{1-W(x,y)}\right)  dx \,dy
\\&=   \dfrac{5}{9} \log 2 + \dfrac{1}{3} \log 4 + \dfrac{1}{9} \log 8 = \dfrac{14}{9} \log 2.
\end{align*}
Thus, the "balanced" strategy uses
\[
\left(\dfrac{7\log 2 }{9} +o(1)\right)\dfrac{n}{\log n} \approx 0.54 \dfrac{n}{\log n}
\]
colours. 

Alternatively, we can try the "greedy" strategy. First, we pick largest independent sets, which consist only of vertices from $S_1$. Once we run out  of such sets, we have to use vertices from $S_2$ and $S_3$. Using the first moment calculations similar to Section  \ref{S:c_balanced}, one can show that among such independent sets, the largest size is achieved when we take twice more vertices  from $S_2$ than from $S_3$.
After colouring all such sets, it remains to colour half of vertices from $S_3$. Thus, the  "greedy" strategy corresponds to the following representation of the uniform measure $\lambda$:
\[
\lambda = \dfrac13 \mu_1 + \dfrac12 \mu_{23} + \dfrac16 \mu_3,
\]
where $\mu_1$ and $\mu_3$ are the uniform measures on $S_1$ and $S_3$, respectively,  and the measure $\mu_{23}$ is defined by
\[
\mu_{23} (U) = 2 \lambda(U \cap S_2) + \lambda(U \cap S_3). 
\]
One can compute  that
\begin{align*}
\varphi(\mu_1, W) = \log 2,
\qquad 
\varphi(\mu_{23}, W) =     
\dfrac53 \log 2,
\qquad 
\varphi(\mu_3, W) =    3 \log 2.
\end{align*}
We conclude  that the "greedy" strategy uses 
\[
\left( \dfrac 13 \log 2 + \dfrac12 \cdot \dfrac53 \log 2 + \dfrac16 \cdot 3\log 2 +o(1)\right) \dfrac{n}{2\log n} = \left(\dfrac{5}{6} \log 2 + o(1)\right)\dfrac{n}{\log n} \approx 0.58 \dfrac{n}{\log n}
\]
colours. In particular, we get that the "balanced" strategy  works better than "greedy" strategy for this graphon $W$.

In Section \ref{S:example}, we show that  the best colouring strategy have two types of independent sets. One type uses vertices from $S_1$ and $S_2$ in proportion $1:r$ (with more vertices from $S_1$), another uses vertices from $S_3$ and $S_2$ in proportion $(1-r):1$ (with more vertices from $S_3$), where $r = \dfrac{3\sqrt 3 -5}{2}$.  This   strategy  corresponds   to the following representation of the uniform measure $\lambda$:
\[
\lambda = \dfrac{\sqrt{3}-1}{2}\mu_{12}' + \dfrac{3-\sqrt{3}}{2}\mu_{23}',
\]
where the measures $\mu_{12}'$ and $\mu_{23}'$ are defined by
\begin{align*}
\mu_{12}'
(U) &=  (1+\sqrt 3) \lambda(U \cap S_1) + (2-\sqrt{3})\lambda(U \cap S_2),\\
\mu_{23}' (U) &= \dfrac{6-\sqrt{3}}{3} \lambda(U \cap S_2) + \dfrac{3+\sqrt{3}}{3}\lambda(U \cap S_3).
\end{align*}
Then, in Section \ref{S:example}, we compute that 
\begin{align*}
\varphi(\mu_{12}', W) &=  \dfrac{(1+\sqrt{3})^2}{9} \log 2 + \dfrac{2(1+\sqrt{3})(2-\sqrt{3})}{9} \log 2 + \dfrac{(2-\sqrt{3})^2}{9} \log 4
= \dfrac{16-4\sqrt 3}{9} \log 2,\\
\varphi(\mu_{23}', W) &=  \dfrac{(6-\sqrt{3})^2}{9^2} \log 4 + \dfrac{2(6-\sqrt{3})(3+\sqrt{3})}{9^2} \log 2 + \dfrac{(3+\sqrt{3})^2}{9^2} \log 8 = \dfrac{16}{9} \log 2.
\end{align*}
Thus, this strategy uses
\[
\left(\dfrac{\sqrt{3}-1}{2} \cdot   \dfrac{16-4\sqrt 3}{9} \log 2 + 
\dfrac{3-\sqrt{3}}{2} \cdot\dfrac{16}{9} \log 2 + o(1)\right) \dfrac{n}{2\log n} 
= \left(\dfrac{(5+\sqrt 3)}{9} \log 2 +o(1)\right)\dfrac{n}{\log n} \approx 0.52 \dfrac{n}{\log n}
\]
colours.
The computations and the proof that  this  colouring strategy is optimal  are given in detail in Section \ref{S:example}.  In particular, we can see that it outperforms the "balanced" strategy and  the "greedy" strategy.

\subsection{The conjecture}
Observe that the asymptotically tightest possible bound in  Corollary \ref{C:main2} is achieved when we take the smallest possible value of $\sum_{\mu \in \calM} \alpha_\mu \varphi(\mu,W)$, leading to the following definition:
\begin{equation} \label{def:var-star} 
\varphi_*(W) := \inf _{(\calM, \alphavec) \in \calC_{\lambda}} \   \sum\nolimits_{\mu \in \calM} \alpha_\mu \varphi(\mu,W).
\end{equation}
Equivalently,   we have 
\[
\varphi_*(W) = \inf\nolimits_{\xi}\ \E   \varphi(\mu^{\xi},W),
\]
where the infimum is taken over all $\xi \in\calP(\calP([0,1]))$ such that 
$ \E  \mu^{\xi} = \lambda$, where  $\mu^{\xi} \in \calP([0,1])$ is  distributed according to $\xi$.
Furthermore, one can show that the infimum over $\xi$ is  achieved   since the space of probability measures on probability measures on a compact set is compact. With the refined quantity $\varphi_*(W) $,   we obtain an upper bound of the chromatic number of $\G$, 
\begin{equation}\label{eq:refined}
\chi(\G) \leq  \big(\varphi_*(W) + o(1)
\big)\ \frac{n}{2\log n},
\end{equation}
immediately from Theorem \ref{C:main2}.

The main focus of the paper is answering the natural question whether  \eqref{eq:refined} is tight or not, that is, whether a {\em finite} number of different types of colour  classes   is sufficient to approximate  the chromatic number up to an  additive error of order $o(\frac{n}{\log n})$.
This question appears for the first time  in the recent work by Martinsson,  Panagiotou, Su, and~Truji\'c~\cite{MPSM2020}.  A variant of \cite[Conjecture~1.2]{MPSM2020} is stated below.
\begin{conj}\label{Con:main}
Assume that  $W$  is a graphon such that
\begin{equation}\label{eq:Winfsup}
	0<\inf_{x,y \in [0,1]} W(x,y) \leq \sup_{x,y \in [0,1]} W(x,y)<1.
\end{equation}
Then, whp as $n \rightarrow \infty$, 
\begin{equation}\label{eq:con}
	\chi(\G) =  (1+o(1))\, \varphi_* ( W) \frac{n}{2\log n} ,  \quad \text{where}  \  \    \G \sim \M(n,  W).
\end{equation}
\end{conj}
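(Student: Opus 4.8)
The upper bound in~\eqref{eq:con}, $\chi(\G)\le(\varphi_*(W)+o(1))\tfrac{n}{2\log n}$, is already in hand: it is exactly~\eqref{eq:refined}, which follows from Theorem~\ref{C:main2}. So the plan is entirely devoted to the matching lower bound $\chi(\G)\ge(1-o(1))\,\varphi_*(W)\,\tfrac{n}{2\log n}$ whp, and I would split it into two ingredients. \emph{(A) One-sided concentration of independent sets:} for every fixed $\delta>0$, whp every independent set $T$ in $\G$ satisfies $|T|\cdot\varphi(\hat\mu_T,W)\le(2+\delta)\log n$, where $\hat\mu_T=\tfrac1{|T|}\sum_{i\in T}\delta_{X_i}$ and $\varphi(\cdot,W)$ is as in~\eqref{def:var}. \emph{(B) Stability of $\varphi_*$ under perturbing the target measure:} writing $\varphi_*(W;\rho):=\inf\{\sum_j\alpha_j\varphi(\nu_j,W):\sum_j\alpha_j=1,\ \sum_j\alpha_j\nu_j=\rho\}$, so that $\varphi_*(W)=\varphi_*(W;\lambda)$, one has $\varphi_*(W;\hat\mu_{[n]})\ge\varphi_*(W)-o(1)$, where $\hat\mu_{[n]}=\tfrac1n\sum_i\delta_{X_i}$ is the overall empirical measure (which tends to $\lambda$ weakly a.s.\ by Glivenko--Cantelli). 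Granting both, the lower bound drops out: for any proper colouring with classes $T_1,\dots,T_N$, $N=\chi(\G)$, and $\beta_i=|T_i|/n$, the pair $(\{\hat\mu_{T_i}\}_i,\{\beta_i\}_i)$ is a decomposition of $\hat\mu_{[n]}$, so by (B) $\sum_i\beta_i\varphi(\hat\mu_{T_i},W)\ge\varphi_*(W)-o(1)$, while by (A) this sum is at most $(2+\delta)\tfrac{\log n}{n}N$; comparing and sending $\delta\to0$ gives the claim.

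Ingredient (A) I would prove by a crude union bound, valid for any $W$ with $\sup_{x,y}W(x,y)<1$ (it does not even need $\inf W>0$). At an atomic measure $\hat\mu_T$ the supremum over $S$ in~\eqref{def:var} ranges effectively over subsets of $\{X_i:i\in T\}$, so $|T|\cdot\varphi(\hat\mu_T,W)\le\log\tfrac{1}{1-\sup W}+\max_{\emptyset\ne T'\subseteq T}\tfrac1{|T'|}\sum_{i\ne j\in T'}\log\tfrac1{1-W(X_i,X_j)}$; since every $T'\subseteq T$ is also independent, it suffices to show that whp no independent set $T'$ has $\sum_{i\ne j\in T'}\log\tfrac1{1-W(X_i,X_j)}>(2+\tfrac\delta2)|T'|\log n$. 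For a fixed $T'$ of size $t$ with that property, the conditional probability it is independent equals $\prod_{i<j\in T'}(1-W(X_i,X_j))=\exp(-\tfrac12\sum_{i\ne j\in T'}\log\tfrac1{1-W(X_i,X_j)})<n^{-(1+\delta/4)t}$, so a union over the at most $n^{t}$ choices of $T'$ gives probability $\le n^{-(\delta/4)t}$; the density condition forces $t=\Omega(\log n)$, and summing over $t$ yields $o(1)$. I expect this step to be routine.

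Ingredient (B) is the crux, and this is where the structural hypotheses are used. For a \emph{block graphon} $W$ with blocks $V_1,\dots,V_k$, the quantity $\varphi(\nu,W)$ depends on $\nu$ only through the vector $(\nu(V_i))_i$ and equals an explicit function $g_W$ of it, namely a supremum of a ratio $s^{\mathrm{T}}Ls/\mathbf{1}^{\mathrm{T}}s$ over $0\le s\le(\nu(V_i))_i$ with $L_{ij}=\log\tfrac1{1-W}$ on $V_i\times V_j$; consequently $\varphi_*(W;\rho)$ is the value at $(\rho(V_i))_i$ of the lower convex envelope of $g_W$ on the simplex, which is continuous there and, by Carath\'{e}odory, attained by a combination of at most $k$ block-uniform measures. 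Since the sampled points a.s.\ avoid the finitely many block endpoints, $\varphi_*(W;\hat\mu_{[n]})$ converges to $\varphi_*(W)$; moreover, conditional on the (well-concentrated) block sizes $n_i=|\{l:X_l\in V_i\}|$, $\G\sim\M(n,W)$ is exactly the stochastic block model $\calG(\n,P)$, so the sharp two-sided form of (A) and the identification of the extremal SBM functional with $\varphi_*(W)$ both follow from \cite[Theorem~2.1]{IsaevKang-SBM}. This settles~\eqref{eq:con} for block graphons, with $k$ types of colour classes sufficing. To reach a $W$ \emph{approximable by block graphons in $\mathcal{L}^\infty$}, fix $\eps>0$, pick block graphons $W^-\le W\le W^+$ with $\|W^\pm-W\|_\infty\le\eps$, couple the edge indicators through common $X_i$'s so that $\chi(\calG(n,W^-))\le\chi(\G)\le\chi(\calG(n,W^+))$, note $\varphi_*(W^\pm)=\varphi_*(W)+O(\eps)$ since $t\mapsto\log\tfrac1{1-t}$ is Lipschitz on $[0,\sup W]$, and let $\eps\to0$. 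For \emph{block-increasing} (or block-Lipschitz) $W$ one cannot squeeze within $\mathcal{L}^\infty$, but monotonicity inside the cells lets one squeeze $W^-\le W\le W^+$ by block graphons on an $O(k)\times O(k)$ refinement of the cell grid agreeing with $W$ outside a region of measure $O(k^{-1})$ on which the densities differ by $O(1)$; estimating that region's contribution gives $\varphi_*(W^+)-\varphi_*(W^-)=O(k^{-1})\,\varphi_*(W)$, hence~\eqref{eq:con} up to a multiplicative $O(k^{-1})$ error, and exactly~\eqref{eq:con} on letting $k\to\infty$ slowly with $n$.

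The hard part, and the reason~\eqref{eq:con} remains a conjecture for a general $W$ obeying~\eqref{eq:Winfsup}, is ingredient (B) for an arbitrary measurable $W$. The map $\nu\mapsto\varphi(\nu,W)$ involves a supremum over \emph{all} measurable sets $S$ and so is, in general, neither weakly continuous nor weakly lower semicontinuous; equivalently, a transport $\tau$ carrying $\hat\mu_{[n]}$ onto $\lambda$ displaces points by only $\widetilde O(n^{-1/2})$, yet a merely measurable $W$ can change completely under such a displacement, so $\varphi(\nu,W)$ and $\varphi(\tau_{\#}\nu,W)$ need not be close. The block and monotonicity hypotheses are precisely what turn $\varphi(\cdot,W)$ into a finite-dimensional (respectively, controlled-oscillation) object on which (B) can be established; I see no way to remove them short of proving Conjecture~\ref{Con:main} in full.
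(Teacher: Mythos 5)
You correctly read~\eqref{eq:con} as a conjecture: the paper proves the upper bound in general (Corollary~\ref{Cor:upper}) and the matching lower bound only for block graphons (Theorem~\ref{T:block}), block-regulated graphons (Theorem~\ref{T:block-regulated}), and block-increasing or block-Lipschitz graphons (Theorem~\ref{Thm:con-inc}), explicitly leaving the general case --- indeed even the existence of the limit in~\eqref{question} --- open. Your sketch reaches exactly the same special cases, and pinpoints the same obstruction the authors name (no weak semicontinuity of $\nu\mapsto\varphi(\nu,W)$; in the paper's words, the chromatic number is too sensitive to cut-norm approximation). Where you do prove something, your route is genuinely different in organisation: you package the lower bound as the abstract scheme ``(A) whp every independent set $T$ satisfies $|T|\cdot\varphi(\hat\mu_T,W)\le(2+\delta)\log n$, plus (B) stability $\varphi_*(W;\hat\mu_{[n]})\ge\varphi_*(W)-o(1)$, then sum (A) over colour classes and apply (B)''. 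The paper instead imports the SBM theorem of~\cite{IsaevKang-SBM} (Theorem~\ref{Thm_sbm}) as a black box, reformulates it as Corollary~\ref{C:cond1}, and separately concentrates the block counts $\n(\X,W)$ via Lemma~\ref{L:con1} together with the scaling/triangle properties of $w_*$ (Theorem~\ref{l:Qmatrix}(a,d,e)). Your ingredient~(B) for a block graphon is exactly that concentration step rephrased as continuity of $w_*$ in the block masses, and your ingredient~(A) is the first-moment half hidden inside~\cite{IsaevKang-SBM}; so the scope is identical, but your decoupling is conceptually cleaner and makes visible why only~(B) needs the structural hypotheses. Your $\mathcal{L}^\infty$-sandwich and the one-cell shift for block-increasing $W$ track Sections~\ref{S:b-regulated} and~\ref{S:b-cont} closely, with Theorem~\ref{T:norm}(c) supplying the quantitative stability you appeal to. Two points to make rigorous if you pursue (A)+(B) directly: the asserted continuity of the lower convex envelope of $w$ on the block simplex including its boundary, and the diagonal contribution $x=y$ (which has positive $\hat\mu_T\times\hat\mu_T$-mass) inside $\varphi(\hat\mu_T,W)$, which you absorb into a single $\log\frac{1}{1-\sup W}$ term somewhat hastily.
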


\begin{remark}\label{Rem:Martinsson}
In fact, \cite[Conjecture~1.2]{MPSM2020}  only assumes    that the
essential infimum of $W$   lies in $(0,1)$.  
However,   we certainly need an upper bound on  $W$ since otherwise there might  exist a clique of  linear size
in $\G \sim  \M(n, W) $.
Also,    \cite[Conjecture~1.2]{MPSM2020}  is  stated in a slightly different form
$\chi(\G) = (1+o(1))\displaystyle \dfrac{n}{c^* \log n}$, where definition of $c^*$ is quite technical so we omit it here. Using the Radon-Nikodym theorem, one can establish that $\varphi_*( W)    =2 /c^*$, that is, the conjectured formulas are equivalent.
\end{remark}  

%

From the results presented above, the top half of  Conjecture \ref{Con:main} is straightforward.
\begin{cor}\label{Cor:upper}
	The  upper bound for $\chi(\G)$ in Conjecture \ref{Con:main} holds
	for any  Riemann  integrable graphon  $W$.
\end{cor}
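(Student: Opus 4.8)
The plan is to deduce the statement directly from Theorem \ref{C:main2} together with the definition \eqref{def:var-star} of $\varphi_*(W)$; essentially no new work is needed. Unwinding the paper's convention on asymptotic statements, the upper bound in \eqref{eq:con} is exactly the assertion \eqref{eq:refined}: for every fixed $\epsilon>0$, the event $\chi(\G)\le(\varphi_*(W)+\epsilon)\frac{n}{2\log n}$ holds whp, where $\G\sim\M(n,W)$. (Under the standing hypothesis \eqref{eq:Winfsup} of Conjecture \ref{Con:main} one checks directly that $0<\varphi_*(W)<\infty$, so this is equivalent to the $(1+o(1))$-form of the bound; in particular \eqref{eq:Winfsup} already forces $\sup_{x,y}W(x,y)<1$, which is the hypothesis needed to invoke Theorem \ref{C:main2}.) So it suffices to prove \eqref{eq:refined} for every Riemann integrable $W$ with $\sup W<1$.

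First I would fix $\epsilon>0$. By the definition of $\varphi_*(W)$ as the infimum in \eqref{def:var-star}, I pick a \emph{single} representation $(\calM,\alphavec)\in\calC_{\lambda}$ of the uniform measure $\lambda$ as a finite convex combination of probability measures with
\[
\sum_{\mu\in\calM}\alpha_\mu\,\varphi(\mu,W)\ \le\ \varphi_*(W)+\tfrac{\epsilon}{2}.
\]
Now I apply Theorem \ref{C:main2} to this fixed $(\calM,\alphavec)$, whose hypotheses (Riemann integrability of $W$ and $\sup W<1$) are in force: whp
\[
\chi(\G)\ \le\ \Big(\sum_{\mu\in\calM}\alpha_\mu\,\varphi(\mu,W)+o(1)\Big)\frac{n}{2\log n}\ \le\ \Big(\varphi_*(W)+\tfrac{\epsilon}{2}+o(1)\Big)\frac{n}{2\log n}.
\]
Since the $o(1)$ term is below $\epsilon/2$ for all sufficiently large $n$, this gives $\chi(\G)\le(\varphi_*(W)+\epsilon)\frac{n}{2\log n}$ whp; as $\epsilon>0$ was arbitrary, \eqref{eq:refined} follows, and with it the upper bound in Conjecture \ref{Con:main}.

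The only real subtlety here — and the one step I would take care over — is the order of the quantifiers. Theorem \ref{C:main2} bounds $\chi(\G)$ for one representation at a time, with an error term $o(1)$ that may depend on $(\calM,\alphavec)$; there is no claim of uniformity over $\calC_\lambda$, and no reason that a single sequence of colourings should realise the infimum \eqref{def:var-star} as $n\to\infty$. The remedy is precisely the ordering used above: commit to $\epsilon$ first, then choose an $\epsilon/2$-near-optimal representation (which exists by the bare definition of an infimum — the compactness remark in the excerpt, which even gives attainment, is not needed for this step), and only then let $n\to\infty$. The rest is bookkeeping: confirming $\sup W<1$ so that Theorem \ref{C:main2} is applicable, and noting that $\varphi_*(W)\in(0,\infty)$ under \eqref{eq:Winfsup} so that \eqref{eq:refined} and the conjectured upper bound coincide; for a completely unrestricted Riemann integrable $W$ the statement is to be read with $\sup W<1$, which is in any case implied by the hypotheses of the conjecture to which this is a corollary.
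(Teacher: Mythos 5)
Your proof is correct and follows essentially the same route as the paper: invoke Theorem \ref{C:main2}, optimise over representations to get \eqref{eq:refined}, and use $\varphi_*(W)=\Theta(1)$ under \eqref{eq:Winfsup} to pass between the additive-$o(1)$ and multiplicative-$(1+o(1))$ forms. The only difference is that you carefully spell out the quantifier order behind the step the paper labels ``immediately from Theorem \ref{C:main2}'' (fix $\epsilon$, choose an $\epsilon/2$-near-optimal $(\calM,\alphavec)$, then let $n\to\infty$), which is indeed the right way to justify \eqref{eq:refined}.
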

\begin{proof}
	By definitions \eqref{def:var} and \eqref{def:var-star}, it is straightforward to show that  
	\begin{equation}\label{W-trivial}
		\log \left( \frac{1}{1-\inf_{x,y \in [0,1]} W(x,y) }\right) \leq \varphi_*(W)  \leq \varphi(\mu, W)  \leq	\log \left( \frac{1}{1-\sup_{x,y \in [0,1]} W(x,y) }\right). 
	\end{equation}
	From \eqref{eq:Winfsup} and \eqref{W-trivial}, we find that $\varphi_*(W) = \Theta(1)$. Using    \eqref{eq:refined}, we get
	\[
	\chi(\G) \leq (\varphi_* ( W) +o(1))\frac{n}{2\log n} = (1+o(1))\, \varphi_* ( W) \frac{n}{2\log n},
	\]
	as required. 
\end{proof}

To our knowledge, the   lower bound for  $\chi(\G)$ in Conjecture \ref{Con:main} for arbitrary graphons remains open.  Moreover, we lack a result answering the following question in full generality.      
\begin{equation}\label{question}
	\text{Is it true that $\dfrac{\log n}{n}\E [\chi(\G)]$ converges as $n \rightarrow \infty$, where $\G\sim \calG(n,W)$?}
\end{equation}
Clearly, Conjecture \ref{Con:main} implies the positive answer.  It would be interesting to find a simple argument justifying the existence of limit without going into tedious calculations of its value.

  In this paper, we confirm  Conjecture \ref{Con:main}  for block graphons and certain classes of graphons that can be approximated by block graphons; see the next three subsections for details.

%
%
%

\subsection{Block graphons}

A graphon  $W$ is called   a \emph{block graphon}  if there exists a finite family $\calF$   of   disjoint measurable sets partitioning $[0,1]$  such that  $W$ is constant on  $S \times S'$ for any $S,S' \in \calF$. Specifically, we say $W$ is a \emph{$k$-block graphon} if
the number of such sets equals $k$, that is,
\[
W   \equiv \sum_{i,j \in [k]}  p_{ij}  \one_{S_i \times S_j}, 
\]
where $p_{ij}\in [0,1]$ for all $i,j \in [k]$, 
the sets $(S_i)_{i\in [k]}$ form a partition of $[0,1]$, and
$\one_{S_i \times S_j}$ is the characteristic function of  $S_i \times S_j$.
The graphon displayed in Figure \ref{f:block} is an example.

For a graphon $W:[0,1]^2 \rightarrow [0,1)$, let 
\begin{equation} \label{def:var-k} 
	\varphi_k(W) := \inf _{\substack{(\calM, \alphavec) \in \calC_{\lambda}\\
			|\calM| \leq k }} \   \sum\nolimits_{\mu \in \calM} \alpha_\mu \varphi(\mu,W).
\end{equation}
From definitions \eqref{def:var_uni} and \eqref{def:var-star}, observe that
\[
\varphi_1(W) = \varphi(W) \quad \text{ and } \quad \varphi_k(W) \searrow \varphi_{*}(W) \text{ as $k \rightarrow \infty$,}
\]
where $ \searrow$ means  decreasing convergence.
The quantity $\varphi_k(W) \frac{n}{2\log n}$ corresponds to the number of colours required for proper colouring of $\G \sim \calG(n,W)$ with at most $k$ different types of colour classes. 
We show that if $W$ is a $k$-block graphon, then  such colouring strategy is asymptotically optimal; see the following theorem. 

\begin{thm}\label{T:block}
	If  $W:[0,1]^2 \rightarrow [0,1)$ is a $k$-block graphon,  then whp as $n\rightarrow \infty$  
	\[
	\chi(\G) = \left(\varphi_k(W) + o(1)\right) \dfrac{n}{2\log n}, \qquad \text{where $\G\sim \calG(n,W)$}.
	\]
\end{thm}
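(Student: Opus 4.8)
The upper bound $\chi(\G) \leq (\varphi_k(W)+o(1))\frac{n}{2\log n}$ is immediate from Theorem \ref{C:main2}: by definition of $\varphi_k(W)$ in \eqref{def:var-k} there is a representation $(\calM,\alphavec) \in \calC_\lambda$ with $|\calM|\leq k$ that comes within $\epsilon$ of the infimum, and Theorem \ref{C:main2} applied to this representation gives the bound (note a $k$-block graphon with $\sup p_{ij}<1$ is automatically Riemann integrable, and if $\sup p_{ij}=1$ then some $\varphi(\mu,W)=\infty$ and there is nothing to prove, or one handles it by a truncation argument). So the real content is the matching lower bound.

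The plan for the lower bound is to invoke the known result \cite[Theorem~2.1]{IsaevKang-SBM} on the chromatic number of the stochastic block model, as signposted in the introduction. The key observation is that $\G \sim \calG(n,W)$ for a $k$-block graphon $W$ with blocks $(S_i)_{i\in[k]}$ is, up to conditioning, a stochastic block model: if $n_i := |\{j \in [n] : X_j \in S_i\}|$ denotes the number of sampled points landing in block $i$, then conditional on $(n_i)_{i\in[k]}$ the graph $\G$ is exactly $\calM(\n, P)$ with $\n = (n_1,\dots,n_k)$ and $P = (p_{ij})$. Since $X_1,\dots,X_n$ are i.i.d.\ uniform, $(n_1,\dots,n_k)$ is multinomial with parameters $n$ and $(\lambda(S_1),\dots,\lambda(S_k))$, so by concentration (Chernoff) we have $n_i = (\lambda(S_i)+o(1))n$ whp. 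Then I would apply \cite[Theorem~2.1]{IsaevKang-SBM} with these block sizes to get $\chi(\G) = (1+o(1))\,\gamma\,\frac{n}{2\log n}$ whp for the appropriate constant $\gamma$ coming from the SBM formula, and the final step is to verify the identity $\gamma = \varphi_k(W)$ — i.e.\ that the optimisation problem defining the SBM chromatic number coincides with $\inf_{(\calM,\alphavec)\in\calC_\lambda,\,|\calM|\leq k}\sum_\mu \alpha_\mu \varphi(\mu,W)$.

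The main obstacle, and the step requiring the most care, is precisely this translation of optimisation problems. On the SBM side, a colouring strategy is described by how one splits the $k$ blocks into colour-class "types", each type specified by a vector of proportions $(q_1,\dots,q_k)$ saying what fraction of each block it draws from; the relevant quantity for a type is the first-moment threshold $2\log n / \big(\sum_{i,j} q_i q_j \log\frac{1}{1-p_{ij}} \big/ \sum_i q_i\big)$-style expression, and one optimises over convex combinations of types reconstituting the uniform distribution over blocks. On the graphon side, a measure $\mu \in \calP([0,1])$ enters through $\varphi(\mu,W)$ in \eqref{def:var}. I would show that for a $k$-block graphon it suffices to consider measures $\mu$ that are piecewise-uniform on the blocks — determined by the vector $(\mu(S_1),\dots,\mu(S_k))$ — because within each block $W$ is constant so redistributing mass uniformly inside a block does not change any of the integrals in \eqref{def:var}; and then the sup over $S$ in \eqref{def:var} reduces to a sup over unions of (fractions of) blocks, matching the "densest sub-configuration" quantity appearing in the SBM analysis. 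Carrying out this dictionary carefully — in particular checking that the "$\leq k$ types suffice" claim on the graphon side is genuinely what the SBM theorem delivers, and that the suprema/infima are attained on both sides so the values agree exactly rather than just up to limits — is where the work lies; the probabilistic input (multinomial concentration plus a black-box SBM theorem) is routine by comparison.
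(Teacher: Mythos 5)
Your proposal is in substance the route the paper itself takes: Theorem~\ref{T:block} is derived there as the constant-sequence case of Theorem~\ref{T:sequences1}, whose proof conditions on the multinomial block-size vector $\n(\X,W)$, invokes the black-box stochastic block model result (packaged as Corollary~\ref{C:cond1}), and translates the SBM quantity $w_*(\uvec^W)$ into $\varphi_k(W)=\varphi_*(W)$ via Lemma~\ref{L:phi-relation} and Theorem~\ref{l:Qmatrix}(f) (the ``$\le k$ types suffice'' fact you correctly flag). Two of the steps you wave at as routine are not, and one of them is a genuine omission. First, the SBM formula yields $\chi(\G)=(1+o(1))\,w_*(\n(\X,W))/(2\log n)$ for the \emph{random} block-size vector; to replace $w_*(\n(\X,W))$ by the deterministic $n\,\varphi_k(W)$ you need Lipschitz-type stability of $w_*$ under $o(n)$ perturbations of its argument, which the paper extracts from the scaling, monotonicity, triangle-inequality and upper-bound properties in Theorem~\ref{l:Qmatrix}(a,d,e) together with the concentration Lemma~\ref{L:con1}; merely saying ``$n_i=(\lambda(S_i)+o(1))n$ whp'' does not by itself license the substitution. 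Second, and more seriously, the SBM black box (Corollary~\ref{C:cond1} / Theorem~\ref{Thm_sbm}) carries a non-degeneracy hypothesis $w_*(\n)\ge\eps' n$, which genuinely fails for some $k$-block graphons mapping into $[0,1)$: for instance any $W$ with all diagonal blocks $p_{ii}=0$ has $\varphi_*(W)=0$. The paper handles this by coupling $\G$ with a graphon $W'$ having $p_{ij}'=(1-\eps')p_{ij}+\eps'$, applying the theorem to $W'$, and letting $\eps'\to 0$; your proposal has no analogue of this step and would break here. A minor further point: a block graphon need not be Riemann integrable (the blocks are only required to be measurable sets, not intervals), so your appeal to Theorem~\ref{C:main2} for the upper bound should first apply a measure-preserving rearrangement of the blocks into intervals --- though in fact the SBM result already supplies both directions, so the detour through Theorem~\ref{C:main2} is unnecessary.
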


In fact, in Section \ref{S:SBM}, we prove a more general result, Theorem \ref{T:sequences1},  that immediately implies Theorem \ref{T:block}.
As a  corollary, we get the following.

\begin{cor}\label{C:block}
	Conjecture \ref{Con:main} holds if $W$ is a block graphon.
\end{cor}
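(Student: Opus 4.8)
The plan is to obtain Corollary~\ref{C:block} by combining Theorem~\ref{T:block} with the purely analytic identity $\varphi_k(W)=\varphi_*(W)$, which I will prove for every $k$-block graphon $W$. If $W$ is a block graphon satisfying \eqref{eq:Winfsup}, then by definition it is a $k$-block graphon for some finite $k$, and $\sup_{x,y}W(x,y)<1$ forces $W:[0,1]^2\to[0,1)$, so Theorem~\ref{T:block} gives $\chi(\G)=(\varphi_k(W)+o(1))\frac{n}{2\log n}$ whp. Granting the identity, this becomes $\chi(\G)=(\varphi_*(W)+o(1))\frac{n}{2\log n}$, and since $\varphi_*(W)=\Theta(1)$ by \eqref{W-trivial} and \eqref{eq:Winfsup} (exactly as used in the proof of Corollary~\ref{Cor:upper}), the additive $o(1)$ is absorbed into a multiplicative $(1+o(1))$, which is precisely \eqref{eq:con}. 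Thus everything reduces to the deterministic claim $\varphi_k(W)=\varphi_*(W)$, whose nontrivial half is $\varphi_k(W)\le\varphi_*(W)$ (the reverse being immediate from \eqref{def:var-star} and \eqref{def:var-k}).

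First I would record that for a $k$-block graphon with blocks $(S_i)_{i\in[k]}$ and values $p_{ij}$ --- discarding null blocks so that $\lambda(S_i)>0$ for all $i$ --- the quantity $\varphi(\mu,W)$ depends on $\mu$ only through its block profile $\m(\mu):=(\mu(S_1),\dots,\mu(S_k))$. Indeed, with $c_{ij}:=\log\frac{1}{1-p_{ij}}\ge 0$ and $t_i:=\mu(S\cap S_i)$, the ratio in \eqref{def:var} equals $\big(\sum_{i,j}c_{ij}t_it_j\big)/\big(\sum_i t_i\big)$, and examining this ratio one coordinate at a time (with the others fixed it is either monotone or maximised at an endpoint of $[0,\mu(S_i)]$) shows the supremum over the box $0\le t_i\le\mu(S_i)$ is attained at a vertex. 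Hence $\varphi(\mu,W)=\Phi(\m(\mu))$, where $\Phi(\m):=\max_{\emptyset\neq I\subseteq[k]}\big(\sum_{i,j\in I}c_{ij}m_im_j\big)/\big(\sum_{i\in I}m_i\big)$ is a continuous function on the simplex $\Delta:=\{\m\in[0,1]^k:\sum_i m_i=1\}$.

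Next I would observe that any $\m\in\Delta$ is the profile of the explicit measure $\mu_{\m}:=\sum_i\frac{m_i}{\lambda(S_i)}\lambda|_{S_i}$, and that $\sum_\ell\beta_\ell\mu_{\m_\ell}=\lambda$ as soon as $\sum_\ell\beta_\ell\m_\ell$ equals the profile $(\lambda(S_i))_i$ of $\lambda$ (for $\beta_\ell\ge 0$, $\sum_\ell\beta_\ell=1$). Consequently $\varphi_*(W)=\inf\big\{\sum_\ell\beta_\ell\Phi(\m_\ell):\m_\ell\in\Delta,\ \sum_\ell\beta_\ell\m_\ell=(\lambda(S_i))_i\big\}$, i.e.\ $\varphi_*(W)$ is the value at $(\lambda(S_i))_i$ of the convex envelope of $\Phi$ on $\Delta$, and $\varphi_k(W)$ is the same quantity with at most $k$ points permitted. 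To finish, take a representation attaining (or nearly attaining) $\varphi_*(W)$; the point $\big((\lambda(S_i))_i,\varphi_*(W)\big)$ lies in the convex hull of the graph $\{(\m,\Phi(\m)):\m\in\Delta\}$, a compact \emph{connected} subset of the $k$-dimensional affine space $\{\sum_i x_i=1\}\times\Reals$. By the Fenchel--Bunt refinement of Carathéodory's theorem for connected sets, that point is a convex combination of at most $k$ graph points $(\m_\ell,\Phi(\m_\ell))$; the measures $\mu_{\m_\ell}$ with the matching weights then form an element of $\calC_\lambda$ of size at most $k$ with value $\sum_\ell\beta_\ell\Phi(\m_\ell)=\varphi_*(W)$, so $\varphi_k(W)\le\varphi_*(W)$.

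The hard part is this last step: pushing the number of colour types down to exactly $k$ rather than the $k+1$ that plain Carathéodory would give. That is precisely where one must use connectedness of the graph of $\Phi$ (equivalently, that the relevant point sits on the lower boundary of the convex hull and hence in a face of dimension at most $k-1$). The other ingredient --- the vertex-maximisation fact for the ratio $\sum_{i,j}c_{ij}t_it_j/\sum_i t_i$ underlying $\varphi(\mu,W)=\Phi(\m(\mu))$, and the check that $\mu_{\m}$ carries profile combinations back to genuine elements of $\calC_\lambda$ --- is routine but needs to be stated carefully.
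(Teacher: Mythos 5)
Your proof is correct, but it takes a genuinely different route from the paper's. The paper obtains the identity $\varphi_k(W)=\varphi_*(W)$ \emph{indirectly and probabilistically}: it sandwiches $\chi(\G)$ between the asymptotics $(\varphi_k(W)+o(1))\frac{n}{2\log n}$ from Theorem~\ref{T:block} and the upper bound $(\varphi_*(W)+o(1))\frac{n}{2\log n}$ from \eqref{eq:refined} (a consequence of Theorem~\ref{C:main2}), uses the trivial inequality $\varphi_k\geq\varphi_*$, and lets $n\to\infty$ to force equality. You instead prove $\varphi_k(W)=\varphi_*(W)$ directly as a deterministic fact, via the observation that $\varphi(\mu,W)$ factors through the block profile of $\mu$ (so the optimisation lives on the $(k-1)$-simplex), $\varphi_*(W)$ is the convex envelope of the continuous function $\Phi$ at the profile of $\lambda$, and Fenchel--Bunt (connectedness of the graph of $\Phi$) cuts the Carath\'eodory bound from $k+1$ to $k$ representing points. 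This is in effect a re-derivation of Theorem~\ref{l:Qmatrix}(f) (``minimal system of $k$ vectors''), which the paper already imports from \cite{IsaevKang-SBM} but does not invoke for this corollary; combining that part~(f) with Lemma~\ref{L:phi-relation} and the scaling in Theorem~\ref{l:Qmatrix}(a) would give the same identity in one line. Your approach has the merit of isolating the purely analytic content and of needing only Theorem~\ref{T:block} (not Theorem~\ref{C:main2}); the paper's approach is shorter given the machinery already assembled, at the cost of obtaining a deterministic identity as a corollary of a probabilistic coupling. One small point worth making explicit in your write-up: the reduction of $\varphi(\mu,W)$ to a function of the profile needs the observation that the corner $t_i\in\{0,\mu(S_i)\}$ is actually \emph{achievable} by a measurable $S$, namely $S=\bigcup_{i:t_i=\mu(S_i)}S_i$ --- without this the supremum over $S$ could a priori be strictly below the supremum over the box, since an arbitrary $\mu$ need not realise every point of $\prod_i[0,\mu(S_i)]$.
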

\begin{proof}
	Assume that $W$ is a $k$-block graphon. 
	Combining Theorems~\ref{C:main2} and~\ref{T:block}, we get that whp
	\[
	\left(\varphi_k(W) + o(1)\right) \dfrac{n}{2\log n} =    \chi(\G)    \leq (\varphi_*(W) + o(1)) \dfrac{n}{2\log n}.
	\]
	Recalling that $\varphi_k(W)\geq \varphi_*(W)$ and repeating the arguments of Corollary \ref{Cor:upper},  we establish that $\varphi_k(W) = \varphi_*(W) = \Omega(1)$ and complete the proof.
\end{proof}

In general, the computation of $\varphi_k(W)$ for a $k$-block graphon $W    \equiv \sum_{i,j \in [k]}  p_{ij}  \one_{S_i \times S_j}$
is a  continuous optimisation problem in $k^2-k$ dimensions. Indeed,
the distribution of $\G_\mu \sim \calG(n,W^{\mu})$ only depends on 
probabilities $\mu(S_i)$ which represent  $k$ variables for each $\mu$.  Since 
$\sum_{i\in k} \mu(S_i) =1$ and $|\calM| \leq k$, overall we get $k^2- k$   variables.
An example of such computation for the graphon from Figure \ref{f:block} is given in Section~\ref{S:example}.

Next results classifies all block graphons $W$  for which the "balanced" colouring strategy is asymptotically optimal.
\begin{thm}\label{thm:balanced}
	Let $
	W   \equiv \sum_{i,j \in [k]}  p_{ij}\one_{S_i \times S_j} $ be a 
	$k$-block graphon. Then $\varphi_*(W) = \varphi(W)$ if and only if 
	there is a set $U \subseteq [k]$ such that
	\[
	\varphi(W) = \varphi(\lambda^U,W), 
	\]
	where $\lambda^U$ is the uniform measure on $\bigcup_{i\in U} S_i$, 
	and 
	for all $\yvec = (y_1,\ldots,y_k)^T\in \Reals^k$ with $\sum_{i \in U} y_i =0$ and $y_i = 0$ for $i\notin U$, we have
	\[
	\sum y_i y_j \log \dfrac{1}{1-p_{ij}} \geq 0.
	\]
\end{thm}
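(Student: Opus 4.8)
The plan is to analyse $\varphi_*(W)$ via the representation $\varphi_*(W) = \inf_\xi \E\,\varphi(\mu^\xi,W)$, where the infimum runs over $\xi\in\calP(\calP([0,1]))$ with $\E\,\mu^\xi = \lambda$, and to show that for a $k$-block graphon the only "obstruction" to lowering this below $\varphi(W)$ is a local second-order one. The first observation is a reduction: since $W$ is constant on the blocks $S_i\times S_j$, a measure $\mu$ affects $\G_\mu$ only through the vector $\mathbf{m} = (\mu(S_1),\dots,\mu(S_k))$ in the simplex $\Delta_{k-1}$, and one can check from \eqref{def:var} that $\varphi(\mu,W)$ depends on $\mu$ only through $\mathbf{m}$; call this $\psi(\mathbf{m})$, so that $\psi$ is a continuous function on $\Delta_{k-1}$ and $\varphi(W) = \psi(\mathbf{m}_0)$ with $\mathbf{m}_0 = (\lambda(S_1),\dots,\lambda(S_k))$. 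Then $\varphi_*(W) = \inf\{\E\,\psi(\mathbf{M}) : \mathbf{M}$ a random point of $\Delta_{k-1}$ with $\E\,\mathbf{M} = \mathbf{m}_0\}$, i.e. $\varphi_*(W)$ is the value at $\mathbf{m}_0$ of the lower convex envelope of $\psi$ on $\Delta_{k-1}$. Hence $\varphi_*(W) = \varphi(W) = \psi(\mathbf{m}_0)$ if and only if $\mathbf{m}_0$ lies on the lower convex envelope of $\psi$, equivalently there is a supporting affine function $\ell$ with $\ell \le \psi$ everywhere on $\Delta_{k-1}$ and $\ell(\mathbf{m}_0) = \psi(\mathbf{m}_0)$.

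Next I would compute $\psi$ explicitly enough to extract the two conditions. Writing $c_{ij} = \log\frac{1}{1-p_{ij}}$ and, for a subset $V\subseteq[k]$, letting $q_i = \mu(S_i)$ with support in $V$, the supremum in \eqref{def:var} over sets $S$ that are unions of blocks gives the candidate value $\frac{\sum_{i,j\in V} q_i q_j c_{ij}}{\sum_{i\in V} q_i}$; a short argument (conditioning/averaging within a block can only decrease the ratio-to-mass — this is where block-constancy of $W$ is used) shows the supremum over \emph{all} measurable $S$ is attained at such a block-union, so $\psi(\mathbf{m}) = \max_{\emptyset\ne V\subseteq \mathrm{supp}(\mathbf{m})} \frac{\mathbf{m}_V^{\!\top} C\, \mathbf{m}_V}{\|\mathbf{m}_V\|_1}$, where $\mathbf{m}_V$ is $\mathbf{m}$ restricted to $V$ and $C = (c_{ij})$. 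The "only if" direction then proceeds as follows: if $\mathbf{m}_0$ is on the lower convex envelope, pick the maximising set $U$ in the definition of $\varphi(W) = \psi(\mathbf{m}_0)$ — this is the claimed $U$ with $\varphi(W) = \varphi(\lambda^U,W)$, since $\lambda^U$ corresponds exactly to $\mathbf{m}_0$ restricted and renormalised on $U$ (one checks $\psi(\mathbf{m}_0) = \frac{(\mathbf{m}_0)_U^\top C (\mathbf{m}_0)_U}{\|(\mathbf{m}_0)_U\|_1} = \varphi(\lambda^U,W)$). For the second-order condition, restrict attention to perturbations $\mathbf{m}_0 + t\mathbf{y}$ with $\mathrm{supp}(\mathbf{y})\subseteq U$ and $\sum_{i\in U} y_i = 0$: these keep $\|\mathbf{m}_V\|_1$ fixed for $V=U$, so along this direction the $V=U$ term of $\psi$ equals $\varphi(W) + \frac{t^2}{\|(\mathbf{m}_0)_U\|_1}\sum y_iy_j c_{ij} + O(t^3\text{-type corrections from }\mathbf{m}_0\text{-linear part})$ — actually since $\sum y_i = 0$ the linear term vanishes and the quadratic term is exactly $\frac{t^2}{\|(\mathbf m_0)_U\|_1}\mathbf y^\top C\mathbf y$. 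Convexity of the envelope at $\mathbf{m}_0$ forces $\psi(\mathbf m_0 + t\mathbf y) + \psi(\mathbf m_0 - t\mathbf y) \ge 2\psi(\mathbf m_0)$, and since $\psi \ge$ its $V=U$ term which equals $\varphi(W) + \frac{t^2}{\|(\mathbf m_0)_U\|_1}\mathbf y^\top C \mathbf y + o(t^2)$ while being $\le \varphi(W) + o(1)$ along the envelope... more cleanly: the envelope being affine and $\le\psi$ near $\mathbf m_0$ with equality at $\mathbf m_0$ forces the Hessian of (any smooth branch of) $\psi$ at $\mathbf m_0$ to be positive semidefinite on the subspace $\{\mathbf y : \mathrm{supp}\subseteq U, \sum_U y_i = 0\}$, which is precisely $\mathbf y^\top C\mathbf y \ge 0$ there. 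For the "if" direction, given $U$ with both properties I would exhibit the supporting affine function: set $\ell(\mathbf m) = $ the linear form extending the behaviour of the $V=U$ ratio near $\mathbf m_0$ (using that $\sum_U y_i = 0$ perturbations are handled by the PSD condition and $\sum_U y_i \ne 0$ perturbations by the scaling $\frac{\text{quadratic}}{\text{linear}}$ being $\ge$ linear by AM–GM type bound), and verify $\ell \le \psi$ on all of $\Delta_{k-1}$ by checking it against each term $\frac{\mathbf m_V^\top C\mathbf m_V}{\|\mathbf m_V\|_1}$ for every $V$, using $\varphi(W) = \varphi(\lambda^U,W) \ge \varphi(\lambda^V, W)$-type domination of $U$.

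The main obstacle I expect is two-fold. First, justifying that the supremum in \eqref{def:var} is attained on a union of blocks requires care: one must show that for any measurable $S$ the ratio $\frac{1}{\mu(S)}\iint_{S\times S}c(x,y)\,d\mu\,d\mu$ does not exceed its value for the block-union $S' = \bigcup\{S_i : \mu(S\cap S_i) > 0\}$ or for some sub-union — this is a convexity/averaging statement in the $2$-dimensional "mass in each block" data and needs a clean lemma. Second, and more seriously, constructing the explicit supporting affine function $\ell$ in the "if" direction: $\psi$ is a maximum of ratios of (quadratic)/(linear) forms, hence neither convex nor concave and not smooth where the maximising $V$ changes, so proving $\ell \le \psi$ globally on $\Delta_{k-1}$ from the two given conditions (one of which is only a statement about directions \emph{within} $U$ with zero coordinate-sum) is the delicate combinatorial-geometric heart of the argument, and will likely require a case analysis on $\mathrm{supp}(\mathbf m)$ versus $U$ together with the maximality of $U$ in $\varphi(W) = \varphi(\lambda^U, W)$. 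I would organise the write-up as: (i) the reduction to $\psi$ on the simplex and the convex-envelope characterisation; (ii) the block-union lemma for the supremum; (iii) the "only if" direction via second-order necessary conditions for the envelope; (iv) the "if" direction via explicit construction of $\ell$.
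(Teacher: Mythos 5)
Your convex–envelope framing is a correct and equivalent way to look at $\varphi_*$: for a $k$-block graphon $\varphi(\mu,W)$ depends on $\mu$ only through $(\mu(S_1),\dots,\mu(S_k))$, and $\varphi_*(W)$ is indeed the lower convex envelope of the resulting function $\psi$ at $\mathbf m_0$ (this is precisely what the paper's Lemma~\ref{L:phi-relation} encodes via the functions $w$ and $w_*$ on $\pReals^k$). The block-union reduction of the supremum in \eqref{def:var} is also needed and the paper proves exactly this. So steps (i)–(ii) of your plan are fine.

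The genuine gap is in step (iii), the ``only if'' direction. You say ``pick the maximising set $U$'' and then do a second-order analysis along directions supported on $U$ with zero coordinate sum, but the maximiser $V$ in $\psi(\mathbf m)=\max_V (\mathbf m_V^\top C\mathbf m_V)/\|\mathbf m_V\|_1$ is in general \emph{not unique}. If some other maximiser $V'$ at $\mathbf m_0$ does not contain $U$, then perturbing $\mathbf m_0$ in a direction supported on $U$ need not change the $V'$-branch at all, and your inequality $\psi(\mathbf m_0+t\mathbf y)+\psi(\mathbf m_0-t\mathbf y)\ge 2\psi(\mathbf m_0)$ gives you no control over $\mathbf y^\top C\mathbf y$. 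The paper's key extra ingredient, which you are missing, is the nesting property of corner maximisers (Lemma~\ref{L:extended}(a), proved from the algebraic identity of Lemma~\ref{l:identity}): the set $\{\zvec : z_i\in\{0,x_i\}, \ \zvec^\top Q\zvec/\|\zvec\| = w(\xvec)\}$ is closed under coordinatewise minimum, so there is a unique \emph{smallest} maximiser, and one must take $U$ to be its support. Only then does every maximiser contain $U$, so the $\pm t\mathbf y$ perturbation moves all competing branches coherently and the contradiction goes through. Also, a small slip: $(\mathbf m_0)_U^\top C\mathbf y$ does not vanish merely because $\sum_{i\in U}y_i=0$; what happens is that the linear terms \emph{cancel} between the $+t$ and $-t$ perturbations, which you do implicitly use but misdescribe.

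For step (iv), the ``if'' direction, you flag it as the ``delicate heart'' and propose to construct an explicit supporting affine function for $\psi$ on the whole simplex. That is substantially harder than what the paper does: it replaces $Q$ by $\hat Q$ obtained by zeroing out the rows and columns outside $U$, notes that the PSD hypothesis makes $\hat Q$ satisfy the pseudodefinite property of Theorem~\ref{l:Qmatrix}(c), and then chains the resulting identity $\hat w_*=\hat w$ with $w(\xvec)=\hat w(\xvec)$ and $w_*(\xvec)\ge\hat w_*(\xvec)$ to conclude $w_*(\xvec)=w(\xvec)$. This sidesteps the global verification $\ell\le\psi$ on $\Delta_{k-1}$ entirely. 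Without either the nesting lemma or the pseudodefinite-property reduction, your proposal does not close; with them, your convex-envelope language is a reasonable alternative presentation of the same argument.
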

In particular, Theorem \ref{thm:balanced} implies that $\varphi_*(W) = \varphi(W)$ if the matrix 
\begin{equation}\label{defQW}
Q(W):=\left(\log \dfrac{1}{1-p_{ij}}\right)_{i,j\in [k]}
\end{equation}
 is positive semidefinite on the subspace defined by $\sum_{ i \in [k]}y_i =0$. 
Below, we  consider two particular examples of this situation, for which we can give  explicit formulas for $\chi(\G)$.  
\begin{cor}\label{C:block1}
	Let $(S_i)_{i\in [k]}$ be a partition of $[0,1]$ into $k$ intervals of lengths
	$
	\ell_1 \geq \ell_2 \geq \cdots \geq \ell_k. 
	$
	Let $p\leq p_0$  be numbers from $(0,1)$  and  
	\[
	W(x,y) := 
	\begin{cases}
		p_0, & \text{if $x,y \in S_i$  for some $i \in [k]$,}\\
		p, & \text{otherwise.} 
	\end{cases}
	\]
	If   $\G\sim \calG(n,W)$, then whp
	\[
	\chi(\G) = \left(1+ o(1)\right) 
	\max_{i \in [k]}\left\{ \dfrac{\sum_{j  \in [i]} \ell_j^2}{\sum_{j \in [i]} \ell_j} 
	\log \dfrac{1-p}{1-p_0}+  \sum_{j \in [i]} \ell_j \log\dfrac {1}{1-p} \right\}\dfrac{n}{2\log n}.
	\]
\end{cor}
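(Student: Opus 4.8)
The proof will follow once we establish that $\varphi_*(W)=\varphi(W)$ and that
\[
\varphi(W)=\max_{i\in[k]}\Bigl\{\frac{\sum_{j\in[i]}\ell_j^2}{\sum_{j\in[i]}\ell_j}\log\frac{1-p}{1-p_0}+\sum_{j\in[i]}\ell_j\log\frac{1}{1-p}\Bigr\}.
\]
Granting these, the corollary is immediate: $W$ is a Riemann integrable $k$-block graphon with $0<p\le p_0<1$, so \eqref{eq:Winfsup} holds and Corollary~\ref{C:block} gives $\chi(\G)=(1+o(1))\,\varphi_*(W)\,\frac{n}{2\log n}$ whp, which is the claimed formula. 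For $\varphi_*(W)=\varphi(W)$ I would use the criterion recorded just after \eqref{defQW}: writing $a:=\log\frac{1}{1-p_0}$ and $b:=\log\frac{1}{1-p}$, the matrix in \eqref{defQW} equals $Q(W)=(a-b)\,I+b\,J$, where $J$ is the all-ones $k\times k$ matrix. Since $p\le p_0$ forces $a\ge b>0$, every $\yvec$ with $\sum_i y_i=0$ satisfies $\yvec\trans Q(W)\yvec=(a-b)\norm{\yvec}^2\ge 0$, so $Q(W)$ is positive semidefinite on $\{\yvec:\sum_i y_i=0\}$ and hence $\varphi_*(W)=\varphi(W)$.

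To compute $\varphi(W)$, note that since $W$ is constant on each $S_i\times S_j$, the quantity under the supremum in \eqref{def:var_uni} depends on a measurable set $S\subseteq[0,1]$ only through the numbers $x_i:=\lambda(S\cap S_i)\in[0,\ell_i]$, and every $\xvec\in\prod_{i\in[k]}[0,\ell_i]$ is realised by some $S$. Expanding the double integral gives
\[
\frac{1}{\lambda(S)}\int_S\int_S\log\frac{1}{1-W(x,y)}\,dx\,dy=(a-b)\,\frac{\sum_i x_i^2}{\sum_i x_i}+b\sum_i x_i=:g(\xvec),
\]
so that $\varphi(W)=\sup\{g(\xvec):\xvec\in\prod_{i\in[k]}[0,\ell_i],\ \xvec\neq\zerovec\}$, a finite-dimensional optimisation.

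I would solve this supremum in two steps. First, fixing $s:=\sum_i x_i$, the function $g(\xvec)=(a-b)s^{-1}\sum_i x_i^2+bs$ is increasing in $\sum_i x_i^2$, and among feasible vectors with this prescribed sum $\sum_i x_i^2$ is maximised by the ``greedy'' vector filling the largest blocks first, $x^\star_i(s):=\min\{\ell_i,\,(s-\sum_{j<i}\ell_j)_+\}$; this is because the decreasing rearrangement of any feasible $\xvec$ is again feasible (here $\ell_1\ge\cdots\ge\ell_k$ is used) and is majorised by $\xvec^\star(s)$, while $\xvec\mapsto\sum_i x_i^2$ is Schur-convex. Hence $\varphi(W)=\sup_{s\in(0,1]}g(\xvec^\star(s))$. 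Second, on the interval $s\in\bigl[\sum_{j<m}\ell_j,\sum_{j\le m}\ell_j\bigr]$ we have $\xvec^\star(s)=(\ell_1,\dots,\ell_{m-1},t,0,\dots,0)$ with $t:=s-\sum_{j<m}\ell_j$, so $g(\xvec^\star(s))=(a-b)\frac{A_m+t^2}{B_m+t}+b(B_m+t)$ with $A_m:=\sum_{j<m}\ell_j^2$, $B_m:=\sum_{j<m}\ell_j$; the identity $\frac{A_m+t^2}{B_m+t}=(t-B_m)+\frac{A_m+B_m^2}{B_m+t}$ shows that $s\mapsto g(\xvec^\star(s))$ is convex on the interval, so its maximum over $[0,1]$ is attained at a break-point $s=\sum_{j\in[i]}\ell_j$, $i\in[k]$ (the value at $s=0$ being $0$). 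At such a point $\xvec^\star(s)=(\ell_1,\dots,\ell_i,0,\dots,0)$, and using $a-b=\log\frac{1-p}{1-p_0}$ and $b=\log\frac{1}{1-p}$,
\[
g(\xvec^\star(s))=\frac{\sum_{j\in[i]}\ell_j^2}{\sum_{j\in[i]}\ell_j}\log\frac{1-p}{1-p_0}+\sum_{j\in[i]}\ell_j\log\frac{1}{1-p}.
\]
Maximising over $i\in[k]$ yields the asserted value of $\varphi(W)$, and the corollary follows.

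I expect the main obstacle to be the first step of the optimisation, namely reducing to sets of the form $S_1\cup\cdots\cup S_i$ — unions of the largest blocks. The delicate point is that feasibility of the decreasing rearrangement genuinely relies on the ordering $\ell_1\ge\cdots\ge\ell_k$ (otherwise sorting the coordinates can violate $x_i\le\ell_i$); once this is secured, majorisation together with Schur-convexity of $\xvec\mapsto\sum_i x_i^2$ finishes the step, and the elementary convexity identity rules out an optimum in the interior of a single block. The remaining ingredients — the reduction to a finite-dimensional problem, the semidefiniteness check, and the appeals to Theorem~\ref{thm:balanced} and Corollary~\ref{C:block} — are routine.
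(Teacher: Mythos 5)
Your argument is correct, and its high-level skeleton matches the paper's: use positive semidefiniteness of $Q(W)$ together with Theorem~\ref{thm:balanced} (via the remark after \eqref{defQW}) to conclude $\varphi_*(W)=\varphi(W)$, then compute $\varphi(W)$ by a finite-dimensional optimisation. Where you genuinely diverge from the paper is in how you solve that optimisation. The paper first invokes the corner-maximiser property of Theorem~\ref{l:Qmatrix}(b) to reduce the supremum in \eqref{def:var_uni} to vectors with $x_i\in\{0,\ell_i\}$ --- i.e.\ to subsets $U\subseteq[k]$ --- and then runs an exchange argument: for $a\notin U$ and $b\in U$ with $b>a$, the one-variable function
$f(x)=\dfrac{x^2+S_2}{x+S_1}\log\dfrac{1-p}{1-p_0}+(x+S_1)\log\dfrac{1}{1-p}$
is convex in $x$, so $f(\ell_b)\leq\max\{f(0),f(\ell_a)\}$, allowing $b$ to be deleted from $U$ or swapped for $a$ without loss, terminating at $U=[i]$. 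You instead keep the full box $\prod_i[0,\ell_i]$, fix the sum $s=\sum_i x_i$, and reduce to the greedy vector $\xvec^\star(s)$ via feasibility of the decreasing rearrangement (which does rely on $\ell_1\geq\cdots\geq\ell_k$), majorisation, and Schur-convexity of $\sum_i x_i^2$; you then observe that $s\mapsto g(\xvec^\star(s))$ is piecewise convex and hence maximised at a break-point $s=\sum_{j\in[i]}\ell_j$. Both arguments are convexity-driven and of comparable length; the paper's exchange argument is more elementary (no majorisation theory, and the reduction to finitely many candidates is outsourced to Theorem~\ref{l:Qmatrix}(b)), while yours bypasses the corner-maximiser property and more transparently identifies the shape of the optimiser. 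One small notational slip: in checking the subspace condition you write $\yvec\trans Q(W)\yvec=(a-b)\norm{\yvec}^2$, but the paper reserves $\norm{\cdot}$ for the $\ell^1$-norm; the identity should read $(a-b)\sum_i y_i^2$. This does not affect the nonnegativity conclusion.
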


\begin{cor}\label{C:block2}
	Let $(S_i)_{i\in [k]}$ be a partition of $[0,1]$ into $k$ intervals of equal length $1/k$. Let $p_1 \geq p_2 \geq \cdots \geq p_k$ be numbers from $(0,1)$, and $p \in [0,p_k]$, and 
	\[
	W(x,y) := 
	\begin{cases}
		p_i, & \text{if $x,y \in S_i$  for some $i \in [k]$,}\\
		p, & \text{otherwise.} 
	\end{cases}
	\]
	If   $\G\sim \calG(n,W)$, then whp
	\[	\chi(\G) = \left(1+ o(1)\right) 
	\max_{i \in [k]}\left\{    \dfrac{i-1}{k}\log\dfrac {1}{1-p}  +  \dfrac{1}{ik} \sum_{j \leq i}    \log\dfrac{1}{1-p_j}  \right\}\dfrac{n}{2\log n}.
	\]
\end{cor}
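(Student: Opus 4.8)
The plan is to combine Theorem~\ref{T:block} with an explicit evaluation of $\varphi_k(W)$. Since Theorem~\ref{T:block} already gives $\chi(\G)=(\varphi_k(W)+o(1))\frac{n}{2\log n}$ whp (the hypothesis is satisfied as $\max_i p_i<1$), the corollary reduces to proving
\[
\varphi_k(W)=\max_{i\in[k]}\Big\{\tfrac{i-1}{k}\log\tfrac{1}{1-p}+\tfrac{1}{ik}\sum\nolimits_{j\le i}\log\tfrac{1}{1-p_j}\Big\},
\]
together with the trivial observation that this quantity is $\Theta(1)$ (it lies between $\tfrac1k\log\tfrac1{1-p_1}$ and $\log\tfrac1{1-p_1}$), which lets us rewrite $(\varphi_k(W)+o(1))$ as $(1+o(1))\varphi_k(W)$. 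The argument splits into two independent parts: (i) showing that the ``balanced'' strategy is optimal here, i.e.\ $\varphi_k(W)=\varphi_1(W)=\varphi(W)$; and (ii) evaluating $\varphi(W)$ by a finite-dimensional optimisation.

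For part (i), the plan is to invoke the positive-semidefiniteness criterion stated right after Theorem~\ref{thm:balanced}. Write $b:=\log\frac1{1-p}\ge 0$ and $a_i:=\log\frac1{1-p_i}$, so that $a_i\ge b$ because $p\le p_k\le\cdots\le p_1$. Then the matrix from \eqref{defQW} is
\[
Q(W)=b\,\one\one\trans+\operatorname{diag}\bigl(a_1-b,\dots,a_k-b\bigr),
\]
a sum of two positive-semidefinite matrices, hence positive semidefinite, and in particular positive semidefinite on $\{\yvec\in\Reals^k:\sum_{i\in[k]}y_i=0\}$. By the remark following Theorem~\ref{thm:balanced} this gives $\varphi_*(W)=\varphi(W)$, and since $\varphi_*(W)\le\varphi_k(W)\le\varphi_1(W)=\varphi(W)$ we conclude $\varphi_k(W)=\varphi(W)$.

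For part (ii), I would parametrise a measurable $S\subseteq[0,1]$ by the vector $(s_1,\dots,s_k)$ with $s_i:=\lambda(S\cap S_i)\in[0,1/k]$, and compute directly that
\[
\frac{1}{\lambda(S)}\int_S\int_S\log\tfrac{1}{1-W(x,y)}\,dx\,dy=\frac{\sum_i c_i s_i^2}{\sum_i s_i}+b\sum_i s_i,\qquad c_i:=a_i-b\ge 0,
\]
where $c_1\ge c_2\ge\cdots\ge c_k$. Thus $\varphi(W)$ is the supremum of the right-hand side over $(s_i)\in[0,1/k]^k\setminus\{\zerovec\}$. The plan is to show the maximum is attained at $s=\tfrac1k\one_{\{1,\dots,m\}}$ for some $m\in[k]$ in two steps. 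First, for $\sum_i s_i$ fixed the numerator $\sum_i c_i s_i^2$ is convex on the polytope $\{0\le s_i\le 1/k,\ \sum_i s_i=t\}$, so it is maximised at a vertex, which has $s_i=1/k$ on an initial segment $\{1,\dots,m\}$ (because the $c_i$ are non-increasing), at most one fractional coordinate, and the rest zero. Second, fixing the first $m$ coordinates to $1/k$ and letting the $(m{+}1)$-st equal $\theta\in[0,1/k]$, the whole expression is convex in $\theta$ — after the substitution $u=m/k+\theta$ the first summand becomes $c_{m+1}u+\mathrm{const}/u+\mathrm{const}$ — so its maximum over $\theta$ occurs at an endpoint. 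Evaluating at $s=\tfrac1k\one_{\{1,\dots,m\}}$ yields $\tfrac{m-1}{k}b+\tfrac1{mk}\sum_{j\le m}a_j$, and taking the maximum over $m\in[k]$ gives the claimed formula; combining with part (i) and Theorem~\ref{T:block} completes the proof.

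The main obstacle I expect is part (ii): although it is ``just'' a convex-optimisation exercise, some care is needed to exclude sets $S$ that mix two or more fractional blocks, and to verify that the monotonicity $c_1\ge\cdots\ge c_k$ genuinely forces the optimal full blocks to be the densest ones $S_1,\dots,S_m$ rather than an arbitrary size-$m$ subset. Part (i) is routine once one notices that the hypothesis $p\le p_i$ is precisely what makes $Q(W)$ positive semidefinite.
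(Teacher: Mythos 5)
Your proof is correct and differs from the paper's mainly in part (ii). For part (i), you use the same observation as the paper: $Q(W)=b\one\one\trans+\operatorname{diag}(a_1-b,\dots,a_k-b)$ is a sum of two PSD matrices (the paper writes this as $\operatorname{diag}(\log\frac{1-p}{1-p_i})+J\log\frac{1}{1-p}$, which is identical), hence PSD, so the remark after Theorem~\ref{thm:balanced} gives $\varphi_*(W)=\varphi(W)$, and $\varphi_k(W)$ is sandwiched.

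Where you depart from the paper is in evaluating $\varphi(W)$. The paper cites the corner maximiser property, Theorem~\ref{l:Qmatrix}(b) (which comes from~\cite{IsaevKang-SBM}), together with Lemma~\ref{L:phi-relation}: this reduces the supremum over measurable $S\subseteq[0,1]$ directly to a maximum over subsets $U\subseteq[k]$, i.e.\ over corner vectors $\zvec$ with $z_i\in\{0,1/k\}$, in a single stroke. Then the only remaining work is the easy monotonicity argument that the optimal $U$ of size $i$ is $[i]$. You instead re-derive the corner property from first principles for this specific graphon: fixing $t=\sum s_i$, you argue the (convex) numerator is maximised at a vertex of the slice $\{0\le s_i\le 1/k,\ \sum s_i=t\}$; you identify the best vertex as the one supported on an initial segment (using $c_1\ge\cdots\ge c_k$ and $\theta^2<1/k^2$); and you then round the residual fractional coordinate to an endpoint via a one-variable convexity calculation. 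All three steps are sound, and the final evaluation $\frac{m-1}{k}b+\frac{1}{mk}\sum_{j\le m}a_j$ agrees with the claimed formula. Your approach is more self-contained but longer; the paper's is shorter because it piggybacks on the pre-packaged corner maximiser lemma, which you could also simply invoke to skip the convexity work.
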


We prove Theorem \ref{thm:balanced} and Corollaries \ref{C:block1} and~\ref{C:block2} in Section \ref{S:balanced}.

	\subsection{Approximations with block graphons} 
 
A graphon $W$ is \emph{block-regulated} if and only if 
	for any $\delta>0$ there exists a  block  graphon $W^{\delta}$ such that 
	\[
  \sup\limits_{x,y \in [0,1]}|W(x,y) - W^{\delta}(x,y)| \leq \delta.
	\]
	The notion of regulated functions is well known for the  case of single variable and is equivalent to the existence of one-sided limits.  The class of block-regulated graphons defined above is a special case of the generalisation of this notion to $\Reals^d$ proposed by Davison \cite{Davison1979}.  In particular, one can show that any continuous graphon $W$ is block-regulated.

	\begin{thm}\label{T:block-regulated}
		Conjecture \ref{Con:main} holds if $W$ is   block-regulated. 
	\end{thm}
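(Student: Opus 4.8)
The plan is to deduce Theorem~\ref{T:block-regulated} from the block-graphon case, Corollary~\ref{C:block}, by sandwiching $W$ between two block graphons in $\mathcal{L}^{\infty}$-norm and then invoking the stability of $\varphi_*$ under uniform perturbations of the graphon. Assume $W$ satisfies \eqref{eq:Winfsup} and set $c:=\tfrac12\bigl(1-\sup_{x,y}W(x,y)\bigr)>0$, so that $0\le W\le 1-c$; by \eqref{eq:Winfsup} and \eqref{W-trivial}, $\varphi_*(W)=\Theta(1)$. Fix $\epsilon\in(0,1)$; it suffices to prove that whp $(1-\epsilon)\varphi_*(W)\tfrac n{2\log n}\le\chi(\G)\le(1+\epsilon)\varphi_*(W)\tfrac n{2\log n}$.

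Choose $\delta\in\bigl(0,\tfrac12\min\{c,\inf_{x,y}W(x,y)\}\bigr)$ (to be fixed later) and a block graphon $W^{\delta}$ with $\|W-W^{\delta}\|_{\infty}\le\delta$, and set $\underline W:=W^{\delta}-\delta$ and $\overline W:=W^{\delta}+\delta$. These are finite block graphons; the choice of $\delta$ guarantees $0<\inf\underline W$ and $\sup\overline W\le\sup W+2\delta<1-c$, so in particular both satisfy \eqref{eq:Winfsup} and take values in $[0,1-c]$, and $\underline W\le W\le\overline W$ pointwise. Couple $\G\sim\calG(n,W)$ with $\G'\sim\calG(n,\underline W)$ and $\G''\sim\calG(n,\overline W)$ on a common probability space using the same uniform points $X_1,\dots,X_n$ and the same i.i.d.\ uniform thresholds $(U_{ij})$, putting the edge $ij$ into $\G'$, $\G$, resp.\ $\G''$ iff $U_{ij}$ does not exceed $\underline W(X_i,X_j)$, $W(X_i,X_j)$, resp.\ $\overline W(X_i,X_j)$; then $\G'\subseteq\G\subseteq\G''$ on the common vertex set $[n]$, so $\chi(\G')\le\chi(\G)\le\chi(\G'')$ deterministically. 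By Corollary~\ref{C:block} applied to the block graphons $\underline W$ and $\overline W$, whp $\chi(\G')=(1+o(1))\varphi_*(\underline W)\tfrac n{2\log n}$ and $\chi(\G'')=(1+o(1))\varphi_*(\overline W)\tfrac n{2\log n}$.

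It remains to bound $\varphi_*(\underline W)$ and $\varphi_*(\overline W)$ in terms of $\varphi_*(W)$. On $[0,1-c]$ the function $t\mapsto\log\tfrac1{1-t}$ has derivative $\tfrac1{1-t}\le\tfrac1c$, and $|W-\underline W|,|W-\overline W|\le2\delta$, so the mean value theorem gives $\bigl|\log\tfrac1{1-W(x,y)}-\log\tfrac1{1-V(x,y)}\bigr|\le\tfrac{2\delta}{c}$ for all $x,y$ and each $V\in\{\underline W,\overline W\}$. Hence for every $\mu\in\calP([0,1])$ and every measurable $S$ with $\mu(S)>0$ the two integrands in \eqref{def:var} differ by at most $\tfrac{2\delta}{c}$, so the corresponding values of $\tfrac1{\mu(S)}\int_S\int_S(\cdot)\,d\mu\,d\mu$ differ by at most $\mu(S)\tfrac{2\delta}{c}\le\tfrac{2\delta}{c}$; taking the supremum over $S$ gives $|\varphi(\mu,W)-\varphi(\mu,V)|\le\tfrac{2\delta}{c}$, and averaging with any $(\calM,\alphavec)\in\calC_{\lambda}$ and taking the infimum in \eqref{def:var-star} yields $|\varphi_*(W)-\varphi_*(V)|\le\tfrac{2\delta}{c}$.

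Now fix $\delta$ small enough that $\tfrac{2\delta}{c}\le\tfrac{\epsilon}{3}\varphi_*(W)$ (possible since $\varphi_*(W)=\Theta(1)$); then $\varphi_*(\underline W)\ge(1-\tfrac\epsilon3)\varphi_*(W)$ and $\varphi_*(\overline W)\le(1+\tfrac\epsilon3)\varphi_*(W)$. Applying the two whp estimates of the second paragraph with error parameter $\tfrac\epsilon3$ and using $\epsilon\le1$, we get whp $(1-\epsilon)\varphi_*(W)\tfrac n{2\log n}\le\chi(\G')\le\chi(\G)\le\chi(\G'')\le(1+\epsilon)\varphi_*(W)\tfrac n{2\log n}$, which is the desired statement. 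I expect no genuine obstacle here: once Corollary~\ref{C:block} is available the argument is routine, the only mildly delicate points being that $\underline W$ and $\overline W$ must be honest block graphons with values bounded away from $0$ and $1$ (which is what fixes the admissible range of $\delta$) and the $\mathcal{L}^{\infty}$-continuity of $\varphi_*$ on graphons bounded away from $1$, i.e.\ the short mean-value-theorem estimate above. (Alternatively, the upper bound $\chi(\G)\le(1+o(1))\varphi_*(W)\tfrac n{2\log n}$ is immediate from Corollary~\ref{Cor:upper}, since a block-regulated graphon is Riemann integrable.)
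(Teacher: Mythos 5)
Your proof is correct and follows essentially the same route as the paper: sandwich $W$ between two block graphons $\underline W \le W \le \overline W$ within $2\delta$ in $\mathcal{L}^{\infty}$, couple the corresponding random graphs, apply the block-graphon result (the paper cites Theorem~\ref{T:sequences1}, you cite its consequence Corollary~\ref{C:block}), and then use $\mathcal{L}^{\infty}$-continuity of $\varphi_*$ to pass back to $W$. The only cosmetic differences are that the paper obtains the continuity estimate by invoking Theorem~\ref{T:norm}(c) rather than rederiving it via the mean value theorem as you do, and that the paper enforces $\underline W \ge 0$ and $\overline W \le 1-\eps$ by truncation ($W^{\delta}_L := \max\{W^{\delta}-\delta,0\}$, $W^{\delta}_U := \min\{W^{\delta}+\delta,1-\eps\}$) rather than by shrinking $\delta$ as you do; both devices are fine.
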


	We prove Theorem \ref{T:block-regulated} in Section \ref{S:b-regulated}. 
	
	\begin{remark}
	In this paper we consider approximations by  block graphons in $\calL^\infty$-norm.  
	It is tempting to believe that a similar approach should  work for arbitrary graphons. For example, it is well known that any graphon can be approximated by block graphons
	in a weaker norm, known as the cut-norm; see,  for example,    Lov\'asz \cite[Section 9.2]{Lovasz2012}.  
	The complication is that  the chromatic number is quite sensitive with respect to the  cut-norm. For example,  a planted clique of sublinear size can change the chromatic number dramatically, while the graph limit remains the same.
	\end{remark}

	  To go beyond step-regulated graphons, 
	  we consider two graphons $W_L$ and $W_R$ displayed in Figure \ref{f:non-block} that are defined by
	  \[
	      W_L(x,y):= \begin{cases}
	      							\frac34 &\text{if  $x+y \leq 1$,}\\
	      							\frac12 &\text{otherwise};
	      					\end{cases}
	\qquad \quad
		 W_R(x,y):= \begin{cases}
			\frac34 &\text{if $x,y  \leq \frac12$,}\\
			\frac12 &\text{otherwise}.
		\end{cases}
	\]
	  
	  \begin{figure}[h!]
	  		\begin{minipage}[l]{0.45\textwidth}
	  		\flushright
	  		\begin{tikzpicture}
	  			\begin{scope}[scale=1]

	  				\filldraw[draw=black, fill=gray!80!white] (4,2) -- (0,2) -- (0,-2)-- cycle;
	  				\filldraw[draw=black, fill=lightgray!50!white] (4,2) -- (4,-2) -- (0,-2)-- cycle;
	  				
	  				\node   at (1.2,0.8) {$\dfrac34$};
	  				
	  				\node   at (2.8,-0.8) {$\dfrac12$};
	  				%
	  				%
	  				%
	  				
	  			\end{scope}
	  		\end{tikzpicture}
	  	\end{minipage} 
	  	\begin{minipage}[l]{0.4\textwidth}
	  		\centering
	  		\begin{tikzpicture}
	  			\begin{scope}[scale=1]
	  				
	  				\draw [fill=gray!80!white] (0,0) rectangle (2,2);
	  				\node   at (1,1) {$\dfrac34$};
	  				\draw [fill=lightgray!50!white] (2,0) rectangle (4,2);
	  				\node  at (3,1) {$\dfrac12$};
	  				\draw [fill=lightgray!50!white] (2,0) rectangle (0,-2);
	  				\node  at (1,-1) {$\dfrac12$};
	  				\draw [fill=lightgray!50!white] (2,0) rectangle (4,-2);
	  				\node   at (3,-1) {$\dfrac12$};

	  			\end{scope}
	  		\end{tikzpicture}
	  	\end{minipage}
	  	\caption{ Graphons $W_L$ and $W_R$ with  the chromatic number  $\left(\dfrac{5}{8}\log 2+o(1)\right) \dfrac{n}{\log n}$. }
	  	\label{f:non-block} 
	  \end{figure}

	  The left graphon $W_L$ is one of   the simplest examples   of a graphon which is not block-regulated.   
	 Thus,  Theorem \ref{T:block-regulated} does not apply.
	  Nevertheless,   it is not difficult to show that Conjecture~\ref{Con:main} holds for this graphon.   
	   Since $W_L \geq W_R$ there is a coupling $(G_L,G_R)$
	   such that $\G_L \sim \M(n,W_L)$,  $\G_R \sim \M(n,W_R)$, and $\G_R \subseteq \G_L$. Therefore, $\chi(\G_L) \geq \chi(\G_R)$. Combining Corollary \ref{C:block} and Theorem \ref{C:main2}, we conclude that 
	  if 
	   \begin{equation}\label{W-LR}
	   			\varphi_*(W_L) \leq \varphi_*(W_R)  
	   \end{equation}
	   then $\chi(\G_L)$ is asymptotically equivalent to $\chi(\G_R)$ and 
	   Conjecture~\ref{Con:main} holds for $W_L$.
	   
	   From Theorem \ref{thm:balanced} and any of its corollaries, we know that
	   \[
	       \varphi_*(W_R)   = \varphi(W)= \dfrac{3}{4}\log 2+\dfrac{1}{4} \log 4 = \dfrac{5}{4}\log 2. 
	   \]
	   For any positive integer $k$ we can consider the colouring strategy corresponding to the following representation  of the uniform measure as a convex combination 
	   \[
	   		\lambda := \sum_{i \in [k]} \dfrac{2}{2k+1} \mu_i +  \dfrac{1}{2k+1} \mu_{k+1}, 
	   \]
	   where $\mu_i$ is the uniform measure on $\left[\frac{i-1}{2k+1} ,  \dfrac{i}{2k+1}\right) \cup\left[\frac{2k-i}{2k+1} ,  \dfrac{2k+1 - i}{2k+1}\right)$ for $i \in [k]$ and $\mu_{k+1}$ is the uniform measure on $\left[\frac{2k}{2k+1} ,  1\right]$. Observing that $W_L$ restricted to  the support of $\mu_i$ is equivalent to $W_R$, we get that $\varphi(\mu_i,W_L) = \varphi(W_R)$ for each $i \in [k]$. Therefore,
	   \[
	   			\varphi_*(W_L) \leq \varphi_{k+1}(W_L) \leq 
	   			\sum_{ i \in[k]} \dfrac{2}{2k+1} \varphi(W_R)  + \dfrac{1}{2k+1} \varphi(\mu_{k+1},W_L)=
	   			 \varphi(W_R) +O(k^{-1}).
	   \]
	   Since we can take $k$ to be arbitrary large, we get \eqref{W-LR}.

	   In the example above, we observed that 
	    a colouring strategy involving at most $k$ different types of colours approximates the optimal strategy  up to  $O(k^{-1}) \frac{n}{\log n}$ colours. Our final theorem generalises this observation for  the following two classes of graphons.
	A graphon  $W$ is  \emph{block-increasing}  if there exists a finite family $\calF$   of   disjoint intervals partitioning $[0,1]$  such that  $W$   increases with respect to both coordinates  on $S \times S'$   for any $S,S' \in \calF$. 
	Similarly,  $W$ is  \emph{block-Lipschitz} if   it is a Lipschitz function within each block 
	 $S \times S'$.

	\begin{thm}\label{Thm:con-inc}
		Assume that $W$ is block-increasing or block-Lipschitz. Then, there exists a constant $c = c(W)>0$ such that for 
		any fixed $k\in \Naturals$, whp
		\[
		\chi(\G) \geq \left(\varphi_k(W)  - c/k\right) \dfrac{n}{2\log n},  \qquad \text{where $\G\sim \calG(n,W)$.}
		\]
	\end{thm}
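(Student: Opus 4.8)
\section*{Proof proposal for Theorem~\ref{Thm:con-inc}}

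The plan is to derive the lower bound from the block‑graphon case (Theorem~\ref{T:block}) by sandwiching $W$ from below by a block graphon and then transferring the estimate to $W$ analytically. Throughout we may assume $\sup_{x,y}W(x,y)<1$ (otherwise $\varphi_k(W)=\infty$ and the statement is vacuous) and, where convenient, that \eqref{eq:Winfsup} holds, i.e.\ also $\inf_{x,y}W(x,y)>0$; the regions on which $W$ is near $0$ contribute only $O(1)$ colours and are harmless. Fix a finite partition of $[0,1]$ into intervals refining the structural blocks of $W$, with $N=N(k,W)$ cells, and let $\hat W$ be the block graphon whose value on each product cell $J\times J'$ is $\inf_{J\times J'}W$. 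Then $\hat W\le W$ pointwise, so there is a coupling of $\hat\G\sim\calG(n,\hat W)$ and $\G\sim\calG(n,W)$ with $\hat\G\subseteq\G$, whence $\chi(\G)\ge\chi(\hat\G)$. Since $\hat W$ is an $N$‑block graphon, Theorem~\ref{T:block} gives, whp, $\chi(\hat\G)=(\varphi_N(\hat W)+o(1))\frac{n}{2\log n}=(\varphi_*(\hat W)+o(1))\frac{n}{2\log n}$ (recall $\varphi_N(\hat W)=\varphi_*(\hat W)$ for an $N$‑block graphon, as in the proof of Corollary~\ref{C:block}). Thus it suffices to choose the partition, with $N$ depending only on $k$ and $W$, so that
\[
\varphi_*(\hat W)\ \ge\ \varphi_k(W)-\frac{c(W)}{k},
\]
and then absorb the $o(1)$ into $c$ (for each fixed $k$ the $o(1)\to 0$, and the finitely many small $k$ cause no trouble as $\varphi_k(W)=O(1)$ by \eqref{W-trivial}).

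\emph{The block‑Lipschitz case.} Take the partition into $N=k$ intervals of length $O(1/k)$; each cell then has side $\le c_0/k$, so $\|W-\hat W\|_\infty\le c_1(W)/k$. Since $t\mapsto\log\frac1{1-t}$ is Lipschitz on $[0,\sup W]$, for every measure $\mu$ and every $S$ we have $\frac1{\mu(S)}\int_{S\times S}\bigl|\log\frac1{1-W}-\log\frac1{1-\hat W}\bigr|\,d\mu\,d\mu\le\frac{\|W-\hat W\|_\infty}{1-\sup W}$, and hence $\varphi_k(\cdot)$ from \eqref{def:var-k} depends $O(\|\cdot\|_\infty)$‑Lipschitzly on the graphon. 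Therefore $\varphi_k(\hat W)\ge\varphi_k(W)-c_2(W)/k$, and as $\hat W$ is a $k$‑block graphon, $\varphi_*(\hat W)=\varphi_k(\hat W)$. This finishes the block‑Lipschitz case.

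\emph{The block‑increasing case.} Here $\hat W$ is no longer uniformly close to $W$: near a ``diagonal'' discontinuity of $W$ (e.g.\ the anti‑diagonal of $W_L$) a staircase of $\Theta(N)$ cells each carries an $\Omega(1)$ error, although the total $L^1$‑error is only $O(1/N)$; moreover, by first including the (finitely many) large jump points of $W$ among the partition endpoints we may arrange that $\hat W$ and $W$ have the \emph{same} infimum on every diagonal cell, with within‑cell oscillation of $W$ at most $1/k$ there. The structural input that makes the residual $L^1$‑control enough is monotonicity: by a stochastic‑domination (rearrangement) argument, for every $\mu$ the supremum defining $\varphi(\mu,W)$ in \eqref{def:var} is attained at a set $S^\star$ which is an \emph{up‑set} — a union, one per structural block, of the right‑hand end of that block — so $S^\star\times S^\star$ is a union of ``upper'' rectangles on which $W$ is coordinatewise monotone with total oscillation $O(1)$ per block. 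The telescoping identity $\sum_{J,J'}|J|\,|J'|\,\operatorname{osc}_{J\times J'}\!\bigl(\log\tfrac1{1-W}\bigr)=O(1/N)$ then bounds the loss incurred by passing to $\hat W$ on $S^\star\times S^\star$ (together with the error of rounding $S^\star$ to a union of cells) by $O\!\bigl(D(\mu)^2/(N\,\varphi(\mu,W))\bigr)$, where $D(\mu)$ is the $L^\infty$‑density of $\mu$ with respect to $\lambda$ and we used $\mu(S^\star)\ge\varphi(\mu,W)/\log\frac1{1-\sup W}$. Feeding this into $\varphi_*(\hat W)\ge\varphi_*(W)-\sup_{\xi}\mathbb E\bigl[\varphi(\mu^\xi,W)-\varphi(\mu^\xi,\hat W)\bigr]$, and combining with the analogous (similarly proved, using the same rearrangement) analytic estimate $\varphi_k(W)-\varphi_*(W)=O(1/k)$ valid for block‑increasing $W$ — in a near‑optimal $\le k$‑type representation of $\lambda$ the types with $\alpha_\mu<1/k^2$ or with $\varphi(\mu,W)<1/k$ together contribute only $O(1/k)$ to $\varphi_k(W)$ and may be discarded, while any surviving type has $D(\mu)\le k^2$ and $\varphi(\mu,W)\ge1/k$ — one obtains, with $N$ a fixed polynomial in $k$, $\tfrac1{1-\sup W}$ and the number of structural blocks $r$, the desired $\varphi_*(\hat W)\ge\varphi_k(W)-c(W)/k$.

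\emph{Main obstacle.} The delicate point is the block‑increasing estimate of the previous paragraph: one must show that the optimal colouring strategy cannot profitably exploit the thin regions where the inf‑block approximation is poor. This is exactly where one‑dimensionality and monotonicity are essential — they force the extremal set $S^\star$ in \eqref{def:var} to be an up‑interval (per block), so that $S^\star\times S^\star$ meets any ``bad'' off‑diagonal staircase only in a corner and, after both the supremum over $S$ and the expectation over the strategy are taken, the error is still of order $1/N$. Making the defect bound uniform — in particular for measures $\mu$ that are strongly concentrated, where $D(\mu)$ is large — and controlling it only on the measures that actually appear in near‑optimal $\le k$‑type strategies (discarding the light and the $\varphi$‑small types) is the technical heart of the argument; everything else is routine given Theorem~\ref{T:block}.
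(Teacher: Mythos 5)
Your overall scaffold — approximate $W$ from below by a block graphon, apply Theorem~\ref{T:block}, and transfer the estimate back to $W$ by an analytic stability argument — is the same as the paper's, and your block-Lipschitz case is essentially identical to the paper's (take the $k$-block infimum approximation, note that $\lVert W-\hat W\rVert_\infty=O(1/k)$, apply Theorem~\ref{T:norm}(c) with $S_\delta=\emptyset$). The block-increasing case is where you diverge, and there your argument has a genuine gap.

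You replace the paper's key step with a chain of the form
\[
\varphi_*(\hat W)\ \ge\ \varphi_*(W)-\sup_\xi \E\bigl[\varphi(\mu^\xi,W)-\varphi(\mu^\xi,\hat W)\bigr]
\quad\text{and}\quad
\varphi_k(W)-\varphi_*(W)=O(1/k),
\]
bounding the defects by combining a rearrangement lemma (the extremal $S$ in \eqref{def:var} is a per-block up-set), a telescoping $L^1$-bound on $\lVert W-\hat W\rVert$, and a ``discard light/small types'' step that caps $D(\mu)$ on surviving types. The trouble is twofold. First, the supremum in the displayed inequality is over \emph{all} $\xi$ (or, after passing to the minimiser $\xi_0$ for $\hat W$, over the optimal $\xi_0$ for the $N$-block graphon $\hat W$), whose supporting measures you cannot control: they may have density of order $N$, and then the $L^1$-bound $\lVert W-\hat W\rVert_{L^1}=O(1/N)$ does not decay fast enough after multiplying by $D(\mu)^2/\mu(S^\star)$. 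Your discarding step controls only near-optimal $\le k$-type representations, which gives (at best) $\varphi_k(\hat W)\ge\varphi_k(W)-O(1/k)$ — it does not control $\varphi_*(\hat W)=\varphi_N(\hat W)$. Second, the auxiliary estimate $\varphi_k(W)-\varphi_*(W)=O(1/k)$ is exactly the analytic content that must be proven (it is equivalent to the theorem once the upper bound from Theorem~\ref{C:main2} is in hand), and you assert it by the "same rearrangement" without a proof; your own closing paragraph concedes this is "the technical heart". Choosing $N$ to be a large polynomial in $k$ cannot repair this, because inflating $N$ widens the gap between $\varphi_k(\hat W)$ and $\varphi_N(\hat W)=\varphi_*(\hat W)$.

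The paper avoids all of this by taking \emph{exactly} $N=k$ blocks, so that $\varphi_*(W_k^-)=\varphi_k(W_k^-)$ is immediate from Theorem~\ref{T:block}/Corollary~\ref{C:block}, and by replacing the rearrangement/$L^1$ machinery with a cyclic-shift trick: the graphon $\hat W_k^-(x,y):=W_k^-(x+\tfrac1k,y+\tfrac1k)$ (mod $1$) satisfies $\varphi_k(\hat W_k^-)=\varphi_k(W_k^-)$ by measure-preservation, and monotonicity of $W$ gives $\hat W_k^-\ge W_k^+$ on $[0,\tfrac{k-1}{k})^2$, so one application of Theorem~\ref{T:norm}(c) with $S_\delta=[\tfrac{k-1}{k},1]$, $\delta=0$ yields $\varphi_k(W_k^-)\ge\varphi_k(W_k^+)-O(1/k)\ge\varphi_k(W)-O(1/k)$. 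That one-line domination is what your argument is missing; without it (or a proof of the two displayed analytic claims) the lower bound does not close.
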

	We prove Theorem \ref{Thm:con-inc} in Section \ref{S:b-cont}.
	Theorems \ref{C:main2} and  \ref{Thm:con-inc} immediately imply the following corollary.
	\begin{cor}
		Conjecture \ref{Con:main} holds if $W$ is  block-increasing.
	\end{cor}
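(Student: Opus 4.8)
The plan is to bound $\chi(\G)$ from below by the chromatic number of a block model dominated by $\G$ and then invoke Theorem~\ref{T:block}. Fix $k\in\Naturals$ and let $\calF$ be the partition (into $r:=|\calF|$ intervals) witnessing that $W$ is block-increasing, resp.\ block-Lipschitz; the bound being trivial for $k<r$, assume $k\ge r$ and subdivide each block of $\calF$ into $\lfloor k/r\rfloor$ consecutive subintervals of equal length, obtaining a partition $\calT=\{T_1,\dots,T_K\}$ with $K\le k$ intervals. Let
\[
W_k^- := \sum_{i,j\in[K]}\Bigl(\,\inf_{(x,y)\in T_i\times T_j}W(x,y)\Bigr)\,\one_{T_i\times T_j},
\]
a $K$-block graphon with $W_k^-\le W$ pointwise. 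Since $W_k^-\le W$, there is a coupling of $\G\sim\M(n,W)$ with $\G_k\sim\calG(n,W_k^-)$ such that $\G_k\subseteq\G$, whence $\chi(\G)\ge\chi(\G_k)$; and by Theorem~\ref{T:block} applied to $W_k^-$, whp $\chi(\G_k)=(\varphi_K(W_k^-)+o(1))\frac{n}{2\log n}$. Because $W_k^-$ is a $K$-block graphon, $\varphi_K(W_k^-)=\varphi_*(W_k^-)$ (as established in the proof of Corollary~\ref{C:block}), and then $\varphi_j(W_k^-)=\varphi_*(W_k^-)$ for every $j\ge K$; in particular $\varphi_K(W_k^-)=\varphi_k(W_k^-)$ since $k\ge K$. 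Thus it suffices to prove the deterministic inequality
\begin{equation}\label{plan:key}
\varphi_k(W)\ \le\ \varphi_k(W_k^-)+c/k \qquad\text{for some constant }c=c(W),
\end{equation}
after which $\chi(\G)\ge\chi(\G_k)=(\varphi_k(W_k^-)+o(1))\frac{n}{2\log n}\ge(\varphi_k(W)-c/k+o(1))\frac{n}{2\log n}$ whp, as required.

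In the block-Lipschitz case \eqref{plan:key} is immediate. If $W$ has Lipschitz constant $L$ inside each block, then each $T_i\times T_j$ lying in a single block of $\calF\times\calF$ has $W$-oscillation $O_W(1/k)$, so $\Norm{W-W_k^-}_\infty=O_W(1/k)$; since $\sup_{x,y}W(x,y)<1$ (which we may assume, as otherwise $\varphi_k(W)=\infty$), writing $q:=\log\frac1{1-W}$ and $q_k^-:=\log\frac1{1-W_k^-}$ gives $\Norm{q-q_k^-}_\infty=O_W(1/k)$. Each of $\varphi(\mu,W)$ and $\varphi(\mu,W_k^-)$ is a supremum over measurable $S$ of $\mu(S)^{-1}\int_S\int_S(\cdot)\,d\mu\,d\mu$, so $|\varphi(\mu,W)-\varphi(\mu,W_k^-)|\le\Norm{q-q_k^-}_\infty=O_W(1/k)$ uniformly over $\mu\in\calP([0,1])$. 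Applying this to every component of an optimal representation in the definition~\eqref{def:var-k} of $\varphi_k(W_k^-)$ yields \eqref{plan:key}.

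For the block-increasing case $\Norm{W-W_k^-}_\infty$ need not be small, because $W$ may have jumps along finitely many lines that are not grid-aligned, and on $O_W(1/k)$-neighbourhoods of these lines $q-q_k^-$ is $\Theta(1)$. Monotonicity is used to compensate: on each block pair $S\times S'$ of $\calF\times\calF$ the $W$-oscillations over the subrectangles $T_i\times T_j$ telescope in each coordinate, so $\sum_{i,j}\lambda(T_i)\lambda(T_j)\,\mathrm{osc}_{T_i\times T_j}(W)=O_W(1/k)$ and hence $\int_{[0,1]^2}(q-q_k^-)=O_W(1/k)$; more precisely, for fixed $\eps>0$ the ``bad set'' $B_k:=\{q-q_k^->\eps\}$ is contained in a union of $O_W(1/\eps)$ grid rows and columns, each of $\lambda$-width $O(1/k)$, so $\lambda(B_k)=O_W(1/(k\eps))$. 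For a component $\mu$ of an optimal representation of $\varphi_k(W_k^-)$ one then splits the integral defining $\varphi(\mu,W)$ over $B_k$ and its complement to get $\varphi(\mu,W)-\varphi(\mu,W_k^-)\le \eps+O_W(1)\cdot(\text{a quantity measuring the mass }\mu\text{ places near }B_k)$; choosing the representation among those whose components do not concentrate on the (measure-zero) set of discontinuity lines — which one may do because a colour-class type concentrated there is costly, so such a type can be spread into neighbouring intervals at a small cost without violating $\sum\alpha_\mu\mu=\lambda$ — makes that quantity $O_W(1/k)$, and optimising $\eps$ gives \eqref{plan:key}.

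The main obstacle is precisely this last step of the block-increasing case: controlling $\varphi(\mu,W)-\varphi(\mu,W_k^-)$ uniformly over the types $\mu$ that actually occur in a (near-)optimal representation, since for non-Lipschitz $W$ this is no longer an $\Norm{\cdot}_\infty$-bound. Everything else — the coupling, the appeal to Theorem~\ref{T:block}, and the block-Lipschitz half — I expect to be routine; the real content lies in showing that near-optimal colouring strategies cannot be supported on the thin neighbourhoods of the discontinuity lines, for which monotonicity (through the telescoping bound on $\lambda(B_k)$ and the resulting structure of the cheap types of $W_k^-$) is the essential ingredient.
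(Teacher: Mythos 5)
Your framework is the right one and matches the paper: couple $\G$ with $\G_k\sim\calG(n,W_k^-)$ via $W_k^-\le W$, apply Theorem~\ref{T:block} to $\G_k$, and reduce the lower bound to the deterministic estimate $\varphi_k(W)\le\varphi_k(W_k^-)+O(1/k)$. Your block-Lipschitz half is also correct and coincides with the paper's: one has $\|W-W_k^-\|_\infty=O(1/k)$, so Theorem~\ref{T:norm}(c) with $S_\delta=\emptyset$ applies directly.

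The block-increasing half, however, has a genuine gap which you flag yourself. You want to argue that near-optimal representations $(\calM,\alphavec)\in\calC_\lambda$ for $\varphi_k(W_k^-)$ can be modified at $O(1/k)$ cost so that their component measures place only $O(1/k)$ mass on the thin bad set $B_k$. This is not established, and proving it would require a structural analysis of near-optimizers of $\varphi_k(W_k^-)$ that neither you nor the paper carries out. In particular, the assertion that a type concentrated near $B_k$ ``can be spread into neighbouring intervals at small cost without violating $\sum\alpha_\mu\mu=\lambda$'' is the crux of your argument and is left unproved; a priori, the constraint $\sum\alpha_\mu\mu=\lambda$ couples all the types together, so locally modifying one type forces compensating modifications of the others whose cost you do not control.

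The paper's proof of Theorem~\ref{Thm:con-inc} avoids any reference to optimizers by a shift-and-dominate trick that is specific to monotone $W$. Define $\hat W_k^-(x,y):=W_k^-\bigl(x+\tfrac1k\bmod 1,\;y+\tfrac1k\bmod 1\bigr)$; this is $W_k^-$ composed with a measure-preserving bijection of $[0,1]$, so $\varphi_k(\hat W_k^-)=\varphi_k(W_k^-)$. On $[0,\tfrac{k-1}{k})^2$, $\hat W_k^-$ equals the infimum of $W$ over a block shifted one step up-and-right, while $W_k^+$ is the supremum of $W$ over the unshifted block; by coordinatewise monotonicity, $\hat W_k^-\ge W_k^+$ there. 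Hence $\varphi_k(W_k^-)=\varphi_k(\hat W_k^-)\ge\varphi_k\bigl(\one_{[0,\frac{k-1}{k})^2}W_k^+\bigr)$, and Theorem~\ref{T:norm}(c) with $S_\delta=[\tfrac{k-1}{k},1]$ of measure $1/k$ gives $\varphi_k\bigl(\one_{[0,\frac{k-1}{k})^2}W_k^+\bigr)\ge\varphi_k(W_k^+)-O(1/k)\ge\varphi_k(W)-O(1/k)$, which is exactly your target inequality. This shift argument is the idea your proposal is missing; once it is in place, the corollary follows by sending $k\to\infty$ and pairing with the upper bound of Theorem~\ref{C:main2} as in the paper.
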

	\begin{proof}
		Note that any   block-increasing graphon is Riemann integrable.
		Thus, we can use the upper bound from Theorem \ref{T:New1}. 
		Recalling $\varphi_k(W) \geq \varphi_*(W)$ and using Theorem~\ref{Thm:con-inc}, we get that whp
		\[
		(\varphi_*(W)+o(1))\dfrac{n}{2\log n} \geq  \chi(\G) \geq \left(\varphi_*(W)  - c/k\right) \dfrac{n}{2\log n}.
		\]
		Since we can take $k$ arbitrarily large, we conclude that
		$\chi(\G) =  (\varphi_*(W)+o(1))\dfrac{n}{2\log n} $ whp.
		To complete the proof, we show $\varphi_*(W) =\Omega(1)$ by repeating the arguments of Corollary \ref{Cor:upper}. 
	\end{proof}

 There is no need to state a similar result for block-Lipschitz graphons, since they are block-regulated, so this case is covered by Theorem \ref{T:block-regulated}. Beyond that, by a slight tweak of our arguments, one can show that  Conjecture \ref{Con:main} holds for a sum of block-increasing and block-regulated graphons. Also, one can apply any measure preserving transformation to further extend the class of graphons for which Conjecture \ref{Con:main} holds: for example, we can get any block-decreasing graphon by considering  $W'(x,y):=W(1-x,1-y)$. However, these tricks are not sufficient for the general case. In particular, we have no way to check Conjecture \ref{Con:main} for $W(x,y) = \frac{1}{10}+\frac12 \sin^2 \frac{1}{|x+y-1|} $ (set to be $\frac{1}{10}$  for $x+y=1$) or even justify  the existence of the limit  in \eqref{question}.

	\section{Block graphons and the stochastic block model}\label{S:SBM}
	

In addition to $\M(n, W)$, we consider another family of random graphs associated with a graphon  $W:[0,1]^2 \rightarrow [0,1]$. Given a vector $\uvec = (u_1,\ldots,u_n)^T \in [0,1]^n$,
we generate a random graph $\G^{\uvec}\sim \M(\uvec, W)$  
with vertex set $[n]$  by including  
edges $ij$,  for all  $1 \leq  i \neq j  \leq n$,   independently with probabilities $W(u_i,u_j)$.
In particular, if   the components of  $\X(n) = (X_1, \ldots, X_n)^T \in [0,1]^n$ are sampled independently uniformly at random    then $\G^{\X(n)} \sim \M(n,  W)$.

If $W$ is a block graphon then the model $\M(\uvec,W)$ is equivalent to \emph{the stochastic block model,} in which all vertices are distributed between several different blocks and the probabilities of adjacencies of vertices depend only on the block they belong to.  In Section \ref{S:SBM-chromatic}, we recall the result of \cite{IsaevKang-SBM} on the asymptotics of  the chromatic number  in this model
and, as a corollary,  prove an asymptotic formula for  $\chi(\G^{\uvec})$. 
Then, in Section \ref{S:sequences}, we establish the following result. 

\begin{thm}\label{T:sequences1}
	Let  $\eps>0$ be fixed  and   $k = k(n) \in \Naturals$ satisfy
	$k =  o (\log n)$.	
	Suppose
	a sequence $(W_n)_{n\in \Naturals}$ of   $k$-block graphons    
	satisfies
	$
	\sup_{x,y \in [0,1]} W_n(x,y) \leq 1-\eps$.  
	Then,   whp  
	\[
	\chi(\G) =   
	(\varphi_k (W_n)+o(1)) \frac{n}{2\log n} ,    \qquad \text{where $\G\sim \calG(n,W_n)$.}
	\]	
	%
\end{thm}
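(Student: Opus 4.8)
The plan is to deduce Theorem~\ref{T:sequences1} from the chromatic-number formula for the stochastic block model, \cite[Theorem~2.1]{IsaevKang-SBM} (recalled in Section~\ref{S:SBM-chromatic}), by conditioning on the sampled points. The starting point is that $\G\sim\calG(n,W_n)$ is the same as $\G^{\X(n)}\sim\M(\X(n),W_n)$ with $\X(n)=(X_1,\dots,X_n)$ having i.i.d.\ uniform coordinates, and that, because $W_n$ is a $k$-block graphon with blocks $S_1,\dots,S_k$ and $\max_{i,j}p_{ij}\le 1-\eps$, the law of $\M(\uvec,W_n)$ depends on $\uvec$ only through the \emph{block profile} $\bigl(|\{j\st u_j\in S_i\}|\bigr)_{i\in[k]}$. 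Conditionally on $\X(n)$, therefore, $\G$ is a stochastic block model with $k$ blocks of (random) sizes $n_i:=|\{j\st X_j\in S_i\}|$ and adjacency matrix $(p_{ij})$.

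I would first control the profile and replace it by a deterministic one. Each $n_i\sim\mathrm{Binomial}(n,\lambda(S_i))$, so a Chernoff bound and a union bound over the $k=o(\log n)$ blocks give $|n_i-\lambda(S_i)n|=O(\sqrt{n\log n})$ for all $i$, whp. Fix a deterministic vector $\uvec^*=\uvec^*(n)$ whose block profile $(m_i)$ is an integer vector summing to $n$ with $m_i=\lambda(S_i)n+O(1)$, and couple $\G=\G^{\X(n)}$ with $\G^*:=\G^{\uvec^*}\sim\M(\uvec^*,W_n)$ so that, inside each block $i$, the two graphs carry the same edges on the first $\min(n_i,m_i)$ vertices (the adjacency probabilities agree) and fresh edges elsewhere. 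Then $\G$ and $\G^*$ have a common induced subgraph on all but $\sum_i|n_i-m_i|=O(k\sqrt{n\log n})=o(n/\log n)$ vertices, so $|\chi(\G)-\chi(\G^*)|=o(n/\log n)$ whp. It therefore suffices to show $\chi(\G^*)=(\varphi_k(W_n)+o(1))\frac{n}{2\log n}$ whp; and since $\G^*$ is a single deterministic sequence of block models with adjacency probabilities bounded away from $1$ and $k=o(\log n)$ blocks, \cite[Theorem~2.1]{IsaevKang-SBM} applies directly and yields $\chi(\G^*)=(\Psi_n+o(1))\frac{n}{2\log n}$ whp, where $\Psi_n$ is the optimisation quantity of that theorem, a function of the frequency vector $\rho^*:=(m_i/n)_{i\in[k]}$ and of $(p_{ij})$.

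It remains to identify $\Psi_n$ with $\varphi_k(W_n)+o(1)$. By construction $\Psi_n$ is the combination-of-balanced-colourings cost of the block model: the infimum, over representations of $\rho^*$ as a finite convex combination $\sum_j\beta_j\nu^{(j)}$ of points of the simplex $\Delta_{k-1}=\{\nu\ge 0\st\sum_i\nu_i=1\}$, of $\sum_j\beta_j\,g(\nu^{(j)})$, where $g(\nu)$ plays the role of $\varphi$ for a block model with relative block sizes $\nu$. On the graphon side, for the $k$-block graphon $W_n$ the functional $\varphi(\mu,W_n)$ of~\eqref{def:var} depends on $\mu$ (which may be taken absolutely continuous, since $\sum_\mu\alpha_\mu\mu=\lambda$) only through $\nu:=(\mu(S_1),\dots,\mu(S_k))\in\Delta_{k-1}$, and a Radon--Nikodym change of variables (cf.\ Remark~\ref{Rem:Martinsson}) identifies it with $g(\nu)$; moreover the normalisation~\eqref{def:convex} projects to $\sum_j\beta_j\nu^{(j)}=(\lambda(S_1),\dots,\lambda(S_k))$. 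Since $\Delta_{k-1}$ has dimension $k-1$, Carath\'eodory's theorem shows that the infimum in~\eqref{def:var-k} is unchanged if one restricts to $|\calM|\le k$ pieces, so $\varphi_k(W_n)$ is exactly the lower convex envelope of $g$ evaluated at $(\lambda(S_i))_i$ (and equals $\varphi_*(W_n)$). Finally, after discarding the blocks of negligible measure --- there are at most $k$ of them and they contribute $o(n/\log n)$ vertices because $k=o(\log n)$ --- the vectors $\rho^*$ and $(\lambda(S_i))_i$ lie at $\ell_1$-distance $o(1)$ from each other and well inside the (reduced) simplex, and since $0\le g\le\log\frac1\eps$ is continuous there, the corresponding lower-envelope values differ by $o(1)$; hence $\Psi_n=\varphi_k(W_n)+o(1)$, which together with the second paragraph proves the theorem.

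The main obstacle is this last identification: matching the optimisation quantity of \cite[Theorem~2.1]{IsaevKang-SBM} (stated there in a somewhat different, reciprocal form) with the functional $\varphi(\cdot,W_n)$, checking via Carath\'eodory that at most $k$ colour types suffice, and controlling the dependence of the lower convex envelope on the frequency vector uniformly in $n$ as $k\to\infty$ and individual blocks shrink (whence the need to first peel off the negligible blocks). The concentration and coupling in the second paragraph are routine, and the analytic estimates on $g$ reduce to continuity on the compact simplex together with the crude bound $\log\frac{1}{1-p_{ij}}\le\log\frac1\eps$.
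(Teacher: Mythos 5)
Your overall plan — condition on $\X$, reduce to a stochastic block model, concentrate the block profile, invoke \cite[Theorem~2.1]{IsaevKang-SBM}, and identify the resulting optimisation quantity with $\varphi_k(W_n)$ — is the same route the paper takes, and your coupling of $\G$ to a deterministic-profile model $\G^{\uvec^*}$ is a perfectly good alternative to the paper's direct argument, which instead controls $w_*(\n(\X,W_n))$ through the monotonicity, scaling, and triangle properties of $w_*$ (Theorem~\ref{l:Qmatrix}(a,e)) and then applies the SBM result at the random profile. Your Carath\'eodory count for why $k$ colour types suffice corresponds to the paper's Theorem~\ref{l:Qmatrix}(f) combined with Lemma~\ref{L:phi-relation}, which gives $\varphi_k(W_n)=\varphi_*(W_n)=w_*(\uvec^{W_n})$ for a $k$-block graphon.

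However, there is a real gap at the step where you assert that \cite[Theorem~2.1]{IsaevKang-SBM} ``applies directly.'' The SBM theorem, as recorded in Theorem~\ref{Thm_sbm} and packaged in Corollary~\ref{C:cond1}, does not apply under only the hypotheses ``$p_{ij}\le 1-\eps$'' and ``$k=o(\log n)$'': it also needs a lower bound $w_*(\n)\ge\eps' n$ (equivalently, $\hat q(\n)\ge\eps'$), which is what allows one to verify the two-sided condition $\hat q(\n)=\|\n\|^{o(1)}$ and the requirement $w_*(\n)\gg k\hat q(\n)q^*\|\n\|/\log\|\n\|$. Nothing in the hypotheses of Theorem~\ref{T:sequences1} excludes $\varphi_*(W_n)\to 0$ — take, e.g., $W_n\equiv c_n$ with $c_n\to 0$ — and in that regime your argument stalls. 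The paper resolves this with a separate reduction at the start of Section~\ref{S:sequences}: it perturbs to $W'_n:=(1-\eps')W_n+\eps'$, for which $\varphi_*(W'_n)=\Omega(1)$, applies the theorem to $W'_n$, and then uses the monotone coupling $\G\subseteq\G'$ (since $W_n\le W'_n$) to conclude $\chi(\G)=o(n/\log n)$ when $\varphi_*(W_n)=o(1)$. You would need this (or an equivalent truncation) before the SBM input can be invoked; as written, the proposal only covers the case $\varphi_*(W_n)=\Omega(1)$. A secondary remark: your identification of $\Psi_n$ with $\varphi_k(W_n)$ is sketched correctly but would need to be carried out concretely via the change of measure $\mu\mapsto(\mu(S_i))_i$ — this is precisely the content of Lemma~\ref{L:phi-relation}, and it is worth isolating as a lemma rather than folding into prose.
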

Considering a trivial sequence $W_n = W$ for all $n$ in  Theorem \ref{T:sequences1}, 
we immediately get Theorem~\ref{T:block}.  Furthermore, approximations with  sequences of $k$-block graphons $W_n$ with growing $k = k(n)$ is crucial  for establishing Conjecture \ref{Con:main} for more general graphons $W$; see  Section~\ref{S:general} for details.


\subsection{Stochastic block model}\label{S:SBM-chromatic}

In this section, we begin with the notations and the results  of \cite{IsaevKang-SBM}.   
The  asymptotics of the chromatic number in the stochastic block model is given  by a certain function  $w_*$ related to $\varphi_{*}(W)$.  We also discuss some properties of  $w_*$  which  are needed later in the proofs. 
%
%
%


%

For a positive integer $\nb$, a  vector $\n=(n_1,\ldots,n_{\nb})^T\in \mathbb{N}^{\nb}$, and 
a $\nb \times \nb$ symmetric matrix  $P=(p_{ij})_{i,j \in [k]}$ with $p_{ij}  \in [0,1]$,  
a  random graph $\G$ from the stochastic block model  $\SBM$,  denoted by $\G \sim \mathcal \M(n,P)$,   is constructed as follows: 
\begin{itemize}
	\item the vertex set $V(\G)$ is partitioned into $k$ disjoint blocks 
	$B_1,\ldots, B_k$  of  sizes $|B_i| = n_i$ for $i \in [k]$ (and we write $V(\G) = B_1 \cup \cdots \cup B_k$);   
	\item  each pair $\{u, v\}$ of distinct vertices $u,v \in V(\G)$ is included in the edge set $E(\G)$,  
	independently of one another,  with probability 
	$$ 
	p(u,v) := p_{ij},
	$$
	where $i = i(u) \in [k]$ and $j = j(v) \in [k]$ are such $u \in B_i$ and $v \in B_j$.
\end{itemize}
For all asymptotic notations   used  in this section, 
we implicitly consider sequences of vectors $\n= \n(n)$ and matrices $P=P(n)$, where  $n \rightarrow \infty$. Our  probability  bounds (including whp results) hold uniformly over all   sequences  $\n(n)$ and  $P(n)$   satisfying stated  assumptions where the implicit functions like in $o(\cdot)$ depend on $n$ only.

In the following, we always assume that $ p_{ij} \in [0,1)$ for all $i,j \in [k]$.
Define the $k\times k$  symmetric matrix  $Q$ by 
\begin{equation*}
	Q:= (q_{ij})_{i,j \in [k]}, \qquad \text{where}\ \ q_{ij} :=   \log \left(\dfrac{1}{1-p_{ij}}\right).
\end{equation*}
Let $\pReals :=[0,+\infty)$ and, for $\xvec,\yvec \in \Reals^k$,
we denote 
\[ 
\yvec \preceq \xvec \text{ whenever $ \xvec - \yvec \in \pReals^k$.}\]
The norm notation $\|\cdot\|$ stand for the standard $1$-norm:
\[
\|\xvec\| := |x_1| + \ldots + |x_k|.
\]

Let  $w: \pReals^k  \to  \pReals$ be defined by 
\begin{equation}\label{w_def}
	w(\xvec)  := \max_{\boldsymbol{0} \preceq \yvec \preceq \xvec}\ \dfrac{\yvec^T\, Q\, \yvec}{ \|\yvec\|},
	\qquad{\xvec \in \pReals^k},
\end{equation}
where  $\boldsymbol{0} = (0,\ldots,0)^T \in \Reals^k$ 
and $\dfrac{\boldsymbol{0}^T\, Q\, \boldsymbol{0}}{ \|\boldsymbol{0}\|}$   is taken to be zero.
Note that $\dfrac{\yvec^T\, Q\, \yvec}{ \|\yvec\|}$  is   a continuous function of $\yvec$, which  achieves its 
the maximal value on the compact set   $\{\yvec \in \pReals^k:   \yvec  \preceq  \xvec\}$. 
In fact, it is always achieved  at a corner, that is,    $y_i \in \{0,x_i\}$ for all $i\in [k]$; see Theorem \ref{l:Qmatrix}(b).  
We also define  $w_*: \pReals^k\, \to \ \pReals$  by
\begin{equation}\label{def:mu-star}
	w_*(\xvec) := 
	\inf_{\calS \in \mathcal{F}(\xvec)} \  \sum_{ \yvec \in \calS}\ w(\yvec),       \qquad{\xvec \in \pReals^k},
\end{equation}
where 
$ \mathcal{F}(\xvec)$ 
consists of finite systems  $\calS$ of vectors from $\pReals^k$ such that $ \sum_{\yvec \in \calS}  \yvec = \xvec.$
In fact, the infimum of  $  \sum_{ \yvec \in \calS} w(\yvec)$ in \eqref{def:mu-star} is  always achieved by a system $\calS \in  \mathcal{F}(\xvec)$ consisting of  at most $k$ vectors; see Theorem \ref{l:Qmatrix}(e).

For a $k$-block graphon $W \equiv \sum_{i,j \in [k]}  p_{ij}  \one_{S_i \times S_j}$ and a vector  $\uvec \in [0,1]^n$, 
the model $\M(\uvec,W)$ is equivalent to  the stochastic block model $\M(\n,P)$, where 
\[
P=P(W) :=  (p_{ij})_{i,j \in [k]}
\] 
and  	$\n = \n(\uvec,W)  \in \Naturals^k$  is  defined by 
\begin{equation}\label{n-def}
	\n(\uvec,W) := (n_1(\uvec,W),\ldots,n_k(\uvec,W))^T\in \Naturals^k,
\end{equation}
where, for each  $i\in [k]$,		 
\[
n_i(\uvec,W) := \left|\{t \in [n] \st u_t \in S_i\}\right|.
\]
Recalling  that the components  of $\X(n) \in [0,1]^n$ are sampled  uniformly at random from $[0,1]$, we find that   
\[
\E \left[\n(\X(n),W) \right]=  n\cdot \uvec^W,
\]
where   
\begin{equation}\label{def:uW}
	\uvec^W  := (\lambda(S_1),\ldots, \lambda(S_k))^T.
\end{equation}
Next lemma establishes the relation between the functions $w(\cdot)$, $w_*(\cdot)$ defined in \eqref{w_def}, \eqref{def:mu-star}  and the quantities  $\varphi(W)$, $\varphi_*(W)$ defined in \eqref{def:var}, \eqref{def:var-star}.
\begin{lemma}\label{L:phi-relation}
	If  $W:[0,1]^2 \rightarrow [0,1)$ is a  block-graphon, then 
	\[
	\varphi(W) = w( \uvec^W) \qquad \text{and} \qquad \varphi_*(W) = w_*(\uvec^{W}).
	\]
\end{lemma}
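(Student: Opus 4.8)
Lemma~\ref{L:phi-relation} is essentially a translation statement: the quantities $\varphi(W)$ and $\varphi_*(W)$, which are defined as suprema/infima over \emph{measures} on $[0,1]$, should coincide with the combinatorial quantities $w(\uvec^W)$ and $w_*(\uvec^W)$, which are defined as suprema/infima over \emph{vectors} in $\pReals^k$. The plan is to set up the correspondence between a probability measure $\mu$ on $[0,1]$ and the vector $\yvec(\mu) := (\mu(S_1),\ldots,\mu(S_k))^T$, and then show that the optimisation problems match up term by term.

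\textbf{Step 1: Reducing $\varphi(W)$ to $w(\uvec^W)$.} Fix a measurable $S\subseteq[0,1]$ with $\lambda(S)>0$ and write $y_i := \lambda(S\cap S_i)$, so $\yvec := (y_1,\ldots,y_k)^T$ satisfies $\zerovec \preceq \yvec \preceq \uvec^W$ and $\|\yvec\| = \lambda(S)$. Since $W$ is constant equal to $p_{ij}$ on $S_i\times S_j$, we have
\[
\int_S\int_S \log\frac{1}{1-W(x,y)}\,dx\,dy = \sum_{i,j\in[k]} \lambda(S\cap S_i)\lambda(S\cap S_j)\, q_{ij} = \yvec^T Q\yvec,
\]
so the ratio inside the supremum in \eqref{def:var_uni} equals $\yvec^T Q\yvec/\|\yvec\|$. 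Conversely, given any $\yvec$ with $\zerovec\preceq\yvec\preceq\uvec^W$, one can choose $S_i'\subseteq S_i$ with $\lambda(S_i') = y_i$ (possible since $y_i\le\lambda(S_i)$) and set $S = \bigcup_i S_i'$ to realise that ratio. Hence the two suprema range over the same set of values, giving $\varphi(W) = w(\uvec^W)$ directly from \eqref{w_def}. The same argument applied to an arbitrary probability measure $\mu$ (not just $\lambda$) shows $\varphi(\mu,W) = w(\yvec(\mu))$, where $\yvec(\mu) = (\mu(S_1),\ldots,\mu(S_k))^T$; note $\|\yvec(\mu)\| = 1$, but $w$ is positively homogeneous so this causes no issue.

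\textbf{Step 2: Reducing $\varphi_*(W)$ to $w_*(\uvec^W)$.} For one direction, take any representation $(\calM,\alphavec)\in\calC_\lambda$ and map each $\mu\in\calM$ to the vector $\alpha_\mu\,\yvec(\mu)\in\pReals^k$; then $\sum_\mu \alpha_\mu\yvec(\mu) = \yvec(\lambda) = \uvec^W$ by \eqref{def:convex}, so $\calS := \{\alpha_\mu\yvec(\mu) : \mu\in\calM\}\in\calF(\uvec^W)$, and by positive homogeneity of $w$ together with Step~1, $\sum_\mu \alpha_\mu\varphi(\mu,W) = \sum_\mu \alpha_\mu w(\yvec(\mu)) = \sum_{\zvec\in\calS} w(\zvec) \ge w_*(\uvec^W)$; taking the infimum yields $\varphi_*(W)\ge w_*(\uvec^W)$. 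For the reverse direction, take a finite system $\calS = \{\zvec^{(1)},\ldots,\zvec^{(m)}\}\in\calF(\uvec^W)$; set $\alpha_j := \|\zvec^{(j)}\|$ and let $\mu_j$ be any probability measure on $[0,1]$ with $\mu_j(S_i) = z_i^{(j)}/\alpha_j$ — concretely, partition each $S_i$ into pieces of the prescribed Lebesgue measures and let $\mu_j$ be uniform on the relevant union — discarding any $\zvec^{(j)} = \zerovec$. Then $\sum_j\alpha_j = \|\uvec^W\| = 1$ and $\sum_j \alpha_j\mu_j = \lambda$ (check on each $S_i$), so $(\calM,\alphavec)\in\calC_\lambda$ and $\sum_j\alpha_j\varphi(\mu_j,W) = \sum_j w(\zvec^{(j)})$; taking the infimum gives $\varphi_*(W)\le w_*(\uvec^W)$.

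\textbf{Main obstacle.} The only genuinely technical point is the measure-realisation step used in both directions: given target values $y_i\le\lambda(S_i)$ one needs a measurable subset of $S_i$ of exactly that Lebesgue measure, and given a measure $\mu$ absolutely continuous in the discrete sense one needs to exhibit an explicit $\mu$ (e.g.\ a uniform measure on a suitable union of sub-blocks) — both are routine consequences of the fact that Lebesgue measure on an interval is atomless, but they must be stated carefully so the correspondence is genuinely a bijection-up-to-rescaling at the level of the relevant functionals. Everything else is bookkeeping: constancy of $W$ on blocks turns the double integrals into the quadratic form $\yvec^T Q\yvec$, and positive homogeneity of $w$ (which holds because $\yvec^T Q\yvec/\|\yvec\|$ scales linearly) reconciles the normalisation $\|\yvec(\mu)\|=1$ of probability measures with the unnormalised vectors appearing in $\calF(\uvec^W)$. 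I would also invoke Theorem~\ref{l:Qmatrix}(e), already cited in the excerpt, if it is convenient to know the infimum in \eqref{def:mu-star} is attained by a system of at most $k$ vectors, though strictly the identity of the two infima does not require it.
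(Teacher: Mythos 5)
Your approach matches the paper's: block-constancy turns the double integral into the quadratic form $\yvec^T Q\yvec$, and one then matches the sup/inf in $\varphi(\cdot)$ and $\varphi_*(\cdot)$ with those in $w(\cdot)$ and $w_*(\cdot)$ (the paper's own proof is even terser than yours and essentially just asserts the correspondence).

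There is, however, a small but genuine gap in the sentence ``The same argument applied to an arbitrary probability measure $\mu$ shows $\varphi(\mu,W) = w(\yvec(\mu))$.'' For the Lebesgue measure $\lambda$ the realisation step is fine because $\lambda$ is atomless, so one can carve out subsets $S_i'\subseteq S_i$ of any prescribed measure. For an arbitrary $\mu\in\calM$, though, $\mu$ may have atoms inside some $S_i$, in which case you cannot produce a measurable $S$ with $\mu(S\cap S_i)$ equal to an arbitrary $y_i'\le\mu(S_i)$; the argument as written only gives $\varphi(\mu,W)\le w(\yvec(\mu))$. Since in your Step~2 the inequality you actually need to establish $\varphi_*(W)\ge w_*(\uvec^W)$ is $\varphi(\mu,W)\ge w(\yvec(\mu))$, this direction has to be argued differently. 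The fix is to invoke Theorem~\ref{l:Qmatrix}(b): the maximum defining $w(\yvec(\mu))$ is attained at a corner $\zvec$ with $z_i\in\{0,\mu(S_i)\}$, and the corresponding set $S=\bigcup_{i\st z_i\ne 0} S_i$ always realises it, whatever the structure of $\mu$. Your ``Main obstacle'' paragraph misdiagnoses this point by attributing both realisation steps to atomlessness of Lebesgue measure, when the relevant issue in the second direction is atoms of $\mu$ itself.
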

\begin{proof}
	Since $W$ is a block graphon, we have $W \equiv \sum_{i,j \in [k]}  p_{ij}  \one_{S_i \times S_j}$
	for some  partition $(S_i)_{i\in[k]}$ of $[0,1]$. Let $Q = Q(W)$ be defined by \eqref{defQW}. For any measurable $S \subset [0,1]$, observe that
	\[
	\int_{S } \int_S\log \left(\dfrac{1}{1-W(x,y)}\right)  dx \,dy = \dfrac{\yvec^TQ \yvec}{ \|\yvec\|},
	\]
	where $\yvec = (y_1,\ldots,y_k)^T$ and $y_i = \lambda(S_i \cap S)$ for all $i \in [k]$. 
	Note that $\lambda(S_i \cap S) \leq \lambda(S_i)$.  Thus, 
	maximising the LHS over $S$ is equivalent to maximising of the RHS over $\yvec \preceq \uvec^W$. This establishes 	$\varphi(W) = w( \uvec^W)$. Similarly, the infimum in the definition of $\varphi_*(W)$
	corresponds to the infimum in the definition of $w_*(\yvec^W)$.
\end{proof}

Let
\begin{equation}\label{def:q-star}
	q^*:= \max_{i\in [k]}q_{ii} \qquad \text{and} \qquad \hat q(\xvec):= \begin{cases}\displaystyle \frac{\sum_{i\in [k]} x_i q_{ii}}
		{\|\xvec\|}, & \text{if $\xvec \neq \boldsymbol{0}$,}
		\\
		q^*, & \text{otherwise.}
	\end{cases}
\end{equation}

In order to prove Theorem~\ref{T:sequences1} we need the following result \cite[Theorem 2.1]{IsaevKang-SBM} on the chromatic number in the stochastic block model -- in fact we use its consequence Corollary~\ref{C:cond1}.  

\begin{thm}\label{Thm_sbm}
	Let $\sigma \in [0, \sigma_0]$  for some fixed  $\sigma_0 < \frac14$
	and 
	let   $P = (p_{ij})_{i,j\in [k]}$ be such that $p_{ij}=p_{ji} \in [0,1)$  for all $i,j \in [k]$.
	Assume that  the following asymptotics hold:
	\begin{equation}\label{ass_sbm}
		\|\n\| \rightarrow \infty, \qquad k= \|\n\|^{o(1)}, \qquad  q^* =    \|\n\|^{-\sigma+o(1)}, \qquad \hat{q}(\n) =    \|\n\|^{-\sigma+o(1)}. 
	\end{equation}
	Assume also that  
	\begin{equation}\label{ass_sbm2}
		\left(1+ \dfrac{1}{q^*}\right)\max_{i,j \in [k]} q_{ij} \ll \log\|\n\| 
		\qquad 
		\text{ and }
		\qquad
		w_*(\n) \gg \dfrac{ k \hat q(\n) q^* \|\n\|    }{\log \|\n\|}.
	\end{equation}
	Then, whp 
	\begin{equation*}\label{eq_sbm}
		\chi(\G) = 
		(1+o(1))\dfrac{w_*(\n)}{ 2(1-\sigma)\log \|\n\| }, \quad \text{ where  $\G \sim \SBM$.}
	\end{equation*}
\end{thm}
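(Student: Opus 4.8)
Since Theorem~\ref{Thm_sbm} is quoted verbatim from \cite[Theorem~2.1]{IsaevKang-SBM} — the functions $w$ and $w_*$ of \eqref{w_def} and \eqref{def:mu-star} being exactly the ones used there, and Lemma~\ref{L:phi-relation} later tying them to $\varphi$ and $\varphi_*$ — its proof in the present paper is merely a pointer to that reference, so there is nothing to reprove. For orientation, here is the route one would take to establish it from scratch: it follows Bollob\'as's blueprint for $\calG(n,p)$, adapted to the inhomogeneous and mildly sparse regime singled out by \eqref{ass_sbm}--\eqref{ass_sbm2}, and splits into a matching upper and lower bound on $\chi(\G)$.

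\emph{Upper bound.} The plan is to colour greedily with ``aligned'' colour classes. Fix $\delta>0$ and, using Theorem~\ref{l:Qmatrix}(e), choose a decomposition $\n=\sum_{s\in\calS}\yvec^{(s)}$ into at most $k$ nonnegative vectors with $\sum_{s}w(\yvec^{(s)})\le w_*(\n)+\delta$; by Theorem~\ref{l:Qmatrix}(b) one may also take each piece ``self-optimal'', so that the maximum defining $w(\yvec^{(s)})$ is attained at $\yvec^{(s)}$ itself. Split $V(\G)$ into sub-populations of sizes $\yvec^{(s)}$, each again a stochastic block model with matrix $P$, and colour the $s$-th one by repeatedly deleting an independent set whose profile is proportional to $\yvec^{(s)}$. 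The crucial input is a strong concentration estimate (Janson's inequality, or a vertex-exposure martingale) showing that whp \emph{every} subset retaining all but a $\mathrm{polylog}$-fraction of that sub-population still contains such an aligned independent set of size $(1-o(1))\,\frac{2(1-\sigma)\log\|\n\|}{w(\yvec^{(s)})}\,\|\yvec^{(s)}\|$; the factor $1-\sigma$ enters because these classes have size $\|\n\|^{\sigma+o(1)}$, leaving only $(1-\sigma)\log\|\n\|$ of entropy per vertex. Colouring the leftover $O(\|\n\|/\mathrm{polylog})$ vertices with fresh colours, the $s$-th sub-population costs $(1+o(1))\,w(\yvec^{(s)})/(2(1-\sigma)\log\|\n\|)$ colours; summing over $s$ and letting $\delta\to0$ yields $\chi(\G)\le(1+o(1))\,w_*(\n)/(2(1-\sigma)\log\|\n\|)$.

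\emph{Lower bound.} Here one shows that whp $\G$ admits no proper colouring with fewer than $(1-\delta)\,w_*(\n)/(2(1-\sigma)\log\|\n\|)$ colours. Any such colouring induces colour-profile vectors $\yvec^{(1)},\dots,\yvec^{(m)}$ with $\sum_s\yvec^{(s)}=\n$, and a first-moment bound over all such profile systems — each weighted by the number of compatible vertex partitions times the probability that all $m$ classes are independent — must be shown to be $\|\n\|^{-\omega(1)}$ whenever $m$ is below the threshold. The heart of it is that, for such small $m$, the ``entropy'' $\sum_{i,s}y^{(s)}_i\log(n_i/y^{(s)}_i)$ cannot overcome the ``cost'' $\tfrac12\sum_s(\yvec^{(s)})^{T}Q\yvec^{(s)}$, which is exactly what the variational definition of $w_*(\n)$ together with the corner-maximiser and subadditivity/homogeneity properties of $w$ from Theorem~\ref{l:Qmatrix} are there to deliver (they let one reduce to structured decompositions). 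A concentration step — a vertex-exposure martingale, for which $\chi(\G)$ is $1$-Lipschitz and hence lies in a window of width $O(\sqrt{\|\n\|}\,\mathrm{polylog}\,\|\n\|)$, which is $o(w_*(\n)/\log\|\n\|)$ since $\sigma_0<\tfrac14$ and by \eqref{ass_sbm2} — then turns the first-moment obstruction into a whp lower bound.

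\emph{Main obstacle.} The difficulty sits almost entirely in the lower bound and, more broadly, in pinning the moment computations to $w_*(\n)$ up to a $1+o(1)$ factor in the inhomogeneous, mildly sparse setting: one must control a first moment over the enormous space of colour-profile decompositions of $\n$ (and, for the upper bound, a union bound over subsets), while keeping \emph{all} error terms — from Stirling, from replacing $\log(1/(1-p_{ij}))$ by $p_{ij}$, and from the union-bound exponents and variances — well below the leading term $w_*(\n)$. Making every one of these estimates go through simultaneously, in particular keeping the concentration window and the subset-union losses negligible against the answer, is precisely what forces the quantitative hypotheses \eqref{ass_sbm} and \eqref{ass_sbm2}.
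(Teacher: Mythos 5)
You correctly identify that the paper does not prove this theorem but imports it verbatim from \cite[Theorem~2.1]{IsaevKang-SBM}, so a citation is the complete ``proof''. Your appended sketch of how one would establish it from scratch is reasonable orientation but is not required; the key observation — that this is a quoted external result — matches the paper.
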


We also state from  \cite[Theorem 2.6]{IsaevKang-SBM}  the following  useful facts about the the functions $w(\cdot)$
and  $ w_*(\cdot) $ that will be needed in the proofs later.

\begin{thm}\label{l:Qmatrix}
	Let $Q = (q_{ij})_{i,j \in [k]}$ be a  symmetric $k\times k$ matrix with non-negative entries.  
	Let $q^*$ and $\hat{q}(\cdot)$ be defined according \eqref{def:q-star}.
	Then, the following hold for any $\x=(x_1,\ldots,x_k)^T\in \pReals^k$.
	\begin{itemize}
		\item[\rm (a)] {\rm[Scaling and monotonicity].}	            If $\xvec' \in \pReals^k$ and $ \xvec'   \preceq s\xvec$ for some $s>0$, then
		$w(\xvec') \leq s w(\xvec)$ and $w_*(\xvec') \leq  s w_*(\xvec)$. 
		In particular, $w(s\xvec) = sw(\xvec)$ and   $w_*(s\xvec) = s w_*(\xvec)$. 
		\item[\rm(b)] {\rm [Corner maximiser].} There is $\zvec = (z_1,\ldots, z_k)^T$ with $z_i \in \{0,x_i\}$ for all $i\in [k]$
		such that 
		\[\dfrac{\zvec^T \, Q\,  \zvec}{\|\zvec\|} = w(\xvec):=\max_{\boldsymbol{0} \preceq \yvec \preceq \xvec}\ \dfrac{\yvec^T\, Q\, \yvec}{ \|\yvec\|}.\]

		\item[\rm(c)] {\rm[Pseudodefinite property].}  If $\yvec^T Q\yvec \geq 0$  for all $\yvec \in \Reals^k$ with $\sum_{i\in [k]}y_i =0$, then
		\[
		w(\xvec) = w_*(\xvec):=  \inf_{\calS \in \mathcal{F}(\xvec)} \  \sum_{ \yvec \in \calS}\ w(\yvec).
		\]

		\item[\rm(d)] {\rm[Upper and lower bounds].}  
		We have
		\[ 
		q^* \|\xvec\| \geq \hat{q}(\xvec) \|\xvec\|    \geq  w_*(\xvec) \geq  \dfrac{\left(\hat{q}(\xvec) \right)^2}{\sum_{i \in [k]}q_{ii}}\|\xvec\| \geq   \dfrac{\left(\hat{q}(\xvec) \right)^2}{kq^*}\|\xvec\|, 
		\]
		where  the lower bounds for $w_*(\xvec)$ hold  under the additional condition  that $q^*>0$.

		\item[\rm(e)] {\rm[Triangle inequality].}  For any $\xvec' \in \pReals^k$, we have  
		$
		w_*(\xvec) + w_*(\xvec') \geq w_*(\xvec+\xvec').  
		$ 
		

		\item[\rm(f)] {\rm[Minimal system of $k$ vectors].}  There exists a system of vectors $(\x^{(t)})_{t \in [k]}$,  each from  $ \pReals^k $,
		such that 
		$\sum_{t \in [k]} \x^{(t)} = \x$    	
		and  $\sum_{t \in [k]}  w(\xvec^{(t)}) = w_*(\xvec)$. 
		%
	\end{itemize}
\end{thm}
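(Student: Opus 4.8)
The plan is to establish the six items in roughly increasing order of difficulty: the homogeneity/subadditivity statements (a) and (e) are warm-ups, (b), (c), (d) are short structural arguments, and (f) is the substantive one. For (a): the substitution $\yvec\mapsto s^{-1}\yvec$ in \eqref{w_def} gives $w(s\xvec)=sw(\xvec)$, and since $\xvec'\preceq\xvec$ only enlarges the feasible region $\{\zerovec\preceq\yvec\preceq\xvec\}$, $w$ is $\preceq$-monotone; combining, $w(\xvec')\le sw(\xvec)$ whenever $\xvec'\preceq s\xvec$. For $w_*$, fix $\eps>0$ and a system $\calS$ with $\sum_{\zvec\in\calS}\zvec=\xvec$ and $\sum_{\zvec\in\calS}w(\zvec)\le w_*(\xvec)+\eps$; since $\xvec'\preceq s\xvec=\sum_{\zvec\in\calS}s\zvec$ we may split $\xvec'$ coordinatewise as $\xvec'=\sum_{\zvec\in\calS}\zvec'$ with $\zerovec\preceq\zvec'\preceq s\zvec$, whence $w_*(\xvec')\le\sum w(\zvec')\le s\sum_{\zvec\in\calS}w(\zvec)\le s(w_*(\xvec)+\eps)$; let $\eps\to0$. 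Part (e) is the same idea by concatenation: the disjoint union of near-optimal systems for $\xvec$ and $\xvec'$ is a system for $\xvec+\xvec'$ of cost at most $w_*(\xvec)+w_*(\xvec')+2\eps$.

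For (b), start from any maximiser $\zvec$ of $\yvec\mapsto\yvec^TQ\yvec/\|\yvec\|$ on the box (it exists by compactness, with the value at $\zerovec$ read as $0$), and show that whenever $0<z_i<x_i$ one may move $z_i$ to $0$ or $x_i$ without decreasing the objective; iterating over coordinates then yields a corner maximiser. Moving along $\evec_i$ keeps all entries non-negative on the feasible segment, so the denominator is $\|\zvec\|+s$ and the objective becomes $f(s)=cs+(b-cd)+K/(s+d)$ with $c=q_{ii}\ge0$, $d=\|\zvec\|$, $b=2(Q\zvec)_i$, $K=\zvec^TQ\zvec-bd+cd^2$; hence $f'(s)=c-K/(s+d)^2$. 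If $K\le0$ then $f'\ge c\ge0$, so $f$ is non-decreasing and we push $z_i$ up to $x_i$; if $K>0$ then $f$ is strictly convex on the segment, so it attains its maximum at an endpoint, and since $\zvec$ is a global maximiser that endpoint value equals $w(\xvec)$, so we move there. (The degenerate case $\zvec=z_i\evec_i$ is simpler still, as $f$ is then linear.)

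For (c) it suffices to prove $w$ is subadditive, $w(\avec+\bvec)\le w(\avec)+w(\bvec)$: iterating over a system gives $w_*\ge w$, and $w_*\le w$ is trivial. Let $\zvec\preceq\avec+\bvec$ achieve $w(\avec+\bvec)$ and split $\zvec=\zvec_a+\zvec_b$ coordinatewise with $\zvec_a\preceq\avec$, $\zvec_b\preceq\bvec$; writing $p=\|\zvec_a\|$, $r=\|\zvec_b\|$ and clearing denominators, the inequality $\zvec^TQ\zvec/\|\zvec\|\le\zvec_a^TQ\zvec_a/p+\zvec_b^TQ\zvec_b/r$ is equivalent to $(p\zvec_b-r\zvec_a)^TQ(p\zvec_b-r\zvec_a)\ge0$, which holds because $\sum_i(p\zvec_b-r\zvec_a)_i=pr-rp=0$ and $Q$ is pseudodefinite; bounding the two terms by $w(\avec)$ and $w(\bvec)$ finishes (c). For (d): $q^*\|\xvec\|\ge\hat q(\xvec)\|\xvec\|=\sum_ix_iq_{ii}$ since $q_{ii}\le q^*$; next $w_*(\xvec)\le\sum_iw(x_i\evec_i)=\sum_ix_iq_{ii}$ using the system $\{x_i\evec_i\}_{i\in[k]}$ and $w(x_i\evec_i)=x_iq_{ii}$; for the lower bound, $w(\yvec)\ge\yvec^TQ\yvec/\|\yvec\|\ge(\sum_iy_iq_{ii})^2/(\|\yvec\|\sum_iq_{ii})$ by Cauchy--Schwarz, and summing over any system $\calS$ and applying Cauchy--Schwarz in Engel form (using $\sum_{\yvec\in\calS}\|\yvec\|=\|\xvec\|$ and $\sum_{\yvec\in\calS}\sum_iy_iq_{ii}=\sum_ix_iq_{ii}=\hat q(\xvec)\|\xvec\|$) gives $\sum_{\yvec\in\calS}w(\yvec)\ge(\hat q(\xvec))^2\|\xvec\|/\sum_iq_{ii}$; finally $\sum_iq_{ii}\le kq^*$. (If $q^*=0$ then $Q=0$ and everything is trivial.)

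Part (f) is the \textbf{main obstacle}. First, $w$ is positively homogeneous by (a) and continuous on $\pReals^k$: it equals $\|\yvec\|$ times a continuous function of $\yvec/\|\yvec\|$ on the compact simplex (being a finite maximum of the continuous $1$-homogeneous functions $g_U(\yvec)=\yvec_U^TQ\yvec_U/\|\yvec_U\|$, each bounded by $(\max_{i,l}q_{il})\|\yvec\|$), with $w(\zerovec)=0$. Consider $E:=\{(\yvec,w(\yvec)):\yvec\in\pReals^k\}\subseteq\Reals^{k+1}$ and its convex conical hull $\mathrm{cone}(E)$. For any system $\calS$ summing to $\xvec$, the point $(\xvec,\sum_{\yvec\in\calS}w(\yvec))$ lies in $\mathrm{cone}(E)$, so by Carath\'eodory's theorem for cones it is a non-negative combination of at most $k+1$ points of $E$; by positive homogeneity this re-expresses $\calS$ as a system of at most $k+1$ vectors of the same cost. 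Taking a minimising sequence of such systems, whose vectors all lie in the compact box $\{\zerovec\preceq\yvec\preceq\xvec\}$, and passing to a convergent subsequence, continuity of $w$ produces an optimal system $\calS^{*}=\{\yvec_1,\dots,\yvec_{k+1}\}$ with at most $k+1$ vectors. Were the images $(\yvec_j,w(\yvec_j))$ linearly independent in $\Reals^{k+1}$, then $(\xvec,w_*(\xvec))=\sum_j(\yvec_j,w(\yvec_j))$ would be an interior point of the full-dimensional cone they span, hence of $\mathrm{cone}(E)$; but no point $(\xvec,w_*(\xvec)-\eps)$ with $\eps>0$ lies in $\mathrm{cone}(E)$ (it would yield a cheaper system), so $(\xvec,w_*(\xvec))$ is a boundary point --- a contradiction. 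Hence the images are linearly dependent, and moving along a dependency (replace coefficients $1$ by $1+tc_j$ until one vanishes, preserving $\sum_j\yvec_j=\xvec$ and, since $\sum_jc_jw(\yvec_j)=0$, the cost) drops a vector; we reach an optimal system of at most $k$ nonzero vectors and pad with zeros to get exactly $k$. The delicate points are the continuity of $w$ near the boundary of $\pReals^k$ and the possibly non-closed cone $\mathrm{cone}(E)$, but the boundary/dependency argument above avoids needing an explicit closure.
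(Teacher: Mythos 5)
The paper does not contain a proof of Theorem \ref{l:Qmatrix}: it is imported verbatim from \cite[Theorem~2.6]{IsaevKang-SBM}, so there is no in-paper argument to compare against. What you have supplied is therefore a self-contained proof rather than a reproduction, and on its own terms it is essentially correct.

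A few remarks. In part (c), your reduction to subadditivity of $w$ is exactly the role played by the algebraic identity of Lemma~\ref{l:identity}(b) (itself quoted from \cite[Lemma~29]{IsaevKang-SBM}); after clearing denominators you re-derive that identity and observe that the vector $r\zvec_a-p\zvec_b$ has zero coordinate sum, at which point pseudodefiniteness applies. This is fine, though you should dispose explicitly of the degenerate cases $\|\zvec_a\|=0$ or $\|\zvec_b\|=0$ (then $\zvec$ is already $\preceq\avec$ or $\preceq\bvec$ and the bound is immediate), and likewise of $\zvec=\zerovec$. In part (b), your case $K>0$ with $0<z_i<x_i$ is in fact impossible at a global maximiser (strict convexity would force a strictly larger endpoint value), but since the claim you draw is merely ``an endpoint value is $\ge f(0)$, so we may move there,'' the conclusion is unaffected. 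In part (d), the parenthetical ``if $q^*=0$ then $Q=0$'' is false: $q^*=0$ only kills the diagonal, and off-diagonal entries of $Q$ may be positive. Fortunately this is harmless --- when $q^*=0$ your own upper-bound argument already gives $w_*(\xvec)\le\sum_i x_iq_{ii}=0$, so the surviving inequality $\hat q(\xvec)\|\xvec\|\ge w_*(\xvec)$ holds, and the lower bounds are only asserted under $q^*>0$. You may wish to replace that parenthetical with this observation. Part (f) is the genuinely nontrivial step, and your Carath\'eodory-for-cones argument is sound: positive $1$-homogeneity and continuity of $w$ on $\pReals^k$ (via the corner representation from (b)) make the graph set $E$ behave well; Carath\'eodory gives systems of at most $k+1$ vectors with the same cost; compactness of the box $[\zerovec,\xvec]$ together with continuity of $w$ yields an optimal system of $k+1$ vectors; and the interior-point argument correctly rules out linear independence of the $k+1$ images, so a linear dependency can be exploited to drop to $k$ vectors without changing either $\sum_j\yvec_j$ or $\sum_j w(\yvec_j)$. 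The one thing worth making explicit is that the interior-point contradiction uses only the simplicial subcone generated by the $k+1$ points and so needs no closedness of $\mathrm{cone}(E)$, which you note. Overall, this is a correct and fairly elegant reconstruction of the source lemma; whether it matches the argument in \cite{IsaevKang-SBM} cannot be checked from this paper.
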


Recalling that  $\calG(\uvec,W)$ is equivalent to the stochastic block model for a block graphon $W$, we get the following corollary.

\begin{cor}\label{C:cond1}
	Let  $\eps,\eps'>0$ be fixed  and   $k = k(n) \in \Naturals$ satisfy
	$k =  o (\log n)$.	
	Suppose 
	$k$-block graphons   $W= W(n):  [0,1]^2 \rightarrow [0,1]$ 
	and   vectors  $\uvec= \uvec(n) = (u_1 \ldots u_n)^T \in [0,1]^n$ satisfy  
	\[
	\sup_{x,y \in [0,1]} W(x,y)\leq 1-\eps \qquad 
	\text{and}
	\qquad
	w_*(\n(\uvec,W))  \geq \eps' n,
	\]
	where $Q = Q(W)$ is the matrix from \eqref{defQW} and  $w_*(\cdot)$ is   defined  by \eqref{def:mu-star}. 
	Then,   whp 
	\[
	\chi(\G^{\uvec}) =   
	(1+o(1)) \dfrac{w_* (\n(\uvec,W))}{2\log n},    \qquad \text{where $\G^{\uvec} \sim \calG(\uvec,W)$.}
	\]	
\end{cor}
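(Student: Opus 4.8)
The plan is to apply Theorem~\ref{Thm_sbm} with the choice $\sigma=0$ (and, say, $\sigma_0=\tfrac18$). Write $W\equiv\sum_{i,j\in[k]}p_{ij}\one_{S_i\times S_j}$. Since $W$ is a block graphon, the model $\calG(\uvec,W)$ is exactly the stochastic block model $\M(\n,P)$ with $P=P(W)=(p_{ij})_{i,j\in[k]}$ and $\n=\n(\uvec,W)$ the vector of block occupancies, and the matrix $Q$ attached to $P$ in Theorem~\ref{Thm_sbm} is precisely $Q(W)$ from~\eqref{defQW}; note also that $\|\n\|=n$. (If some $S_i$ contains none of the $u_t$ we simply delete it and pass to a $k'$-block graphon with $k'\le k$; this alters neither $w_*(\n(\uvec,W))$ nor the conclusion.) So it is enough to verify the hypotheses of Theorem~\ref{Thm_sbm} with $\sigma=0$, uniformly over the admissible sequences.

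First I would extract the two elementary bounds on $Q$. From $\sup_{x,y}W(x,y)\le1-\eps$ we get $0\le q_{ij}=\log\frac1{1-p_{ij}}\le\log\frac1\eps$ for all $i,j$, hence $q^*\le\log\frac1\eps$ and $\max_{i,j}q_{ij}\le\log\frac1\eps$. Conversely, Theorem~\ref{l:Qmatrix}(d) gives $w_*(\n)\le\hat q(\n)\,\|\n\|\le q^*\|\n\|$, so the assumption $w_*(\n)\ge\eps'n$ forces $q^*\ge\hat q(\n)\ge\eps'$. Thus $q^*$ and $\hat q(\n)$ are both bounded above and below by positive constants; in particular $q^*=\|\n\|^{o(1)}$ and $\hat q(\n)=\|\n\|^{o(1)}$. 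Together with $\|\n\|=n\to\infty$ and $k=o(\log n)=\|\n\|^{o(1)}$, this is exactly~\eqref{ass_sbm} for $\sigma=0$.

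Next I would check~\eqref{ass_sbm2}. The first inequality there is $\big(1+\frac1{q^*}\big)\max_{i,j}q_{ij}\ll\log n$; since the left-hand side is at most $(1+\frac1{\eps'})\log\frac1\eps=O(1)$ and $\log n\to\infty$, it holds. For the second, $w_*(\n)\gg\frac{k\,\hat q(\n)\,q^*\,n}{\log n}$: the right-hand side is at most $\frac{(\log\frac1\eps)^2\,k\,n}{\log n}$, and dividing by $w_*(\n)\ge\eps'n$ leaves $O\!\big(\frac{k}{\log n}\big)=o(1)$ because $k=o(\log n)$. With all hypotheses verified, Theorem~\ref{Thm_sbm} gives $\chi(\G^{\uvec})=(1+o(1))\frac{w_*(\n)}{2(1-\sigma)\log\|\n\|}$, which for $\sigma=0$ and $\|\n\|=n$ is $(1+o(1))\frac{w_*(\n(\uvec,W))}{2\log n}$, as required.

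Since this is just a verification of hypotheses, there is no genuine obstacle; the points demanding a little care are (i) that everything is uniform, i.e.\ the $o(1)$'s and $\ll$'s above are controlled purely by $n$ through the fixed constants $\eps,\eps'$ and the prescribed rate $k=o(\log n)$ — which matches the way Theorem~\ref{Thm_sbm} is stated — and (ii) the borderline issues, namely that $\sigma=0$ is admitted by Theorem~\ref{Thm_sbm} and that $q^*>0$ holds wherever Theorem~\ref{l:Qmatrix}(d) invokes it, both of which are immediate here since $q^*\ge\eps'$.
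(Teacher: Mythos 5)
Your proof is correct and takes essentially the same route as the paper: both apply Theorem~\ref{Thm_sbm} with $\sigma=0$, obtain the two-sided constant bounds $\eps'\le\hat q(\n)\le q^*\le\log\frac1\eps$ from the hypotheses together with Theorem~\ref{l:Qmatrix}(d), and then verify~\eqref{ass_sbm} and~\eqref{ass_sbm2} by exactly the same estimates. The only additions in your write-up (the remark about deleting empty blocks, the explicit choice $\sigma_0=\tfrac18$) are harmless and do not change the argument.
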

\begin{proof} 
	We   check the assumptions of Theorem \ref{Thm_sbm}   with 
	$\n = \n(\uvec, W)$, $P=P(W)$, and  $\sigma = 0$.  Note that $\|\n(\uvec,W)\| = n$.  Observe that 
	\[
	k = o(\log n) = n^{o(1)} \qquad \text{and}	\qquad  q^* \leq  \max_{x,y\in [0,1]} \log \left(\dfrac{1}{1- W(x,y)}\right) \leq \log \dfrac{1}{\eps} = n^{o(1)}. 
	\]
	From the upper bound of  Theorem~\ref{l:Qmatrix}(d) and the assumptions of corollary, we have  that
	\[
	q^* \geq \hat q(\n(\uvec,W))  \geq   \dfrac{1}{n} w_*( \n(\uvec,W))  \geq \eps'   = n^{o(1)}.  
	\]
	This verifies \eqref{ass_sbm}. Furthermore,  we find that
	\[
	\left(1+ \dfrac{1}{q^*}\right) \max_{x,y\in [0,1]} \log \left(\dfrac{1}{1- W(x,y)}\right)  = O(1) \ll \log n
	\] 	
	and 
	\[
	\frac{ n k q^*  \hat q (\n(\uvec,W)) } {w_*(\n(\uvec,W))} = O( k ) \ll \log n.
	\]
	This verifies \eqref{ass_sbm2}. Applying Theorem \ref{Thm_sbm} completes the proof.
\end{proof}

%



\subsection{Proof of  Theorem \ref{T:sequences1}}
\label{S:sequences} 
For notation simplicity, we suppress the subscript and let  $ W = W_n \equiv \sum_{i,j\in [k]} p_{ij} \boldsymbol{1}_{S_i \times S_j}$
throughout this section. 

First, we show that  it  is sufficient to prove Theorem \ref{T:sequences1} under additional assumption that 
$\varphi_*(W) = \Omega(1)$. Indeed, if $\varphi_*(W) = o(1)$ then we consider a perturbed $k$-block graphon $W'$ defined by
\[
W'\equiv \sum_{i,j\in [k]} p_{ij}' \boldsymbol{1}_{S_i \times S_j}, \qquad  \text{where }
p_{ij}' :=  
(1-\eps')p_{ij} + \eps'.
\]
Since $1-p_{ij}' = (1-\eps')(1-p_{ij})$, we get that 
\[
\log \left(\dfrac{1}{1-W'(x,y)}\right) =
\log \left(\dfrac{1}{1-\eps'}\right)+ \log \left(\dfrac{1}{1-W(x,y)}\right).
\]
Then, for any measure $\mu \in \calP([0,1])$, we have  
\begin{align*}
	\dfrac{1}{\mu(S)}\
	\int_{S}\int_{S}	&\log \left(\dfrac{1}{1-W'(x,y)}\right)  d\mu(x)d\mu(y)
	=   
	\\
	&=	\dfrac{1}{\mu(S)}\
	\int_{S}\int_{S}	 \log \left(\dfrac{1}{1-W(x,y)}\right)  d\mu(x)d\mu(y) 	
	+  \mu(S) \log \left(\dfrac{1}{1-\eps'}\right).
\end{align*}
Recalling definitions  \eqref{def:var}, \eqref{def:var-star},   we get
\[
\log \left(\dfrac{1}{1-\eps'}\right) \leq \varphi(\mu,W') \leq \varphi(\mu,W) + \log \left(\dfrac{1}{1-\eps'}\right).
\] 
Taking  arbitrary representations of the uniform measure $\lambda$ 
as a finite convex combination of probability measures $\mu$, we derive that
\[
\log \left(\dfrac{1}{1-\eps'}\right) \leq 	\varphi_*(W')\leq \varphi_*(W) + \log \left(\dfrac{1}{1-\eps'}\right).
\]
In particular, the perturbed graphon $W'$ satisfies the additional assumption 
$\varphi_*(W') = \Omega(1)$. If Theorem \ref{T:sequences1} is true for this case, we establish whp
\[
\chi(\G') \leq O(\eps') \dfrac{n}{\log n},\qquad \text{where $\G' \sim \calG(n,W').$}
\]
Since $W'(x,y) \geq W(x,y)$, there is a coupling $(\G,\G')$ such that 
$\G \sim \calG(n,W)$  and $\G \subset \G'$. Then, 
$
\chi(\G) \leq \chi(\G').
$
Taking $\eps'$ arbitrary small, we show $\chi(\G) = o(1) \dfrac{n}{\log n}$ whp, as required.
This
justifies our claim about the additional assumption $\varphi_*(W) = \Omega(1)$.

Now, we proceed to the proof of  Theorem \ref{T:sequences1} assuming 
$\varphi_*(W) = \Omega(1)$. 
Let $X_1,\ldots, X_n$ be   sampled uniformly independently  from $[0,1]$
and $\X = (X_1,\ldots,X_n)^T \in [0,1]^n$.
Let  $\uvec^W$ and   $\n(\X,W)$ be defined according to \eqref{def:uW} and \eqref{n-def}. 
If  we establish that whp
\begin{equation}\label{eq:w*-conc}
	w_*(\n(\X,W)) =  n \cdot w_*(\uvec^W) +o(n),
\end{equation}
then the result follows by Corollary \ref{C:cond1} and Lemma \ref{L:phi-relation}.

For \eqref{eq:w*-conc}, we first prove the following lemma.
Let   $ \boldsymbol{1} := (1,\ldots, 1)^T \in \Naturals^k$.
\begin{lemma}\label{L:con1}
	Let $k = k(n)$ and $\delta = \delta(n)$ be such that 
	$k = n^{O(1)}$  and $1 \gg \delta  \gg  \frac{1}{\log n}$.
	Then,  
	for any sequence   $W= W(n)$ of $k$-block graphons,  the inequalities 
	\[
	(1-  \delta)  n\cdot \uvec^{\mu} - \log^2 n\cdot \boldsymbol{1}  \preceq \n(\X,W)   \preceq (1+\delta)  n\cdot \uvec^{\mu} + \log^2 n\cdot \boldsymbol{1}
	\]
	hold with probability  $1 - e^{-\omega(\log n)}$.
\end{lemma}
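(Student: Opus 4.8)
The plan is to read the vector inequality coordinatewise and apply a Chernoff/Bernstein tail bound to each block count, finishing with a union bound over the $k$ blocks. First I would fix the partition $(S_i)_{i\in[k]}$ underlying $W = W_n$ and recall that the $i$-th coordinate of $\n(\X,W)$ is $n_i := |\{t\in[n] : X_t\in S_i\}|$, which is $\mathrm{Bin}(n,\lambda(S_i))$-distributed with mean $\mu_i := n\lambda(S_i)$, the $i$-th coordinate of $n\,\uvec^W$ (the coordinates of $\n(\X,W)$ are negatively correlated, but only their marginal tails will be used). Since $\preceq$ is the coordinatewise order, the asserted two-sided vector bound is exactly the family of scalar events $|n_i - \mu_i| \le \delta\mu_i + \log^2 n$, $i\in[k]$, holding simultaneously; so it suffices to bound, for each fixed $i$, the probability that $n_i$ deviates from $\mu_i$ by more than $t_i := \delta\mu_i + \log^2 n$.

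For a single block I would feed $t_i$ into the standard binomial tails. The structural point is that $t_i \ge \delta\mu_i$, hence $\mu_i \le t_i/\delta$; together with $\delta < 1$ (valid for all large $n$ by hypothesis), this makes the denominators in both the upper tail $\Pr[n_i \ge \mu_i + t_i] \le \exp\!\big(-t_i^2/(2\mu_i + \tfrac{2}{3}t_i)\big)$ and the lower tail $\Pr[n_i \le \mu_i - t_i] \le \exp\!\big(-t_i^2/(2\mu_i)\big)$ (the latter trivially $0$ when $t_i > \mu_i$) be $O(t_i/\delta)$, so that both exponents are $\Omega(\delta t_i) \ge \Omega(\delta\log^2 n)$. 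Hence $\Pr[\,|n_i-\mu_i| > t_i\,] \le 2\exp(-c\,\delta\log^2 n)$ for an absolute constant $c > 0$.

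To conclude I would invoke the two remaining hypotheses: $\delta \gg 1/\log n$ forces $\delta\log^2 n = \omega(\log n)$, and $k = n^{O(1)}$ forces $\log k = O(\log n)$, so a union bound over the $k$ coordinates (and the two tails) bounds the total failure probability by $2k\exp(-c\,\delta\log^2 n) = \exp\!\big(O(\log n) - c\,\delta\log^2 n\big) = \exp(-\omega(\log n))$, as required. I do not foresee any genuine obstacle here; the only point needing care is the packaging of the multiplicative slack $\delta\mu_i$ and the additive slack $\log^2 n$ into a single deviation $t_i$ so that the resulting Chernoff exponent is $\Omega(\delta\log^2 n)$, and then checking that this beats the union-bound cost $\log k = O(\log n)$ — which is precisely where the assumptions $\delta \gg 1/\log n$ and $k = n^{O(1)}$ enter (the condition $\delta \ll 1$ is used only to simplify the Bernstein denominator).
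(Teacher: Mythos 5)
Your proof is correct and follows essentially the same route as the paper: read the vector inequality coordinatewise, observe $n_i\sim\mathrm{Bin}(n,\lambda(S_i))$, apply a two-sided Chernoff/Bernstein bound with the deviation $t_i=\delta\,\E n_i+\log^2 n$ to get a tail of $\exp(-\Omega(\delta\log^2 n))=\exp(-\omega(\log n))$, and finish with a union bound over the $k$ coordinates absorbed by $k=n^{O(1)}$. The only difference is cosmetic — you spell out the union-bound bookkeeping and the role of $\delta\ll 1$ in the Bernstein denominator slightly more explicitly than the paper does.
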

\begin{proof}
	Recall from  \eqref{n-def} that   $\n(\X,W)  = (n_1, \ldots,n_k )^T$, where  
	$n_t   = \left|\{i \in [n] \st X_i  \in S_t\}\right|. 
	$
	For $t \in [k]$, observe that  $n_t  \sim \text{Bin}(n, \mu(S_t))$. 
	Applying the Chernoff bound (see, for example, \cite[Theorem 2.1]{JLR2000})
	with $\zeta = \delta \E  n_t + \log^2 n  $, we get that 
	\[
	\Pr(|n_t - \E n_t| > \zeta ) \leq 
	2 e^{-  \frac{\zeta^2}{ 2 (\E n_t + \zeta/3)} }
	=	e^{-\Omega(\zeta \delta)} = e^{-\omega(\log n)}.
	\]
	Applying the union bound for all coordinates $t\in [k]$, we complete the proof.
\end{proof}

Next,  combining the monotonicity and scaling properties and the triangle inequality of Theorem \ref{l:Qmatrix}(a,e) and  the upper bound of  Lemma \ref{L:con1} with any $1\gg \delta \gg \frac{1}{\log n}$, 
we get that, with probability 
$1 - e^{-\omega(\log n)}$, 
\begin{align*}
	w_*(\n (\X,W))
	\leq(1+ o(1)) n \cdot  w_*( \uvec^{\mu}) + \log^2n  \cdot w_*(\boldsymbol{1}).
\end{align*}
Using the assumption that $\sup_{x,y\in [0,1]}W(x,y) \leq 1-\eps$ and   the upper bounds of Theorem \ref{l:Qmatrix}(d), we find  that
\[
w_*( \uvec^{\mu})  = O(1)  \qquad \text{and} \qquad  w_*(\boldsymbol{1}) = O(k).
\]
Thus, we derive that
\[
w_*(\n (\X,W),Q) \leq n\cdot w_*(\uvec^{\mu})  +o(n).
\]

To establish the lower bound of \eqref{eq:w*-conc}, we use the other part of   Lemma \ref{L:con1} to conclude  that
\[
(1 +o(1))  n\cdot \uvec^{\mu}  \preceq \n(\X,W)   + \log^2 n\cdot \boldsymbol{1}.
\]
Combining the monotonicity and scaling properties and the triangle inequality of Theorem \ref{l:Qmatrix}(a,e), we get that
\[
w_*(\n (\X,W)) +    \log^2n  \cdot w_*(\boldsymbol{1}) 
\geq     (1+ o(1)) n \cdot  w_*( \uvec^{\mu}).
\]
The rest of the argument is similar to the proof of the upper bound. This completes the proof of \eqref{eq:w*-conc} and of Theorem \ref{T:sequences1}.

\subsection{Proof of Theorem \ref{thm:balanced} and Corollaries \ref{C:block1} and \ref{C:block2}} \label{S:balanced}
To prove  Theorem \ref{thm:balanced} and its corollaries, we need  the following two lemmas that hold for arbitrary $k\times k$ symmetric matrix $Q$ with non-negative entries.  Let   $w,w_*: \pReals^k  \to  \pReals$ be the functions defined by \eqref{w_def} and \eqref{def:mu-star} for such a matrix $Q$.
\begin{lemma}\label{l:identity}
	The following hold.
	\begin{itemize}
		\item[(a)] Let $\zvec, \zvec', \avec \in \pReals^k$. 
		If $\avec \preceq \zvec '\preceq \avec +\zvec$ and $\dfrac{\zvec'^TQ\zvec'}{\|\zvec'\|}\geq \dfrac{\zvec^TQ\zvec}{\|\zvec\|} \geq \dfrac{(\zvec+\avec)^TQ(\zvec+\avec)}{\|\zvec+\avec\|}$ then 
		\[\dfrac{\zvec'^TQ\zvec'}{\|\zvec'\|} \leq \dfrac{(\zvec'-\avec)^TQ(\zvec'-\avec)}{\|\zvec'\|}.\]
		\item[(b)] For any $\zvec,\zvec'\in \pReals^k$, we have
		\[
		\dfrac{\zvec^T \, Q\,  \zvec}{\|\zvec\|} + 
		\dfrac{\zvec'^T \, Q\,  \zvec'}{\|\zvec'\|}
		-        \dfrac{(\zvec+\zvec')^T \, Q\,  (\zvec+\zvec')}{\|\zvec+\zvec'\|}
		\\ = \dfrac{\|\zvec\|\|\zvec'\| }{\|\zvec\|+\|\zvec'\|} 
		\left(\dfrac{\zvec}{\|\zvec\|} - \dfrac{\zvec'}{\|\zvec'\|}\right)^T \, Q\, 
		\left(\dfrac{\zvec}{\|\zvec\|} - \dfrac{\zvec'}{\|\zvec'\|}\right).
		\]
	\end{itemize}
\end{lemma}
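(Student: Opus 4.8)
I would prove part~(b) first, since it is a short algebraic identity and gets used freely inside part~(a). Write $g(\yvec):=\yvec^T Q\yvec/\|\yvec\|$ for $\yvec\in\pReals^k\setminus\{\boldsymbol{0}\}$, put $a=\|\zvec\|$, $b=\|\zvec'\|$ (so $\|\zvec+\zvec'\|=a+b$ because all entries are non-negative), and set $u=\zvec/a$, $v=\zvec'/b$. Then $g(\zvec)+g(\zvec')-g(\zvec+\zvec')=a\,u^T Qu+b\,v^T Qv-(au+bv)^T Q(au+bv)/(a+b)$; multiplying through by $a+b$ and cancelling the $a^{2}u^T Qu$ and $b^{2}v^T Qv$ terms leaves exactly $ab\,(u^T Qu-2u^T Qv+v^T Qv)=ab\,(u-v)^T Q(u-v)$, which after dividing by $a+b$ is the claimed identity. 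This takes two or three lines.

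For part~(a) I set $\wvec:=\zvec'-\avec$; the hypothesis $\avec\preceq\zvec'\preceq\avec+\zvec$ is exactly $\boldsymbol{0}\preceq\wvec\preceq\zvec$. Since the two fractions in the conclusion share the denominator $\|\zvec'\|$, the conclusion is equivalent to $\zvec'^T Q\zvec'\le\wvec^T Q\wvec$, i.e.\ to $2\,\avec^T Q\wvec+\avec^T Q\avec\le 0$. Because $Q$ has non-negative entries and $\avec,\wvec\succeq\boldsymbol{0}$, the left-hand side is also $\ge 0$, so the statement is really the rigidity assertion $\avec^T Q(\wvec+\avec)=0$; this already shows that both ratio hypotheses are needed and that the argument must be indirect rather than a single inequality chase. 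The plan is to unfold the two ratio hypotheses by clearing denominators. Cross-multiplying $g(\zvec)\ge g(\zvec+\avec)$ and simplifying yields $2\,\avec^T Q\zvec+\avec^T Q\avec\le(\|\avec\|/\|\zvec\|)\,\zvec^T Q\zvec$; equivalently, via part~(b), the ``defect'' $D:=(\zvec/\|\zvec\|-\avec/\|\avec\|)^T Q(\zvec/\|\zvec\|-\avec/\|\avec\|)$ is $\ge 0$, so $\theta\mapsto g(\zvec+\theta\avec)$ is convex on $[0,\infty)$. Then I would cross-multiply $g(\zvec')\ge g(\zvec)$, substitute $\zvec'=\wvec+\avec$, $\zvec=\wvec+(\zvec-\wvec)$ and $\|\zvec'\|=\|\wvec\|+\|\avec\|$, and combine the resulting inequality with the previous bound, with the monotonicity $\wvec^T Q\wvec\le\zvec^T Q\zvec$ (valid since $Q\ge\boldsymbol{0}$ entrywise and $\boldsymbol{0}\preceq\wvec\preceq\zvec$), and with the scaling and triangle inequalities of Theorem~\ref{l:Qmatrix}(a,e), aiming to force $2\,\avec^T Q\wvec+\avec^T Q\avec\le 0$.

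The main obstacle is this last assembly in part~(a): the two ratio hypotheses constrain the $Q$-interaction of $\avec$ from opposite sides (one through $\zvec$, one through $\zvec'=\wvec+\avec$), so the cancellations have to be organised carefully, and the componentwise sandwich $\boldsymbol{0}\preceq\wvec\preceq\zvec$ is precisely what lets one transfer the control of $\avec$ ``in the $\zvec$-direction'' coming from $g(\zvec)\ge g(\zvec+\avec)$ to the $\wvec$-direction appearing in the conclusion. If $Q$ happened to be positive semidefinite on $\{\,\yvec:\sum_i y_i=0\,\}$ one could instead invoke the pseudodefinite property, Theorem~\ref{l:Qmatrix}(c), and collapse the whole argument, but here $Q$ is only assumed entrywise non-negative, which is why the monotonicity facts above must do the work. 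Part~(b), in contrast, is routine and will be dispatched in a couple of lines before starting on~(a).
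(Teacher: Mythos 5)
Your verification of part (b) is correct and essentially the paper's approach (expand, clear denominators, cancel). For part (a), you correctly noticed that with $\|\zvec'\|$ in both denominators the conclusion collapses to $\avec^TQ(2\zvec'-\avec)\le 0$, which by entrywise non-negativity of $Q$ and $\avec,\zvec'-\avec\succeq\boldsymbol{0}$ can only hold with equality. That observation should have told you the printed denominator is a typo; instead you set out to prove the too-strong claim, and it is in fact false. Take $k=2$, $Q=\operatorname{diag}(1,2)$, $\zvec=(1,0)^T$, $\avec=(0,\frac12)^T$, $\zvec'=(1,\frac12)^T$: all hypotheses hold (both ratio inequalities with equality), yet $\zvec'^TQ\zvec'=\frac32>1=(\zvec'-\avec)^TQ(\zvec'-\avec)$. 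The second denominator in the conclusion should be $\|\zvec'-\avec\|$; this is what the paper proves and what is actually invoked in Lemma~\ref{L:extended}.

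Once the conclusion is corrected, the argument is an elementary mediant chase, not the convexity assembly you sketch. Expanding $\frac{(\zvec+\avec)^TQ(\zvec+\avec)}{\|\zvec+\avec\|}=\frac{\zvec^TQ\zvec+\avec^TQ(2\zvec+\avec)}{\|\zvec\|+\|\avec\|}$ and applying the equivalence $\frac{a}{c}\ge\frac{a+b}{c+d}\iff\frac{a}{c}\ge\frac{b}{d}$ (for $a,b\ge 0$, $c,d>0$), the second ratio hypothesis yields $\frac{\zvec^TQ\zvec}{\|\zvec\|}\ge\frac{\avec^TQ(2\zvec+\avec)}{\|\avec\|}$. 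Since $\zvec'\preceq\avec+\zvec$ gives $2\zvec'-\avec\preceq 2\zvec+\avec$, and $Q$ has non-negative entries with $\avec\succeq\boldsymbol{0}$, we get $\avec^TQ(2\zvec+\avec)\ge\avec^TQ(2\zvec'-\avec)$; combining with the first ratio hypothesis gives $\frac{\zvec'^TQ\zvec'}{\|\zvec'\|}\ge\frac{\avec^TQ(2\zvec'-\avec)}{\|\avec\|}$. Running the same mediant equivalence in reverse then produces $\frac{(\zvec'-\avec)^TQ(\zvec'-\avec)}{\|\zvec'\|-\|\avec\|}\ge\frac{\zvec'^TQ\zvec'}{\|\zvec'\|}$, which is the corrected conclusion. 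Your plan, which aims to force $2\avec^TQ\wvec+\avec^TQ\avec\le 0$, is targeting an unreachable goal: in the counterexample above that quantity equals $\frac12>0$.
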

\begin{proof}
	Part (b) is given in \cite[Lemma 29]{IsaevKang-SBM}. For part (a),
	by assumptions, we have
	\[
	\dfrac{\zvec^T Q \zvec}{\|\zvec\|}   \geq\dfrac{(\zvec+\avec)^T Q (\zvec+\avec)}{\|\zvec+\avec\|} = 
	\dfrac{\zvec^T Q \zvec + \avec^T Q(2 \zvec + \avec) }
	{\|\zvec\| +\|\avec\|}.
	\]
	Since $a/c \geq (a+b)/(c+d)$ is equivalent $a/c \geq b/d$ for $a,b\geq 0$ and $c,d>0$, we derive that  
	\[
	\dfrac{\zvec^T Q \zvec}{\|\zvec\|} \geq \dfrac{\avec^T Q(2 \zvec + \avec) }{ \|\avec\|}. 
	\]
	Since $\zvec '\preceq \avec +\zvec$ and $Q$ has non-negative enties, we get that
	\[
	\dfrac{\zvec'^T Q \zvec'}{\|\zvec'\|} \geq  \dfrac{\avec^T Q(2 \zvec + \avec) }{ \|\avec\|} 
	\geq 
	\dfrac{\avec^T Q(2 \zvec' - \avec) }{ \|\avec\|}. 
	\]
	Then, 
	\[
	\dfrac{(\zvec'-\avec)^T Q (\zvec'-\avec)} {\|\zvec'-\avec\|}
	= \dfrac{\zvec'^T Q \zvec' - \avec^T Q(2 \zvec' - \avec) }{\|\zvec'\| - \|\avec\|}
	\geq \dfrac{\zvec'^T Q \zvec'}{\|\zvec'\|}
	\]
	as required. 
\end{proof}

Using Lemma \ref{l:identity}, we prove extended versions of the corner maximiser and pseudodefinite properties of Theorem \ref{l:Qmatrix}(b,c). We also establish a nesting property for corner maximisers. 

\begin{lemma}\label{L:extended} 
	Let  $\xvec =(x_1,\ldots,x_k)^T \in \pReals^k$.  Let 
	\[ 
	M_{\xvec}:= \{\zvec \in \pReals^k \st z_i\in\{0,x_i\} \text{ and }
	\dfrac{\zvec^TQ\zvec}{\|\zvec\|} = w(\xvec) \}.
	\]
	\begin{itemize}
		\item[(a)] {\rm[Nesting].}  If  $\zvec,\zvec'\in M_{\xvec}$   then 
		$
		\hat{\zvec} \in M_{\xvec}
		$
		where $\hat{\zvec}$ is defined by $\hat{z_i} := \min\{z_i,z_i'\}$ \  for all $i\in [k]$.

		\item[(b)] A vector  $\zvec \in M_{\xvec}$  if and only if 
		$z_i\in\{0,x_i\}$ for all $i \in [k]$,    
		$w(\zvec) = \dfrac{\zvec^TQ\zvec}{\|\zvec\|}$ and, for any $\zvec'\succeq \zvec$ with $z_i'\in\{0,x_i\}$,  we have 
		$\dfrac{\zvec'^TQ\zvec'}{\|\zvec'\|} \leq w(\zvec)$.

		\item[(c)] Let $\zvec$ be a vector from   $M_{\xvec}$ with the smallest $\|\zvec\|$. Then $w(\xvec) = w_*(\xvec)$ if and only if 
		$\yvec^T Q \yvec \geq 0$ for all $\yvec \in \Reals^k$ with $\sum_{i\in [k]}  y_i = 0$ and   $y_i = 0$ whenever $z_i =0$. 
	\end{itemize}
\end{lemma}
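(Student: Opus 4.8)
The plan is to establish (a) and (b) by direct manipulation of the exact identities in Lemma~\ref{l:identity}, and then to deduce (c) from the already proved Theorem~\ref{l:Qmatrix}(c) by passing to the principal submatrix of $Q$ supported on $\supp(\zvec)$. For (a), the bound $\frac{\hat{\zvec}^TQ\hat{\zvec}}{\|\hat{\zvec}\|}\le w(\xvec)$ is automatic because $\hat z_i\in\{0,x_i\}$ forces $\hat{\zvec}\preceq\xvec$; it remains to prove the reverse. I would write $\zvec=\hat{\zvec}+\avec$ and $\zvec'=\hat{\zvec}+\avec'$ with $\avec,\avec'\succeq\zerovec$ supported on disjoint coordinate sets, so that $\bar{\zvec}:=\hat{\zvec}+\avec+\avec'$ is the coordinatewise maximum of $\zvec,\zvec'$ and hence again $\preceq\xvec$, and expand the quadratic forms to obtain
\[
\bar{\zvec}^TQ\bar{\zvec}=\zvec^TQ\zvec+\zvec'^TQ\zvec'-\hat{\zvec}^TQ\hat{\zvec}+2\avec^TQ\avec',\qquad \|\bar{\zvec}\|=\|\zvec\|+\|\zvec'\|-\|\hat{\zvec}\|.
\]
Substituting $\zvec^TQ\zvec=w(\xvec)\|\zvec\|$, $\zvec'^TQ\zvec'=w(\xvec)\|\zvec'\|$ and $\bar{\zvec}^TQ\bar{\zvec}\le w(\xvec)\|\bar{\zvec}\|$ collapses everything to $\|\hat{\zvec}\|\bigl(w(\xvec)-\frac{\hat{\zvec}^TQ\hat{\zvec}}{\|\hat{\zvec}\|}\bigr)\le -2\avec^TQ\avec'\le 0$, the last step using that $Q$ has non-negative entries; hence (for $\hat{\zvec}\ne\zerovec$, which is automatic when $Q$ has strictly positive entries, the setting relevant to Conjecture~\ref{Con:main}) $\frac{\hat{\zvec}^TQ\hat{\zvec}}{\|\hat{\zvec}\|}=w(\xvec)$ and $\hat{\zvec}\in M_{\xvec}$. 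Comparing norms, this shows that a minimal-norm element $\zvec$ of $M_{\xvec}$ satisfies $\zvec\preceq\zvec'$ for every $\zvec'\in M_{\xvec}$; in particular $\supp(\zvec)\subseteq\supp(\zvec')$ for all corner maximizers, a fact I would reuse in (c).

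For (b), the forward implication is a short pinching argument: if $\zvec\in M_{\xvec}$ then $\zvec\preceq\xvec$ gives $w(\zvec)\le w(\xvec)$, while $\frac{\zvec^TQ\zvec}{\|\zvec\|}\le w(\zvec)$ together with $\frac{\zvec^TQ\zvec}{\|\zvec\|}=w(\xvec)$ forces $w(\zvec)=w(\xvec)=\frac{\zvec^TQ\zvec}{\|\zvec\|}$, and any corner $\zvec'\succeq\zvec$ with $z_i'\in\{0,x_i\}$ lies $\preceq\xvec$, so $\frac{\zvec'^TQ\zvec'}{\|\zvec'\|}\le w(\xvec)=w(\zvec)$. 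For the converse, given the three conditions I would take a corner maximizer $\uvec$ of $\xvec$ from Theorem~\ref{l:Qmatrix}(b); after the trivial case $\uvec\preceq\zvec$, set $\vvec:=\max(\uvec,\zvec)$ and $\avec:=\vvec-\zvec\succeq\zerovec$ (supported where $z_i=0$, so $\avec\preceq\uvec\preceq\avec+\zvec$). The chain $\frac{\uvec^TQ\uvec}{\|\uvec\|}=w(\xvec)\ge w(\zvec)=\frac{\zvec^TQ\zvec}{\|\zvec\|}\ge\frac{\vvec^TQ\vvec}{\|\vvec\|}$ verifies the hypotheses of Lemma~\ref{l:identity}(a) with the triple $(\zvec,\avec,\uvec)$, which yields $\frac{\uvec^TQ\uvec}{\|\uvec\|}\le\frac{(\uvec-\avec)^TQ(\uvec-\avec)}{\|\uvec\|}\le\frac{(\uvec-\avec)^TQ(\uvec-\avec)}{\|\uvec-\avec\|}\le w(\zvec)$, the middle step because $\|\uvec-\avec\|\le\|\uvec\|$ and $Q$ has non-negative entries, the last because $\uvec-\avec$ is a sub-corner of $\zvec$. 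Thus $w(\xvec)\le w(\zvec)$, forcing equality and $\zvec\in M_{\xvec}$ (the case $\uvec-\avec=\zerovec$ only arises when $w(\xvec)=0$, which is trivial).

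For (c), since $\zvec\in M_{\xvec}$ is a corner we have $z_i=x_i$ on $I_0:=\supp(\zvec)$ and $z_i=0$ elsewhere, so $\zvec$ is the restriction of $\xvec$ to $I_0$ padded by zeros. Writing $Q_0$ for the principal submatrix on $I_0$ and $\xvec_0$ for this restriction, one checks: $w(\xvec)=\frac{\xvec_0^TQ_0\xvec_0}{\|\xvec_0\|}$ equals the value $w(\xvec_0)$ of the sub-problem; by part (a) every corner maximizer of $\xvec$ contains $I_0$, so $\xvec_0$ is the unique corner maximizer of the sub-problem and every proper sub-corner of $\xvec_0$ loses by a positive margin; $w_*$ of a vector supported on $I_0$ agrees with the sub-problem $w_*$; and projecting any system for $\xvec$ onto $I_0$ does not raise $w$, so $w_*(\xvec_0)\le w_*(\xvec)$. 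The listed condition is precisely that $Q_0$ is positive semidefinite on $\{\yvec_0:\sum_i(y_0)_i=0\}$. The ``if'' direction is then immediate from Theorem~\ref{l:Qmatrix}(c) applied to $(Q_0,\xvec_0)$: it gives $w(\xvec_0)=w_*(\xvec_0)$, whence $w_*(\xvec)\ge w_*(\xvec_0)=w(\xvec_0)=w(\xvec)\ge w_*(\xvec)$. For the ``only if'' direction I would argue contrapositively: if $Q_0$ fails the semidefiniteness, pick $\yvec_0$ with $\sum_i(y_0)_i=0$ and $\yvec_0^TQ_0\yvec_0<0$; since $\xvec_0$ has all coordinates strictly positive, $\uvec^{\pm}:=\frac12(\xvec_0\pm s\yvec_0)\succeq\zerovec$ for small $s>0$, both with $1$-norm $\frac12\|\xvec_0\|$ and summing to $\xvec_0$, and Lemma~\ref{l:identity}(b) gives $\frac{(\uvec^+)^TQ_0\uvec^+}{\|\uvec^+\|}+\frac{(\uvec^-)^TQ_0\uvec^-}{\|\uvec^-\|}=w(\xvec_0)+\frac{s^2}{\|\xvec_0\|}\yvec_0^TQ_0\yvec_0<w(\xvec_0)$; the uniqueness of $\xvec_0$ as corner maximizer keeps every proper sub-corner of $\uvec^{\pm}$ bounded away from the ambient ratio for small $s$, so $w(\uvec^{\pm})=\frac{(\uvec^{\pm})^TQ_0\uvec^{\pm}}{\|\uvec^{\pm}\|}$, and therefore $w_*(\xvec_0)<w(\xvec_0)=w(\xvec)$.

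The remaining point — promoting $w_*(\xvec_0)<w(\xvec)$ to $w_*(\xvec)<w(\xvec)$, i.e.\ showing $w_*(\xvec)=w_*(\xvec_0)$ (one inequality of which is already produced by the ``if'' case) — is the step I expect to be the main obstacle. Concretely, one must distribute the coordinates of $\xvec$ lying outside $I_0$ among the pieces of a near-optimal partition of $\xvec_0$, rather than carrying them as a separate vector, in such a way that the $w$-sum does not inflate. The leverage is again the strict inequality from (a) and (b): because every proper sub-corner of $\xvec_0$ beats nothing, attaching a sufficiently thin amount of the ``outside'' mass to a piece cannot create a new, better sub-corner, so $w$ of each enlarged piece stays at its old value up to terms controlled by the attached amounts; turning this into a quantitative choice of partition and weights, tracked through Lemma~\ref{l:identity}(b), is the technical heart of (c).
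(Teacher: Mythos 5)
Your proofs of (a) and (b) are sound and run along essentially the same lines as the paper's. For (a) you expand the quadratic form directly rather than invoking Lemma~\ref{l:identity}(a) with $\avec=\zvec'-\hat{\zvec}$, but the calculation is the same in substance (the paper has the same implicit side issue when $\hat{\zvec}=\zerovec$, so that is not a point against you). For (b) your choice $\avec=\max(\uvec,\zvec)-\zvec$ coincides coordinatewise with the paper's $\zvec'-\hat{\zvec}$, and your chain of inequalities is the same pinching argument. Your ``if'' direction of (c) via the principal submatrix $Q_0$ is also equivalent to the paper's, which uses the $k\times k$ matrix $\hat Q$ with the rows and columns outside $I_0$ zeroed out (so that $\hat w_*(\xvec)$ is precisely your $w_*(\xvec_0)$).

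The genuine gap is exactly where you flag it: in the ``only if'' direction you derive $w_*(\xvec_0)<w(\xvec_0)$ for the sub-problem and then need $w_*(\xvec)\le w_*(\xvec_0)$, which is not a consequence of anything you have proved and is in fact the wrong quantity to aim for. The reason it is hard is that you have descended to the sub-problem on $I_0$ too early. The paper perturbs $\xvec$ itself rather than $\xvec_0$: set $\xvec^{\pm}:=\xvec\pm\eps\yvec$ (well-defined in $\pReals^k$ for small $\eps>0$ because $\yvec$ is supported on $I_0$ where $x_i>0$), so that $\xvec=\tfrac12\xvec^++\tfrac12\xvec^-$. By part (a) and minimality, $\supp(\zvec)=I_0\subseteq\supp(\zvec')$ for every $\zvec'\in M_{\xvec}$; since $\supp(\yvec)\subseteq I_0$, a corner of $\xvec^{\pm}$ with support $S\supseteq I_0$ equals $\xvec\cdot\1_S\pm\eps\yvec$. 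A continuity argument then shows that for small enough $\eps$ the corner maximiser of $\xvec^{\pm}$ is $\zvec^{\pm}\pm\eps\yvec$ for some $\zvec^{\pm}\in M_{\xvec}$, and because $\sum_i y_i=0$ we have $\|\zvec^{\pm}\pm\eps\yvec\|=\|\zvec^{\pm}\|$. Applying Lemma~\ref{l:identity}(b) to the pair $\zvec^++\eps\yvec$ and $\zvec^--\eps\yvec$ gives
\[
w(\xvec^+)+w(\xvec^-)-\frac{(\zvec^++\zvec^-)^TQ(\zvec^++\zvec^-)}{\|\zvec^++\zvec^-\|}
=\frac{\|\zvec^+\|+\|\zvec^-\|}{\|\zvec^+\|\,\|\zvec^-\|}\,\eps^2\,\yvec^TQ\yvec<0,
\]
and since $\zvec^++\zvec^-\preceq 2\xvec$ the middle term is at most $w(2\xvec)$. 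By the scaling property of Theorem~\ref{l:Qmatrix}(a), this yields $w(\tfrac12\xvec^+)+w(\tfrac12\xvec^-)<w(\xvec)$, a two-piece system for $\xvec$ beating $w(\xvec)$, hence $w_*(\xvec)<w(\xvec)$ directly. This sidesteps the $w_*(\xvec)$ versus $w_*(\xvec_0)$ comparison entirely, which is the missing step you were trying to engineer.
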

\begin{proof}
	For part (a), we use  Lemma \ref{l:identity}(a) with $\avec:= \zvec' - \hat{\zvec}$. Then, we get that 
	\[
	\dfrac{\hat{\zvec}^TQ\hat{\zvec}}{\|\hat{\zvec}\|}  \geq \dfrac{\zvec'^TQ\zvec'}{\|\hat{\zvec'}\|} = w(\xvec). 
	\]
	Recalling
	\[
	w(\xvec):= \max_{\boldsymbol{0}\preceq \yvec \preceq \xvec} \dfrac{\yvec^TQ\yvec}{\|\yvec\|},
	\]
	and observing $\hat{\zvec}\preceq  \xvec$, we prove  (a).
	
	One direction in part (b) is straightforward. Indeed,    if $\dfrac{\zvec^TQ\zvec}{\|\zvec\|} = w(\xvec)$ then 
	$\dfrac{\zvec^TQ\zvec}{\|\zvec'\|} \leq \dfrac{\zvec^TQ\zvec}{\|\zvec\|}$
	for all $\zvec' \preceq \xvec$.
	Using the monotonicity property of Theorem \ref{l:Qmatrix}(a), we also find that $w(\zvec) = w(\xvec)$.  
	For the other direction, by the corner maximiser property of Theorem \ref{l:Qmatrix}(b), there is  $\zvec' \in M_{\xvec}$. If $\zvec \preceq \zvec'$ or $\zvec' \preceq \zvec$ then, by assumptions and the definition of $w(\cdot)$, we get $\dfrac{\zvec'^TQ\zvec'}{\|\zvec'\|}  \leq w(\zvec)$, which implies $w(\zvec) = w(\xvec)$. Otherwise, consider $\avec:=\zvec'-\hat{\zvec}$, where $\hat{\zvec}$ is defined as in part (a). Applying Lemma \ref{l:identity}(a), we get that 
	\[
	\dfrac{(\zvec'-\avec)^T Q (\zvec'-\avec)} {\|\zvec'-\avec\|}
	\geq \dfrac{\zvec'^TQ\zvec'}{\|\zvec'\|} = w(\xvec).
	\]
	Therefore, $\zvec'-\avec \in M_{\xvec}$. Observing that $\zvec'-\avec \preceq \zvec$, we get $w(\zvec) = w(\xvec)$ as required.

	For part (c), we take $\zvec$ to be a vector with smallest non-zero $\|\zvec\|$
	such that $z_i \in \{0,x_i\}$  
	and  $w(\xvec) = \dfrac{\zvec^T \, Q\,  \zvec}{\|\zvec\|}$.
	Let $\hat{Q}$ be the matrix obtained from $Q$ by replacing with zeros all   columns and rows  corresponding to zero components of $\zvec$. Let $\hat{w}$  and $\hat{w}_*$ be the functions defined by \eqref{w_def} and \eqref{def:mu-star} for matrix $\hat{Q}$. Note that 
	\[
	w(\xvec) = w(\zvec) = \hat{w}(\zvec)=\hat{w}(\xvec) \qquad \text{and} \qquad w_*(\xvec) \geq \hat{w}_*(\xvec).
	\]
	If   $\yvec^T Q \yvec \geq 0$ for all $\yvec \in \Reals^k$ with $\sum_{i\in [k]}  y_i = 0$ and   $y_i = 0$ whenever $z_i =0$ then 
	$\yvec^T \hat{Q} \yvec \geq 0$ for all $\yvec \in \Reals^k$ with $\sum_{i\in [k]}  y_i = 0$. Applying the pseudodefinite property of Theorem \ref{l:Qmatrix}(c), we get that    $\hat{w}_*(\xvec) = \hat{w}(\xvec)$. Thus,
	\[
	\hat{w}_*(\xvec) = \hat{w}(\xvec)  = w(\xvec) \geq w_*(\xvec) \geq \hat{w}_*(\xvec), 
	\]
	which implies that  $ w(\xvec) = w_*(\xvec)$ as required.
	
	The proof of  the other direction of part (c)  is  by contradiction to our assumption that $w_*(\xvec)=w(\xvec)$. 
	Assume that there is $\yvec$  with $\sum_{i\in [k]}  y_i = 0$ and   $y_i = 0$ whenever $z_i =0$  such that 
	$\yvec^T Q \yvec < 0$. 
	Observe that $\zvec \preceq \zvec'$ for any $\zvec' \in M_{\xvec}$.
	We have  $\xvec = \frac12 \xvec^{+} +\frac12 \xvec^-$, where $\xvec^{\pm} =   \xvec \pm \eps \yvec$. Using the corner maxmiser property of Theorem \ref{l:Qmatrix}(b) for $\xvec^+$ and $\xvec^-$ and taking $\eps$ sufficiently small, we 
	can find some $\zvec^+, \zvec^- \in M_{\xvec}$ such that
	\[
	w(\xvec^+) = \dfrac{(\zvec^++\eps \yvec)^T Q (\zvec^++\eps \yvec)}{\|\zvec^+ +\eps \yvec\|}, \qquad 
	w(\xvec^-) = \dfrac{(\zvec^- -\eps \yvec)^T Q (\zvec^--\eps \yvec)}{\|\zvec^--\eps \yvec\|}.
	\]
	By the choice of $\yvec$,  note that $\|\zvec^+ +\eps \yvec\| = \|\zvec^+\|$
	and  $\|\zvec^- -\eps \yvec\| = \|\zvec^-\|$.  Applying Lemma~\ref{l:identity}(b), we get that 
	\begin{align*}
		\dfrac{(\zvec^++\eps \yvec)^T Q (\zvec^++\eps \yvec)}{\|\zvec^+ +\eps \yvec\|} + \dfrac{(\zvec^--\eps \yvec)^T Q (\zvec^--\eps \yvec)}{\|\zvec^--\eps \yvec\|} - \dfrac{(\zvec^++\zvec^-)^T Q (\zvec^++\zvec^-)}{\|\zvec^++\zvec^-\|} =  \dfrac{\|\zvec^+\| +\|\zvec^-\|}{
			\|\zvec^+\|\|\zvec^-\|
		}\eps^2 \yvec^T Q\yvec. 
	\end{align*}
	Since $\yvec^TQ \yvec<0$ and $\zvec^+ +\zvec^- \preceq 2\xvec$, we get that
	\[
	w( \xvec^+) + w(\xvec^-)
	< \dfrac{(\zvec^++\zvec^-)^T Q (\zvec^++\zvec^-)}{ \|\zvec^++\zvec^-\|}
	\leq w(2\xvec).
	\]
	Using the scaling property of Theorem \ref{l:Qmatrix}(a), we conclude that 
	\[
	w(\tfrac12  \xvec^+) + w(\tfrac12 \xvec^-) < w(\xvec).
	\]
	This contradicts to the assumption $w_*(\xvec) = w(\xvec)$, recalling the definition of $w_*(\cdot)$ from \eqref{def:mu-star}.
\end{proof}

Now we are ready to prove Theorem \ref{thm:balanced} and its corollaries.

\begin{proof}[Proof of Theorem \ref{thm:balanced}.]
	From Lemma \ref{L:phi-relation}, we know that $\varphi_*(W) = \varphi(W)$ 
	is equivalent to $w_*(\uvec^W) = w(\uvec^W)$, where $\uvec^W$ is defined by \eqref{def:uW}. We complete the proof by applying   Lemma \ref{L:extended}(c) for $\xvec= \uvec^W$.  
\end{proof}

\begin{proof}[Proof of Corollary \ref{C:block1}]
	In this case, we have  
	\[
	Q(W) = I \,\log \dfrac{1}{1-p_0} + (J-I) \log \dfrac{1}{1-p}
	= I\,\log \dfrac{1-p}{1-p_0} + J \, \log \dfrac{1}{1-p},
	\] where $I$ is the indentity matrix and $J$ is  the matrix with all entries equal $1$. Since $p\leq p_0$, we get that $Q(W)$ is a positive semidefinite matrix.
	Combining Theorems \ref{T:block} and \ref{thm:balanced}, we get that whp
	\[
	\chi(\G) = (\varphi(W)+o(1))\dfrac{n}{2\log n}. 
	\]
	Using the corner maximiser property of Lemma \ref{l:Qmatrix}(b), we obtain
	\[
	\varphi(W) = w(\uvec^W) = \max_{U \subseteq [k]}  R(U),
	\]
	where 
	\[
	R(U) := \dfrac{\sum_{j \in U} \ell_j^2}{\sum_{j\in U} \ell_j}  \log \dfrac{1-p}{1-p_0}
	+\sum_{j\in U} \ell_j\log \dfrac{1}{1-p}.
	\]
	It remains to show that the maximum of $R(U)$ occurs at $U = [i]$ for some $i\in [k]$. 
	
	Indeed, suppose there is  $a\in [k]\setminus U$ and $b >a$ such that $b \in U$. 
	Let
	\[ 
	S_1 := \sum_{j \in U\setminus\{b\}} \ell_j, \qquad
	S_2 := \sum_{j \in U\setminus\{b\}} \ell_j^2.
	\]
	Consider the function $f:\pReals \rightarrow \pReals$ defined by
	\[
	f(x):=\dfrac{x^2 + S_2}{x +S_1}  \log \dfrac{1-p}{1-p_0}
	+(x+S_1) \log \dfrac{1}{1-p}.
	\]
	A direct computation shows that 
	\[
	f''(x) = \tfrac{2(S_2+S_1^2)}{(x+S_1)^2} \log \dfrac{1-p}{1-p_0}\geq 0.
	\]
	That is, $f(x)$ is convex. 
	Since $0\leq  \ell_b \leq \ell_a$,  we have 
	$f(\ell_b) \leq \max\{f(0), f(\ell_a)\}$. This implies that, we can either remove $b$ from $U$ or replace it by $a$ without decreasing $R(U)$. We can do it until $U$ becomes $[i]$ for some $i \in [k]$.  This completes the proof.
\end{proof}

\begin{proof}[Proof of Corollary \ref{C:block2}]
	In this case, we have  
	\[
	Q(W) =  \operatorname{diag}\left( \(\log \dfrac{1}{1-p_i}\)_{i\in [k]}\right)+ (J-I) \log \dfrac{1}{1-p}
	=  \operatorname{diag}\left( \(\log \dfrac{1-p}{1-p_i}\)_{i\in [k]}\right) + J \, \log \dfrac{1}{1-p},
	\] where  
	$\operatorname{diag}(\uvec)$  denotes the diagonal matrix with diagonal entries equal to components of $\uvec$. Since $p$ is smaller than any of $p_i$, we get that $Q(W)$ is a positive semidefinite matrix.
	Combining Theorems \ref{T:block} and \ref{thm:balanced}, we get that whp
	\[
	\chi(\G) = (\varphi(W)+o(1))\dfrac{n}{2\log n}. 
	\]
	Using the corner maximiser property of Lemma \ref{l:Qmatrix}(b), we obtain
	\[
	\varphi(W) = w(\uvec^W) = \max_{U \subseteq [k]} \left( \dfrac{1}{k|U|}  \sum_{j\in U}\log \dfrac{1}{1-p_j}
	+\dfrac{|U|-1}{k}\log \dfrac{1}{1-p}\right).
	\]
	Clearly, among all sets $U$ of size $i$ the maximum of the expression above occurs when  the  numbers  $p_j$ for $j\in U$ are as big as possible. This occurs when $U=[i]$, that is,   we take $p_1,\dots,p_i$. The claimed formula follows.
\end{proof}


\subsection{Chromatic number of the graphon from Figure \ref{f:block}}\label{S:example}

From Theorem \ref{T:block} and Lemma \ref{L:phi-relation}, we know that  whp
\begin{equation}\label{ex1}
	\chi(\G) = (1+o(1))w_*\left(\dfrac13 \cdot \one\right) \dfrac{n}{2\log n}, 
\end{equation}
where $\G \sim \M(n,W)$, $\one = (1,1,1)^T$, and $w_*$ is defined by \eqref{def:mu-star}  for the matrix 
\[
	Q = Q(W) = \log 2 \cdot
	\begin{pmatrix}
			1 &1& 2\\
			1 &2 &1\\
			2& 1 &3
		\end{pmatrix}.
\]
Using the scaling property and the existence of minimal system of  $k$ vectors from Theorem~\ref{l:Qmatrix}(a,f), we get that
\begin{equation}\label{ex2}
	w_*\left(\dfrac13 \cdot \one\right)  = \dfrac13 \left(w(\xvec^{(1)}) + w(\xvec^{(2)}) + w(\xvec^{(3)})\right)
\end{equation}
for some $\xvec^{(1)},\xvec^{(2)},\xvec^{(3)} \in \pReals^3$ and 
$\xvec^{(1)} +\xvec^{(2)}+\xvec^{(3)} = \one$. 
 Note that, for each $i  \in [3]$,
\[
		w_*(\xvec^{(i)}) = w(\xvec^{(i)}),
\]
since otherwise, we could have split $\xvec^{(i)}$ and reduce the value of 
$	w_*\left(\dfrac13 \cdot \one\right) $. Next step is to classify all $\xvec$ such that 
$w_*(\xvec)=w(\xvec)$ using Lemma \ref{L:extended}. 
\begin{lemma}\label{l:x123}
	 We have	$w_*(\xvec)=w(\xvec) = \log 2$ if and only if $\xvec = (x_1,x_2,x_3)^T \in \pReals^3$ lies  in the union of the following curves:
	   \begin{itemize}
	   	    \item[(i)] $x_3 = \frac13$, $x_1 = 0$, $x_2 \leq \frac16$;
	   	    	\item[(ii)] $2x_2^2 + 2x_2 x_3 + 3x_3 ^2 = x_2 + x_3$, $x_1 =0$, and $x_2 \geq \frac16$;
	   		\item[(iii)]  $x_1 +3x_3 =1$ and $x_2 =0$;
	   		\item [(iv)] $x_1^2  +2x_1 x_2 + 2x_2^2 =x_1 +x_2$, $x_2>0$, and $x_3=0$.
	   	\end{itemize}
	\end{lemma}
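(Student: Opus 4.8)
The proof combines the corner--maximiser property of Theorem~\ref{l:Qmatrix}(b) with the pseudodefinite criterion of Lemma~\ref{L:extended}(c). First I would make $w$ explicit: by Theorem~\ref{l:Qmatrix}(b), $w(\xvec)$ is the largest of $\frac{\zvec^TQ\zvec}{\|\zvec\|}$ over the seven non--zero corners $\zvec$ of $\xvec$ (those with $z_i\in\{0,x_i\}$). The singleton corners give $x_1\log 2$, $2x_2\log 2$, $3x_3\log 2$; the corner $\{1,3\}$, using $x_1^2+4x_1x_3+3x_3^2=(x_1+x_3)(x_1+3x_3)$, gives the clean value $(x_1+3x_3)\log 2$; the corners $\{1,2\}$ and $\{2,3\}$ give $\frac{x_1^2+2x_1x_2+2x_2^2}{x_1+x_2}\log 2$ and $\frac{2x_2^2+2x_2x_3+3x_3^2}{x_2+x_3}\log 2$; the full corner gives $\frac{\xvec^TQ\xvec}{\|\xvec\|}$. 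Elementary algebra records that the $\{1,3\}$--value dominates the $\{1\}$-- and $\{3\}$--values, the $\{1,2\}$--value dominates the $\{1\}$-- and $\{2\}$--values, and the $\{2,3\}$--value dominates the $\{2\}$--value always and the $\{3\}$--value exactly when $2x_2\ge x_3$. Hence $w(\xvec)=\log 2$ is equivalent to all seven corner values being $\le\log 2$ with at least one equal to $\log 2$.

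For the $w_*$ side I would invoke Lemma~\ref{L:extended}(c): with $\zvec$ the minimal--norm corner maximiser, $w_*(\xvec)=w(\xvec)$ iff $\yvec^TQ\yvec\ge 0$ for all $\yvec\in\Reals^3$ with $\sum_i y_i=0$ vanishing on the zero coordinates of $\zvec$. As these subspaces are at most two--dimensional it suffices to evaluate $\yvec^TQ\yvec$ on $(1,-1,0)$, $(1,0,-1)$, $(0,1,-1)$, which equal $\log 2$, $0$, $3\log 2$ respectively, and on a general direction of $\{\sum_i y_i=0\}$, where writing $\yvec=(a+b,-a,-b)$ gives $\yvec^TQ\yvec=(a^2-2ab)\log 2$, an indefinite form. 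So the criterion holds whenever $\supp\zvec$ omits a coordinate and fails when $\zvec$ has full support.

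Now I would split on the support of $\xvec$, matching each case to one of the four curves. If $x_1=0$: with the $\{3\}$--corner as the maximiser, $3x_3\log 2=\log 2$ forces $x_3=\tfrac13$, and the bound $\le\log 2$ on the $\{2,3\}$--corner forces $x_2\le\tfrac16$, giving (i) (and $\zvec=x_3\evec_3$ gives $w_*=w$ by the previous step); with the $\{2,3\}$--corner as the maximiser one lands on $2x_2^2+2x_2x_3+3x_3^2=x_2+x_3$, where the bounds $2x_2\le1$, $3x_3\le1$ cut out exactly $x_2\ge\tfrac16$, giving (ii) ($\zvec$ of support $\{2,3\}$, test value $3\log 2>0$). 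If $x_2=0$: the $\{1,3\}$--corner as the maximiser forces $x_1+3x_3=1$, giving (iii) ($\zvec$ of support $\{1,3\}$, test value $0$). If $x_3=0$: the $\{1,2\}$--corner as the maximiser forces $x_1^2+2x_1x_2+2x_2^2=x_1+x_2$, giving (iv) ($\zvec$ of support $\{1,2\}$, test value $\log 2>0$). Ties between two corners (the junction $(0,\tfrac16,\tfrac13)$ of (i) and (ii), or the vertices $(1,0,0)$, $(0,\tfrac12,0)$, $(0,0,\tfrac13)$) collapse to a lower face and are absorbed by the same mechanism.

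The main obstacle is the case $x_1,x_2,x_3>0$, where one must check that no such vector with $w(\xvec)=\log 2$ produces points outside (i)--(iv). If the full corner is the maximiser then $\zvec=\xvec$ has full support and the indefiniteness of $Q$ on $\{\sum_i y_i=0\}$ gives $w_*(\xvec)<w(\xvec)$, so that subcase is excluded. The delicate subcases are when a pair--corner attains the maximum $\log 2$ while the third coordinate is positive: along the corresponding corner--value--one locus (the ellipse $x_1^2+2x_1x_2+2x_2^2=x_1+x_2$, the quadric $2x_2^2+2x_2x_3+3x_3^2=x_2+x_3$, or the plane $x_1+3x_3=1$) one shows that a perturbation in the missing coordinate moves the full--corner value $\frac{\xvec^TQ\xvec}{\|\xvec\|}$ in the direction dictated by a linear functional --- $4x_1+2x_2-1$, $2x_2+4x_3-1$, $2(x_1+x_2+x_3)-1$ respectively --- and one must control the sign of that functional over the admissible arc to conclude. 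I expect the $x_1+3x_3=1$ locus to be the subtlest of the three, and getting all these quadratic estimates to close while tracking which corner is the \emph{minimal--norm} maximiser (so that the correct instance of Lemma~\ref{L:extended}(c) applies) to be where most of the work lies.
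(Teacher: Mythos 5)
Your approach is the same as the paper's: compute the seven corner values (using the factorisation $x_1^2+4x_1x_3+3x_3^2=(x_1+x_3)(x_1+3x_3)$), and then apply Lemma~\ref{L:extended}(c) through the minimal-norm corner maximiser $\zvec$. The dominance relations you record among corners are correct, and your linear functional for the $x_1+3x_3=1$ case is the right one: after the factorisation, the full-corner bound $\xvec^TQ\xvec\le\|\xvec\|\log 2$ becomes $2x_2(x_1+x_2+x_3)\le x_2$, i.e.\ (for $x_2>0$) $2(x_1+x_2+x_3)\le 1$.

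The case you flag as delicate cannot, however, be closed, because the ``only if'' direction is false for (iii) as stated. On $x_1+3x_3=1$ we have $x_1+x_3=1-2x_3<\tfrac12$ whenever $x_3>\tfrac14$, so on that sub-arc $2(x_1+x_2+x_3)\le1$ and all the remaining corner bounds allow $x_2>0$. For instance $\xvec=(0.1,\,0.05,\,0.3)$: the $\{1,3\}$-corner $(0.1,0,0.3)$ has value $(0.1+0.9)\log2=\log2$, the full corner has value $\tfrac{0.445}{0.45}\log2<\log2$, and the other corners are smaller still, so $w(\xvec)=\log 2$ with unique minimal-norm maximiser $\zvec=(0.1,0,0.3)$; since $q_{11}-2q_{13}+q_{33}=(1-4+3)\log2=0\ge0$, Lemma~\ref{L:extended}(c) yields $w_*(\xvec)=w(\xvec)=\log2$. (One can check this directly without the lemma: let $\hat Q$ be $Q$ with row and column~$2$ zeroed; any decomposition gives $\sum_i w(\xvec^{(i)})\ge\hat w_*(\xvec)$, and $\hat Q$ reduces to the $2\times2$ block on $\{1,3\}$, which is pseudodefinite, so $\hat w_*(\xvec)=\log2$.) Yet $x_2=0.05>0$, so $\xvec$ lies on none of (i)--(iv). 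The paper's own ``only if'' argument has exactly this gap: it asserts that the four pair and singleton supports for $\zvec$ ``immediately give'' (i)--(iv), but $z_2=0$ does not force $x_2=0$ in the $\{1,3\}$ case, and the full-corner inequality is not checked. Your suspicion about the $x_1+3x_3=1$ locus was therefore exactly right. Curve (iii) has to be replaced by the two-dimensional region $\{x_1+3x_3=1,\ x_1+x_2+x_3\le\tfrac12\}$ (intersected with the remaining pair-corner inequalities), and the subsequent reduction used to compute $w_*(\tfrac13\,\one)$ then needs to be re-justified.
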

\begin{proof}
	First, we show that any point $\xvec$ satisfying at least one of conditions (i)--(iv) is indeed such that $w_*(\xvec)=w(\xvec) = \log 2$. Using Lemma \ref{L:extended}(b), it is straightforward to check the following: 
	\begin{itemize}
			\item[-] If $\xvec$ satisfies (i) then $(0,0,\frac13)^T \in M_{\xvec}$;
			\item[-] If $\xvec$ satisfies (ii) then $(0,x_2,x_3)^T \in M_{\xvec}$;
			\item[-] If $\xvec$ satisfies (iii) then $(x_1,0,x_3)^T \in M_{\xvec}$;
			\item[-] If $\xvec$ satisfies (iv) then $(x_1,x_2,0)^T \in M_{\xvec}$.
 		\end{itemize} 
 	Also we get that $w(\xvec) = \log 2$ in all cases.
	Observe  that all principal submatrices  $\hat{Q}$ of $Q$ except $Q$ itself satisfy the pseudodefinite property 
$
	\yvec^T \hat{Q} \yvec \geq 0
$
	for all $\yvec$ with zero sum of components. Then, using  Lemma \ref{L:extended}(c), we justify that $w_*(\xvec) = w(\xvec)$.
	
	Next, assume that $w_*(\xvec) = w(\xvec)=\log 2$. From Lemma \ref{L:extended}(c), we know that  there is $\zvec\in \pReals^3$ with 
	$z_i \in \{0,x_i\}$ such that 
	$
			\dfrac{\zvec^TQ\zvec}{\|\zvec\|} =  \log 2
	$
	and non-zero components  correspond to a principal submatrix  $\hat{Q}$ of $Q$ satisfying the pseudodefinite property.  Note that $\zvec$ has at least one zero component $(2,-1,-1) Q (2,-1,-1)^T =   -2 \log 2 <0$. It remains to consider six possible cases specifying non-zero components of $\zvec$ (at least one component must be zero and at least one component must be non-zero). Four of them immediately give 
	(i)--(iv). It remains to check $\zvec = (x_1,0,0)^T$ and $\zvec = (0,x_2,0)^T$. 
	 If $\zvec = (x_1,0,0)^T$ then   
	\[
		x_1 = 1, \qquad \dfrac{x_1^2 +2x_1 x_2 + x_2^2}{x_1 +x_2} \leq 1, \qquad 
		\dfrac{x_1^2 +4x_1 x_3 + 3x_3^2}{x_1 +x_3} \leq 1.
	\]
	These conditions  imply that $\xvec = (1,0,0)^T$. This point  is already covered by (iii). Similarly,  If $\zvec = (0,x_2,0)^T$ then 
	\[
		x_2 = \dfrac12, \qquad \dfrac{x_1^2 +2x_1 x_2 + x_2^2}{x_1 +x_2} \leq 1, \qquad 
		\dfrac{2x_2^2 +2x_1 x_3 + 3x_3^2}{x_2 +x_3} \leq 1.
	\]
	These conditions  imply that $\xvec = (0,\frac12,0)^T$. This point  is already covered by (iv).
	\end{proof}

Note that Lemma \ref{l:x123} classifies all $\xvec$ such that $w_*(\xvec) = w(\xvec)$ because we can always scale $\xvec$ so $w(\xvec) = \log 2$.
 It is easy to compute that if $\xvec$ has $x_2 =0$ then 
\begin{equation}\label{eq:split}
w(\xvec) =  x_1 \log 2 + 3x_3 \log 2  = w( (x_1,0,0))^T +  w((0,0,x_3)^T).
\end{equation}
Also, if $x_1 =x_1' =0$ or $x_3 =x_3'=0$
then, using the pseudodefinite property of Theorem  \ref{l:Qmatrix}(c) for a $2\times 2$ principal submatrix $\hat{Q}$ of $Q$ corresponding to non-zero components of $\xvec$, we find that 
\begin{equation}\label{eq:combine}
w(\xvec) + w(\xvec') \geq w(\xvec + \xvec').
\end{equation}

Starting from vectors $\xvec^{(1)},\xvec^{(2)},\xvec^{(3)}$ in \eqref{ex2}, if any of them has both non-zero first and third component, then, by Lemma \ref{l:x123} it must have zero second component so
we can split it using \eqref{eq:split}. Then, using \eqref{eq:combine}  we can combine all vectors with the same support. Thus, we get that 
\begin{align*}
w_*\left(\dfrac13 \cdot \one\right)  &= \dfrac{1}{3} \min_{r \in [0,1]}\left(  w((1,r,0)^T) + w((0,1-r,1)^T) \right)
\\ 
&= \dfrac{\log 2}{3} \min_{r \in [0,1]} \left(\dfrac{1 + 2r + 2r^2 }{1+r} + \max\left\{3, \dfrac{3 + 2 (1-r)+ 2(1-r)^2 }{2-r}\right\}\right).
\end{align*}
First, we consider the case $r \geq \frac12$, for which 
$
	   \dfrac{3 + 2 (1-r)+ 2(1-r)^2 }{2-r} \leq 3 .
$
Then, we have that  
\[
	\dfrac{1 + 2r + 2r^2 }{1+r} + \max\left\{3, \dfrac{3 + 2 (1-r)+ 2(1-r)^2 }{2-r}\right\} 
	=\dfrac{1 + 2r + 2r^2 }{1+r} + 3 
	\geq \dfrac{14}{3}.
\]
For $r \leq \frac12$, observe that 
\[
\dfrac{1 + 2r + 2r^2 }{1+r} + \max\left\{3, \dfrac{3 + 2 (1-r)+ 2(1-r)^2 }{2-r}\right\}
= 2 + \dfrac{1}{1+r} + \dfrac{3}{2-r}.
\]
The minimum of this function is achieved when $3 (1+r)^2 =  (2-r)^2$ which gives 
$r = \frac{3\sqrt 3 -5}{2}<1/2$.  At this point, we get
\[
2 + \dfrac{1}{1+r} + \dfrac{3}{2-r} = \dfrac{10+2\sqrt 3}{3} < \dfrac{14}{3}.
\]
Recalling \eqref{ex1}, we conclude that  whp
\[
 \chi(\G) = 	(1+o(1))w_*\left(\dfrac13 \cdot \one\right) \dfrac{n}{2\log n}
 =   \left(\dfrac{(5+\sqrt 3)\log 2}{9} + o(1)\right) \dfrac{n}{\log n}.
\]

\section{From block graphons to general graphons}\label{S:general}
In this section we prove Theorems \ref{T:New1}, \ref{C:main2},  \ref{Thm:con-inc}, and \ref{T:block-regulated}.
Before proceeding to the proofs,  we first study in  Section \ref{S:stability}  how much  the quantities  $\varphi(\mu, W)$, $\varphi_k(W)$, $\varphi_*(W)$ defined in Section~\ref{S:dense} 
can change with respect to perturbations of the graphon $W$ in $\calL^\infty$-norm and $\calL^2$-norm.

\subsection{Stability estimates for $\varphi(\mu, W)$, $\varphi_k(W)$,  and $\varphi_*(W)$ }\label{S:stability}


For a measurable function $F: [0,1]^2 \rightarrow [-1,1]$ with respect to $\mu \in \calP([0,1])$, let 
\begin{align*}
	\|F\|_{\calL^{2}(\mu\times \mu)} := \left(\int_{[0,1]^2} |F(x,y)|^2 d\mu(x)\, d\mu(y)\right)^{1/2}.
\end{align*}  
%

\begin{thm}\label{T:norm} 
	Let $W$ and $W'$ be graphons such that
	\[
	\sup_{x,y \in [0,1]}W (x,y) \leq 1-\eps \quad \text{ and } \quad \sup_{x,y \in [0,1]}W' (x,y) \leq 1-\eps
	\]
	for some  $\eps\in (0,1)$.  For $\delta \in [0,1|$, let   
	$
		S_{\delta} \subseteq [0,1]
	$
	be such that 
	\[
			\sup_{x,y \notin S_\delta}|W (x,y) - W'(x,y)| \leq \delta.
	\]
	Then, the following hold.
	\begin{itemize}
		\item[(a)]  For any $\mu \in \calP([0,1])$,  we have
		\[
		\left|\varphi(\mu,W) - \varphi(\mu,W')\right| \leq \eps^{-1}   \min\left\{ \delta + 2\mu(S_{\delta}),  \|W-W'\|_{\calL^2(\mu \times \mu)}\right\}.
		\]
		\item[(b)] For any $k \in \Naturals$, we have
		\[
		|\varphi_k(W) - \varphi_k(W')| \leq \eps^{-1}\sqrt{k}\, \|W-W'\|_{\calL^2(\lambda \times \lambda)}.
		\]
		\item[(c)] For any $k \in \Naturals$, we have
		\[ |\varphi_k(W) - \varphi_k(W')|\leq \eps^{-1} \left(\delta+ 2\lambda(S_{\delta})\right),
		\]
			and
		\[ 
		  |\varphi_*(W) - \varphi_*(W')| \leq \eps^{-1} 
		\left(\delta+ 2\lambda(S_{\delta})\right).
		\]
	\end{itemize}
\end{thm}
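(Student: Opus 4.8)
The plan is to reduce all three parts to one elementary pointwise inequality and then push it through the definitions \eqref{def:var}, \eqref{def:var-k}, \eqref{def:var-star}, exploiting only that $\varphi(\mu,W)$ is a supremum of averages while $\varphi_k(W),\varphi_*(W)$ are infima of convex combinations of such suprema, together with the stability of $\sup$ and $\inf$ under uniform perturbations. \textbf{Step 1 (pointwise bound).} Put $g_W(x,y):=\log\frac{1}{1-W(x,y)}$ and define $g_{W'}$ similarly. Since $t\mapsto\log\frac{1}{1-t}$ has derivative $\frac{1}{1-t}\le\eps^{-1}$ on $[0,1-\eps]$, the mean value theorem gives the pointwise estimate $|g_W-g_{W'}|\le\eps^{-1}|W-W'|$; in particular $|g_W-g_{W'}|\le\eps^{-1}$ everywhere, and $\|g_W-g_{W'}\|_{\calL^2(\mu\times\mu)}\le\eps^{-1}\|W-W'\|_{\calL^2(\mu\times\mu)}$ for every $\mu$.

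\textbf{Step 2 (part (a)).} Fix $\mu$ and a measurable $S$ with $\mu(S)>0$. Since an average of a difference controls the difference of the averages,
\[
\biggl|\frac{1}{\mu(S)}\int_S\!\int_S g_W\,d\mu\,d\mu-\frac{1}{\mu(S)}\int_S\!\int_S g_{W'}\,d\mu\,d\mu\biggr|\le\frac{1}{\mu(S)}\int_S\!\int_S|g_W-g_{W'}|\,d\mu\,d\mu.
\]
For the first term in the $\min$, decompose $S\times S$ into $(S\setminus S_\delta)\times(S\setminus S_\delta)$, where $|W-W'|\le\delta$, and the remaining set $\{(x,y)\in S\times S:x\in S_\delta\text{ or }y\in S_\delta\}$, whose $\mu\times\mu$-mass is at most $2\mu(S)\mu(S_\delta)$ and on which $|g_W-g_{W'}|\le\eps^{-1}$; this bounds the right-hand side by $\eps^{-1}(\delta\mu(S)+2\mu(S_\delta))\le\eps^{-1}(\delta+2\mu(S_\delta))$. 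For the second term, Cauchy--Schwarz on $S\times S$ (which has $\mu\times\mu$-mass $\mu(S)^2$) gives $\frac{1}{\mu(S)}\int_S\int_S|g_W-g_{W'}|\le\|g_W-g_{W'}\|_{\calL^2(\mu\times\mu)}\le\eps^{-1}\|W-W'\|_{\calL^2(\mu\times\mu)}$. Hence for every $S$ the difference of the two averages is at most $\eps^{-1}$ times the claimed minimum; taking the supremum over $S$ and using $|\sup_S a_S-\sup_S b_S|\le\sup_S|a_S-b_S|$ yields (a).

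\textbf{Step 3 (parts (b) and (c)).} For any $(\calM,\alphavec)\in\calC_\lambda$ the triangle inequality gives $\bigl|\sum_{\mu}\alpha_\mu\varphi(\mu,W)-\sum_{\mu}\alpha_\mu\varphi(\mu,W')\bigr|\le\sum_{\mu}\alpha_\mu|\varphi(\mu,W)-\varphi(\mu,W')|$, and since $|\inf a-\inf b|\le\sup|a-b|$ over the index sets of \eqref{def:var-k} and \eqref{def:var-star}, it suffices to bound this sum uniformly over representations. For (c), substitute the first bound of (a) and use $\sum_\mu\alpha_\mu=1$ and $\sum_\mu\alpha_\mu\mu(S_\delta)=\lambda(S_\delta)$ (from $\sum_\mu\alpha_\mu\mu=\lambda$, i.e.\ \eqref{def:convex}) to get $\sum_\mu\alpha_\mu|\varphi(\mu,W)-\varphi(\mu,W')|\le\eps^{-1}(\delta+2\lambda(S_\delta))$ for every representation, whence the bounds for both $\varphi_k$ and $\varphi_*$. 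For (b), substitute the second bound of (a), so that $\sum_\mu\alpha_\mu|\varphi(\mu,W)-\varphi(\mu,W')|\le\eps^{-1}\sum_\mu\alpha_\mu\|W-W'\|_{\calL^2(\mu\times\mu)}$; with at most $k$ summands, Cauchy--Schwarz bounds this by $\eps^{-1}\sqrt{k}\bigl(\sum_\mu\alpha_\mu^2\|W-W'\|_{\calL^2(\mu\times\mu)}^2\bigr)^{1/2}$, and expanding $\lambda\times\lambda=\sum_{\mu,\nu}\alpha_\mu\alpha_\nu\,\mu\times\nu$ and discarding the (nonnegative) off-diagonal terms shows $\sum_\mu\alpha_\mu^2\|W-W'\|_{\calL^2(\mu\times\mu)}^2\le\|W-W'\|_{\calL^2(\lambda\times\lambda)}^2$; taking the infimum over representations with $|\calM|\le k$ gives (b).

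\textbf{Main obstacle.} The estimates are not deep, but the one place requiring care is the $\sqrt k$ loss in (b): naively bounding $\alpha_\mu^2$ by $\alpha_\mu$ goes the wrong way, so one must instead combine the $k$-term Cauchy--Schwarz with the diagonal-only lower bound $\|W-W'\|_{\calL^2(\lambda\times\lambda)}^2\ge\sum_\mu\alpha_\mu^2\|W-W'\|_{\calL^2(\mu\times\mu)}^2$. Everything else is bookkeeping around the identity $\sum_\mu\alpha_\mu\mu=\lambda$ and the stability of $\sup$ and $\inf$ under uniform perturbation.
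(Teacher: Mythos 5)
Your proof is correct and follows essentially the same route as the paper: the mean value theorem bound $|g_W-g_{W'}|\le\eps^{-1}|W-W'|$, the split over $S_\delta$ and the Cauchy--Schwarz bound for part (a), and the reduction of (b) and (c) to (a) via the stability of infima; the step you isolate as the "main obstacle" is precisely what the paper does, phrased as AM--QM instead of Cauchy--Schwarz with a constant vector, which is the same inequality.
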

\begin{proof}
	%
	
	For part (a), without loss of the generality, we may assume that  $\varphi(\mu, W) \geq \varphi(\mu, W')$.  By definition \eqref{def:var} and properties of supremum, we get 
	\begin{align*}
		\varphi(\mu,W) - \varphi(\mu,W') 
		&=
		\sup_{S\subseteq [0,1]: \mu(S)>0}  
		\dfrac{1}{\mu(S)} \int_{S\times S}   \log \left(\dfrac{1}{1-W(x,y)}\right)d\mu(x) d\mu(y)
		\\
		&\hspace{15mm}-	\sup_{S'\subseteq [0,1]: \mu(S')>0}  
		\dfrac{1}{\mu(S')} \int_{S'\times S'}   \log \left(\dfrac{1}{1-W'(x,y)}\right)d\mu(x) d\mu(y)  
		\\
		&\leq    
		\sup_{S\subseteq [0,1]: \mu(S)>0}  
		\dfrac{1}{\mu(S)} \int_{S\times S}   \left|\log \left(\dfrac{1}{1-W(x,y)}\right)  - \log \left(\dfrac{1}{1-W'(x,y)}\right)\right|   d\mu(x) d\mu(y). 
	\end{align*}
	Observe  by the Mean Value theorem that, for all $x,y\in [0,1]$, 
	\begin{equation*} 
		\left|\log \left(\dfrac{1}{1-W(x,y)}\right)  - \log \left(\dfrac{1}{1-W'(x,y)}\right)\right| \leq \eps^{-1} |W(x,y) - W'(x,y)|.  	   
	\end{equation*}	
Bounding $|W(x,y) - W'(x,y)| \leq \delta$ for $x,y \in S\setminus S_{\delta}$
and  $|W(x,y) - W'(x,y)| \leq 1$ otherwise, we get that
\[
	\int_{S\times S}  \left|W(x,y)  - W'(x,y)\right|d\mu(x) d\mu(y)  
	\leq \delta \mu^2(S)   +  2 \mu(S\cap S_{\delta}) \mu(S)
	\leq (\delta + 2 \mu(S_\delta)) \mu(S).
\]
Alternatively,	using  the Cauchy-Schwartz inequality, we estimate 
	\begin{align*}
		\int_{S\times S}  \left|W(x,y)  - W'(x,y)\right|d\mu(x) d\mu(y)  &\leq
		\\\left(\int_{S\times S}  1 \, d\mu(x) d\mu(y)\right)^{1/2} &\left(\int_{S\times S}  \left|W(x,y)  - W'(x,y)\right|^2 d\mu(x) d\mu(y)\right)^{1/2}
		\\ &\leq 
		\mu(S) \|W-W'\|_{\calL^2(\mu \times \mu)}.
	\end{align*}
	Combining the above inequalities, part (a) follows.

	For part (b), without loss of the generality, we may assume that  $\varphi_k(W) \geq \varphi_k(W')$.  Consider any representation $(\calM,\alphavec)$ 	of the uniform measure $\lambda$ as a finite  convex  combination $ \sum_{\mu \in \calM} \alpha_{\mu} \mu$ with $|\calM| = k$. Using  the inequality between the arithmetic mean and the quadratic mean (AM-QM), we get that 
	\begin{align*}
		\|W-W'\|_{\calL^2(\lambda \times \lambda)}^2  &= \int_{[0,1]^2} (W(x,y) - W'(x,y))^2 dx dy\\
		&=
		\sum_{\mu,\mu' \in \calM} \alpha_{\mu} \alpha_{\mu'} \int_{[0,1]^2}
		(W(x,y) - W'(x,y))^2 d \mu(x) d\mu'(y)
		\\
		&\geq \sum_{\mu\in \calM} \alpha_{\mu}^2  \|W-W'\|_{\calL^2(\mu \times \mu)}^2
		\stackrel{\text{AM-QM}}{\geq }
		k^{-1}\left(\sum_{\mu\in \calM} \alpha_{\mu}  \|W-W'\|_{\calL^2(\mu \times \mu)} \right)^2.
	\end{align*}
	Then, applying part (a), we find that
	\begin{align*}
		\varphi_k(W) &\leq \sum_{\mu \in M} \alpha_\mu \varphi(\mu,W)  \\&\leq \sum_{\mu \in M} \alpha_\mu \varphi(\mu,W')+ 
		\sum_{\mu \in M} \alpha_{\mu} \eps^{-1} \|W-W'\|_{\calL^2(\mu \times \mu)} 
		\\ &\leq\alpha_\mu \varphi(\mu,W')+   \eps^{-1}\sqrt{k} \|W-W'\|_{\calL^2(\lambda \times \lambda)}.
	\end{align*}
	Taking the infimum over all such representations $(\calM,\alphavec)$, we prove part (b).
	
	Finally, for part (c),  we may assume again that  $\varphi_k(W) \geq \varphi_k(W')$.
	Consider any representation $(\calM,\alphavec)$ 	of the uniform measure $\lambda$ as a  finite convex  combination $ \sum_{\mu \in \calM} \alpha_{\mu} \mu$
	with $|\calM|\leq k$.  
	 From part (a), we get that
	\begin{align*}
	|\varphi(\mu,W) - \varphi(\mu,W)| 
	\leq  \eps^{-1}  \left(\delta  + 2 \mu(S_{\delta})\right).
	\end{align*}
	Then, we get that 
	\begin{align*}
		\varphi_k(W) \leq 	\sum_{\mu \in \calM} \alpha_{\mu}\varphi (\mu,W)
		&\leq	\sum_{\mu \in \calM} \alpha_{\mu}\left(\varphi (\mu,W') + \eps^{-1}(\delta + 2\mu(S_\delta))\right)
		\\ &=
		\sum_{\mu \in \calM} \alpha_{\mu}\varphi(\mu,W') +  \eps^{-1}(\delta + 2\lambda(S_{\delta})).
	\end{align*}
	Taking the infimum over all representations $(\calM,\alphavec) \in \calC_{\lambda}$
	with $|\calM| \leq k$, we   complete the proof of the first bound. The second bound  follows by taking limit as $k \rightarrow \infty$.
\end{proof}


\subsection{Proof of Theorem \ref{T:New1}}\label{S:New1}

For $k \in \Naturals$, let
\[
W_k^+(x,y) := \sup_{x', y'} W(x,y),
\]
where the supremum is over $x', y' \in [0,1]$ such that $\lfloor kx' \rfloor = \lfloor kx \rfloor$.
That is,  $W_k^+$ is a $k$-block graphon, in which blocks correspond to the partition of $[0,1]$ into $k$ intervals of the same size and  values equal to the suprema of $W$ over blocks.

Let $\eps := 1- \sup_{x,y \in [0,1]} W(x,y).$
Take $k = k(n)$ to be slowly growing.  
Then, by assumptions, 
\begin{equation*}
	W
	\leq W_k^+ \leq 1-\eps 
	\qquad  \text{and} \qquad	\|W - W_k^+\|_{\calL^2(\lambda\times \lambda)} \rightarrow 0. 
\end{equation*}
Since $W \leq W_k^+$, 
we can find  a random graph  $\G_k^+$ such that
$\G_k^+ \sim \calG(n,  W_k^+)$ 
and $\G \subseteq \G_k^+$. 
Applying Theorem \ref{T:sequences1}, we get that  whp
\[
\chi(\G) \leq \chi(\G_k^+) = \left(\varphi_*(W_k) +o(1)\right) \dfrac{n}{2\log n}
\leq \left(\varphi(W_k^+) +o(1)\right) \dfrac{n}{2\log n}.
\]
Using Theorem \ref{T:norm}(a) with $\mu$ equal to the uniform measure, we  get that 
$\varphi(W_k^+) \rightarrow \varphi(W)$,
completing the proof.

\subsection{Proof of Theorem \ref{C:main2}}\label{S:main2}
Consider any representation $(\calM,\alphavec) \in \calC_{\lambda}$ 	of the uniform measure $\lambda$ as a  finite convex  combination $ \sum_{\mu \in \calM} \alpha_{\mu} \mu$.  
Recalling \eqref{disj_union} and 
using different colours for each $\G_{\mu}$, we have an upper bound  
\begin{equation} \label{chi_union}
	\chi(\G) \leq \sum_{\mu \in \calM}    \chi(\G_{\mu}).
\end{equation}
Next, we show that how Theorem \ref{T:New1} can be applied to estimate $\chi(\G_\mu)$.

We note  that 
\[
\alpha_{\mu} \mu(S) \leq   \sum_{\nu \in \calM} \alpha_{\nu} \nu(S) =  \lambda(S),
\]
therefore $\mu$ is  absolutely continuous with respect to $\lambda$.
Using the Radon-Nikodym theorem, there is a measurable  function  $f_{\mu}: [0,1] \rightarrow \pReals$ such that, for all measurable $S \subset [0,1]$,
\[
\mu(S) = \int_{S} f_{\mu} (x) dx.
\]
Note that $\sum_{\mu \in \calM} \alpha_{\mu} f_{\mu} \equiv 1$ and therefore
$f_{\mu} \in \calL^{\infty}[0,1]$.
Applying the probability integral transform $T_{\mu}$ for $\mu$ (the right inverse of the non-decreasing  distribution function $F_\mu(t):=\int_{x=0}^t f_{\mu}(x)dx$), we find  a transformation $T_{\mu}: [0,1]\rightarrow [0,1]$ such that 
\begin{equation}\label{Transform_eq}
	\lambda(T_{\mu}[S]) = \mu(S)
\end{equation}
for any measurable $S$. Define graphon $W_{\mu}$ by
\[
W_\mu(x,y) \equiv W(T_{\mu}[x], T_\mu[y]) \qquad \text{ for all $x,y\in [0,1]$}.
\]
By definition of $\G_{\mu}$, we find that
$
\G_{\mu} \sim \G(n_{\mu}, W_{\mu}), 
$
where   $n_\mu$ is  distributed   according to  $\operatorname{Bin}(n,\alpha_{\mu})$. Using Chernoff's bound, we get that  whp
\begin{equation}\label{nmu-con}
	n_{\mu} = (1+o(1))\alpha_{\mu} n
\end{equation}
for all $\mu \in \calM$. 
Applying Theorem \ref{T:New1}, we obtain that whp
\[	
\chi(\G_{\mu}) \leq \left(\varphi(W_{\mu}) + o(1)\right) \dfrac{n_{\mu}}{ 2 \log n_{\mu}}
=  \left(\varphi(W_{\mu}) + o(1)\right) \dfrac{ \alpha_{\mu}n }{ 2 \log n}.
\]

Finally, recalling definitions 
\eqref{def:var_uni} and \eqref{def:var}, we obtain from  \eqref{Transform_eq} that 
\[
\varphi(W_{\mu}) = \varphi(\mu,W).  
\]
Then, substituting the above bound for $\chi(\G_{\mu})$ into  \eqref{chi_union}, we get  whp
\[
\chi(\G) \leq \sum_{\mu \in \calM}\left(\varphi(W_{\mu}) + o(1)\right) \dfrac{ \alpha_{\mu}n }{ 2 \log n} =    \left(\sum_{\mu \in \calM} \alpha_{\mu} \varphi(\mu, W)  + o(1)
\right) \dfrac{n}{2\log n}.
\]


\subsection{Proof of Theorem \ref{T:block-regulated}}\label{S:b-regulated}

Let $\eps := 1- \sup_{x,y \in [0,1]} W(x,y).$ 
Since the graphon $W$ is  block-regulated,
for any $\delta>0$,  
there exists a block graphon $W^{\delta}$ such that 
\[	
\sup\limits_{x,y \in [0,1]} |W(x,y) - W^{\delta}(x,y)| \leq \delta.
\]
Then,  there exist two block graphons 
$W^{\delta}_L$ and $W^{\delta}_U$ such that
\begin{equation}\label{eq:sand1}
	W^{\delta}_L(x,y) \leq W(x,y) \leq W^{\delta}_U(x,y) \leq 1-\eps, \qquad \text{ for all } x,y\in [0,1],
\end{equation} 
and 
\begin{equation}\label{sand-diff}
	\esssup\limits_{x,y \in [0,1]} |W^{\delta}_U(x,y) - W^{\delta}_L(x,y)| \leq 2\delta.
\end{equation}
Indeed, one can take 
$W^{\delta}_L:= \max\{W^{\delta} - \delta, 0\}$ and $W^{\delta}_U:= \min\{W^{\delta} + \delta, 1-\eps\}$.
Using \eqref{sand-diff} and Theorem \ref{T:norm}(c), we get that 
\begin{equation}\label{sand-phi}
	\varphi_*(W) + 2 \delta/ \eps \geq 
	\varphi_{*}(W_{U}^{\delta}) \geq \varphi_{*}(W_{L}^{\delta})  \geq \varphi_*(W) -  2 \delta/ \eps.
\end{equation}

Using \eqref{eq:sand1}, we can find   random graphs $\G_L^{\delta}(n)$ and $\G_U^{\delta}(n)$
such that   $ \G_L^{\delta}(n) \sim	\M(n, W^{\delta}_L)$,   $\G_U^{\delta}(n)\sim	\M(n, W^{\delta}_U)$, and   	$\G_L^{\delta}(n)\subseteq  \G \subseteq \G_U^{\delta}(n)$.
Therefore,
\begin{equation*}
	\chi(\G_L^{\delta}(n)) \leq \chi(\G) \leq  \chi(\G_U^{\delta}(n)).
\end{equation*}
By \eqref{eq:sand1}, we get that  
$W_{L}^{\delta}$ and  $W_{U}^{\delta}$ 
satisfy the assumptions of   
Theorem~\ref{T:sequences1}. Thus,  whp as $n \rightarrow \infty$,
\begin{equation*} 
	\begin{aligned}
		\chi(\G_L^{\delta}(n)) =  \left(\varphi_*(W_L^{\delta}) + o(1)\right) \dfrac{n}{2\log n},\\
		\chi(\G_U^{\delta}(n)) =  \left(\varphi_*(W_U^{\delta}) + o(1)\right) \dfrac{n}{2\log n}.  
	\end{aligned}
\end{equation*}
Since we can take $\delta$ arbitrarily small, combining \eqref{sand-phi} with the sandwich arguments given above completes the proof.

\subsection{Proof of Theorem \ref{Thm:con-inc}}\label{S:b-cont}

For simplicity, we only consider for the case when $W$ is increasing or Lipschitz. The proof generalises easily for block-increasing and block-Lipschitz  graphons  by using the same argument within each interval forming blocks.

Similarly to the proof of Theorem \ref{T:New1}, for a positive integer $k$,
consider $k$-block graphons 
$W_k^+$ and $W_k^-$, in which blocks correspond to the  partition of $[0,1]$ into $k$ intervals of the same size and  values equal to the suprema/infima of $W$ over blocks 
$[\frac{i-1}{k},\frac{i}{k}) \times [\frac{i-1}{k},\frac{i}{k})$.  
This construction insures that 
\[
	W_k^{-} \leq W \leq W_k^{+}.
\]
Then, by definition \eqref{def:var-k},
\[
	\varphi_k(W_k^{-} ) \leq \varphi_k(W) \leq \varphi_k(W_k^{+} ).
\]
Also, there is a coupling $(\G, \G_k)$ such that
$\G \sim \M(W,n)$, $\G_k \sim \M(W_k^{-},n)$, and $\G_k \subseteq \G$. Then, we get that 
$ \chi(\G) \geq \chi(\G_k)$. 
If  we show that 
\begin{equation}\label{diff_Wpm}
\varphi_k(W_k^{+} ) \leq 	\varphi_k(W_k^{-} )  + O(k^{-1})
\end{equation}
then, 
applying Theorem \ref{T:block} to $W_k^-$ and $W_k^+$, we get the required estimate.  

It remains to show \eqref{diff_Wpm}. For the case when $W$ is Lipschitz, it is straightforward:   we observe  $|W_k^+-W_k^-| = O(k^{-1})$ and  apply Theorem \ref{T:norm}(c) with $S_\delta = \emptyset$. For the case when $W$ is increasing, we  consider graphon  $\hat{W}^-_k$   defined by 
\begin{align*}
	\hat{W}_k^-(x,y) &:= 	 
	\begin{cases}
			W_k^-(x+\frac1k, y +\frac1k), &\text{if $x,y \leq \frac{k-1}{k}$,}\\
			W_k^-(x+\frac1k-1, y +\frac1k-1), & \text{otherwise.}
		\end{cases}
\end{align*}
This graphon is equivalent to $W_k^-$ up to a preserving measure transformation. Therefore, $\varphi_k(\hat{W}_k^-) = \varphi_k(W_k^-)$.
 On the other hand, since $W$ is increasing,  we have
 \[
 \hat{W}_k^- \geq \hat{W}_k^+ := 	  \one_{[0,\frac{k-1}{k}) \times [0,\frac{k-1}{k})} W^+_k.
 \]
Therefore, $\varphi_k(\hat{W}_k^-)  \geq \varphi_k(\hat{W}_k^+) $. Finally, applying
Theorem \ref{T:norm}(c) with $S_\delta = [\frac{k-1}{k},1]$, we get that 
$\varphi_k(  \hat{W}_k^+) =  \varphi_k(W_k^+ ) + O(k^{-1})$. Combining the above, we get
\[
		\varphi_k(W_k^-) = \varphi_k(\hat{W}_k^-)  \geq \varphi_k(  \hat{W}_k^+) 
		 =  \varphi_k(W_k^+ ) + O(k^{-1}).
\]
This completes the proof of Theorem \ref{Thm:con-inc}.

\end{document}